\documentclass[english]{article}

\usepackage[utf8]{inputenc}
\usepackage[T1]{fontenc}

\usepackage{geometry}
\usepackage{babel}
\usepackage{float}
\usepackage{mathtools}
\usepackage{bm}
\usepackage{bbm}
\usepackage{amsmath,amssymb,amsthm}
\usepackage{dsfont}
\usepackage{graphicx}
\usepackage{natbib}
\usepackage{enumitem}
\usepackage{authblk}
\usepackage{color}
\usepackage{booktabs}

\usepackage{qsw}

\usepackage[
  colorlinks,
  linkcolor=blue,
  citecolor=blue,
  urlcolor=blue,
  breaklinks
]{hyperref}
\usepackage{cleveref}

\makeatletter

\usepackage{algorithm}
\usepackage{algpseudocode}
\theoremstyle{plain}
\newtheorem{theorem}{Theorem}[section]
\newtheorem{lemma}{Lemma}[section]
\newtheorem{corollary}{Corollary}[section]

\newtheorem{definition}{Definition}[section]

\newtheorem{proposition}{Proposition}[section]

\theoremstyle{definition}
\newtheorem{remark}{Remark}[section]

\newtheorem{example}{Example}[section]

\newcommand{\hinv}{h_+^{\leftarrow}}

\newcommand{\Toff}[2]{T(#1,#2)}

\newcommand{\hupup}{h_+\mathbin{\circ}h_+}                 
\newcommand{\hinvinv}{h_+^{\leftarrow}\mathbin{\circ}h_+^{\leftarrow}}  

\newcommand{\mgn}[1]{\tau+\sqrt{\tau\,V(#1)}}

\newcommand{\errsum}[1]{\errone(\cM_{\smlr};#1)+\errtwo(\cM_{\smlr};#1)}

\newif\ifshowcomments
\showcommentstrue      

\definecolor{cm}{RGB}{0,0,200}
\definecolor{ks}{RGB}{160,32,240}

\ifshowcomments
  \newcommand{\cm}[1]{\textcolor{cm}{[CM: #1]}}
  \newcommand{\ks}[1]{\textcolor{ks}{[KS: #1]}}
\else
  \newcommand{\cm}[1]{}
  \newcommand{\ks}[1]{}
\fi

\showcommentsfalse

\DeclareMathOperator{\HyperGeom}{HyperGeom}
\DeclareMathOperator{\Binom}{Binom}

\newcommand{\fnull}{f_0}
\newcommand{\falt}{f_1}

\newcommand{\convclosure}{\overline{\mathrm{conv}}}
\newcommand{\smlr}{\mathcal{S}_{\mathrm{MLR}}}     
\newcommand{\etaND}{\eta_{n,\delta}}               
\newcommand{\tauND}{\tau_{n,\delta}}               
\newcommand{\hup}{h_+}                             
\newcommand{\hdown}{h_-}                           
\newcommand{\fstar}{f^\star}                       
\newcommand{\dTV}{d_{\mathrm{TV}}}                 

\@ifundefined{showcaptionsetup}{}{%
  \PassOptionsToPackage{caption=false}{subfig}}
\usepackage{subfig}

\makeatother

\begin{document}

\allowdisplaybreaks

\title{When Are Trade-Off Functions Testable from Finite Samples?}

\author{Kaining Shi$^{*}$}
\author{Qiaosen Wang$^{*}$}
\author{Cong Ma}
\affil{Department of Statistics, University of Chicago}

\date{\today}

\maketitle

\begingroup
\renewcommand{\thefootnote}{\fnsymbol{footnote}}
\setcounter{footnote}{1}
\footnotetext{These two authors contributed equally.}
\addtocounter{footnote}{-1}
\endgroup

\begin{abstract}
We study finite-sample inference for the trade-off function of two unknown
probability distributions, the function that traces the optimal type~I/type~II
error frontier in binary testing. 
Given samples from distributions \(P\) and
\(Q\), we consider the problem of testing whether their trade-off function lies above a
benchmark curve \(f_0\) or falls below a weaker benchmark \(f_1\). 
Without
structural restrictions, this problem is impossible uniformly over nonparametric classes. We identify a sharp condition under which it becomes
possible. The key structural assumption is that the Neyman--Pearson rejection
regions for \((P,Q)\) are attainable, up to null sets, by a prescribed class
\(\mathcal S\) of measurable sets. Within this exact attainability framework,
finite Vapnik--Chervonenkis dimension of \(\mathcal S\) is both sufficient and
necessary for nontrivial finite-sample testing. We construct a test with
nonasymptotic error guarantees: type~I error control is valid without assuming
attainability, while power holds uniformly over attainable alternatives
satisfying an explicit separation condition.  
By inverting the test, we also obtain simultaneous confidence
bands for the whole trade-off curve. Finally, we study the sharpness and
robustness of the procedure. In the monotone likelihood-ratio model, we derive
local separation rates and prove matching lower bounds up to logarithmic
factors. We also allow approximate, rather than exact, attainability; this
extension yields finite-sample guarantees for univariate log-concave
distributions by approximating their rejection regions with unions of
intervals.
\end{abstract}

\section{Introduction}\label{sec:intro}

How well can two probability distributions be distinguished from finite samples?
This question arises across several areas of modern statistics and machine learning. In privacy auditing~\cite{Auditing3, Auditing2}, one asks whether the output distributions of a randomized mechanism on neighboring datasets are sufficiently similar to satisfy a target privacy guarantee. In tolerant distribution testing~\cite{price2021tolerant}, one asks whether two unknown distributions are close enough under one threshold or far enough apart under another. Although these problems are often studied separately, they share a common statistical object: the \emph{trade-off function}.

For two distributions $P$ and $Q$ on a common measurable space, the trade-off function $\Toff{P}{Q}$, introduced by \cite{GDP}, maps each type~I error level to the minimum achievable type~II error when testing $P$ against $Q$. It therefore
gives the full Neyman--Pearson error frontier. Lower bounds on this function
encode indistinguishability guarantees. For example, \(f\)-differential
privacy~\cite{GDP} can be expressed as a uniform lower bound on the trade-off
functions of neighboring output distributions, while total variation distance
corresponds to a particular piecewise-linear benchmark curve. This makes the trade-off function a natural common language for a broad class of inference problems.

In this paper, we study the following composite hypothesis testing problem
\[
H_0:\; \Toff{P}{Q}\succeq \fnull,
\qquad\text{versus}\qquad
H_1:\; \Toff{P}{Q}\not\succeq \falt,
\]
based on independent samples $X_{1:n}\sim P$ and $Y_{1:n}\sim Q$, where $\fnull\succeq \falt$ are benchmark trade-off functions  and
\(f\succeq g\) means \(f(\alpha)\geqslant g(\alpha)\) for every
\(\alpha\in[0,1]\). Under the null, $P$ and $Q$ are at least as hard to distinguish as prescribed by $\fnull$; under the alternative, the trade-off curve falls below the weaker benchmark $\falt$ somewhere. This formulation includes, as special cases, testing privacy guarantees
against a benchmark profile and tolerant testing under total variation.

The basic question is whether such hypotheses can be tested from finite samples. In full generality, the answer is negative. The trade-off function is an infinite-dimensional object, and without structural assumptions on the pair $(P,Q)$, no finite-sample procedure can estimate it accurately enough to support nontrivial uniform testing. At the other extreme, if $(P,Q)$ belongs to a known parametric family, then the problem reduces to classical finite-dimensional inference. The interesting regime lies between these extremes: one seeks structural assumptions that are weak enough to cover broad nonparametric families, but strong enough to make the trade-off curve statistically identifiable.

Our starting point is the Neyman--Pearson lemma. Writing \(\mu=(P+Q)/2\), \(p=dP/d\mu\), and \(q=dQ/d\mu\), the trade-off function is generated by the likelihood-ratio level sets
\[
\{x:q(x)>\lambda p(x)\}, \qquad \lambda \in [0, \infty].
\]
These are the rejection regions that trace out the optimal type~I/type~II frontier. Thus, finite-sample inference for the trade-off curve is governed by the geometry of these witness sets. If they belong to a structured family whose probabilities can be estimated uniformly, then the trade-off curve itself becomes testable.

To formalize this idea, we introduce \(\mathcal S\)-attainability as a modeling language for classes of distribution pairs. Given a set family \(\mathcal S\), a pair \((P,Q)\) is \(\mathcal S\)-attainable if every Neyman--Pearson likelihood-ratio level set can be represented, up to null sets, by a member of \(\mathcal S\). The class \(\mathcal S\) describes the geometry of the most
informative tests between \(P\) and \(Q\). For example, in a monotone
likelihood-ratio model on the real line, the relevant level sets are
half-lines.

This formulation separates the statistical problem into two components. The
first is a structural component: violations of the benchmark must be witnessed
by sets in \(\mathcal S\). The second is a complexity component: probabilities
of sets in \(\mathcal S\) must be estimable uniformly from samples. Our main
result shows that, within the exact \(\mathcal S\)-attainability framework,
finite Vapnik--Chervonenkis (VC) dimension is the threshold for finite-sample
testability. If \(\mathrm{VC}(\mathcal S)<\infty\), we construct a
nonasymptotic test with uniform guarantees. If
\(\mathrm{VC}(\mathcal S)=\infty\), no nontrivial finite-sample test exists,
even when the hypotheses are separated by fixed benchmark curves.

The proposed test scans over candidate rejection regions \(S\in\mathcal S\)
and compares the empirical pair \((P_n(S),Q_n(S^c))\) with the benchmark curve
after applying simultaneous confidence corrections. Its validity has an
important asymmetry. Type~I error control is assumption-free: it holds for
arbitrary distribution pairs \((P,Q)\), even if the pair is not
\(\mathcal S\)-attainable. The attainability assumption is needed only for
power, because it ensures that any population-level violation of the benchmark
has a witness inside the search class. This feature is useful in applications
such as privacy auditing, where false discoveries may be more costly than loss
of power under model misspecification.
By inverting the test, we also obtain simultaneous confidence bands for the whole trade-off
curve. 

Finally, we study the sharpness and robustness of the procedure.
We derive
local separation rates showing how the detectable gap between \(f_0\) and
\(f_1\) depends on the local geometry of the benchmark curve. In the monotone
likelihood-ratio model, we prove matching lower bounds up to logarithmic
factors, showing that these local rates are essentially sharp. We also 
relax exact attainability to an approximate version. This gives a
bias--variance trade-off: enlarging \(\mathcal S\) improves approximation of
the optimal rejection regions but increases the uniform estimation error. As an
application, we show that univariate log-concave pairs can be handled by
approximating their likelihood-ratio level sets with unions of intervals.

\paragraph{Paper organization.}
The rest of the paper develops this program in stages. \Cref{sec:formulation} sets up the formal framework and establishes the impossibility of testing without structural assumptions. \Cref{sec:mlr} develops the monotone likelihood ratio case as a warm-up. \Cref{sec:general} introduces \(\mathcal S\)-attainability, proves the necessity of finite VC dimension, presents the general test, and inverts it into simultaneous confidence bands. \Cref{sec:rates} extracts local detection boundaries and establishes matching lower bounds in the MLR model. \Cref{sec:misspecification} extends the theory to approximate attainability and applies it to log-concave distributions on \(\mathbb R\). \Cref{sec:computation} derives polynomial-time algorithms via the dual formulation. \Cref{sec:experiments} reports numerical experiments. \Cref{sec:related-work,sec:discussion} conclude with related work and open questions.

\section{Problem formulation}\label{sec:formulation}

In this section, we set up the formal framework of the paper. We first define the trade-off function and then state the trade-off testing
problem, together with two canonical examples. We then show that, without
further assumptions on the distribution pair, this testing problem is impossible
to solve. 

\subsection{The trade-off function}

Fix a measurable space $(\mathcal{X}, \mathcal{A})$. 
Let $P$ and $Q$ be two probability distributions on this measurable space.
We have the following definition of the trade-off function.

\begin{definition}[Trade-off function, {\cite{GDP}}]\label{def:tradeoff}

The \emph{trade-off function} $\Toff{P}{Q}: [0,1] \to [0,1]$ is defined by
\[
\Toff{P}{Q}(\alpha) \;\coloneqq\; \inf_{\varphi:\, E_P[\varphi] \leqslant \alpha} E_Q[1-\varphi],
\]
where the infimum is over all measurable (randomized) tests $\varphi$.
\end{definition}

\noindent The trade-off function characterizes the optimal type~I/type~II
error frontier in binary hypothesis testing between $P$ and $Q$. 
It interpolates between two extremes:
$\Toff{P}{Q}(\alpha) \equiv 1-\alpha$ when $P=Q$, and $\Toff{P}{Q}(\alpha) \equiv 0$ when $P$ and $Q$ have
disjoint support.

A function $f:[0,1]\to[0,1]$ is a valid trade-off
function if and only if it is convex, continuous, non-increasing, and
satisfies $f(\alpha)\leqslant 1-\alpha$ for all $\alpha\in[0,1]$; see
Proposition~2.2 of \cite{GDP}.

\subsection{The trade-off testing problem}

Given independent samples $X_{1:n} \sim P$ and $Y_{1:n} \sim Q$, we study the
composite hypothesis test
\begin{equation}\label{eq:H}
H_0: \Toff{P}{Q} \succeq \fnull, \qquad \text{versus} \qquad H_1: \Toff{P}{Q} \not\succeq \falt,
\end{equation}
where $\fnull$ and $\falt$ are two benchmark trade-off functions\footnote{Throughout, any trade-off benchmark \(f\) is extended to \([1,\infty)\) by
setting \(f(\alpha)=0\) for \(\alpha>1\).}
satisfying $\fnull \succeq \falt$. Here $f \succeq g$ means
$f(\alpha)\geqslant g(\alpha)$ for all $\alpha\in[0,1]$, and
$f \not\succeq g$ means $f(\alpha_0)<g(\alpha_0)$ for some $\alpha_0 \in [0,1]$.

Under $H_0$, the pair $(P,Q)$ is at least as hard
to distinguish as prescribed by $\fnull$; under $H_1$,  it is strictly  more distinguishable than allowed
by the weaker benchmark $\falt$ at some level $\alpha$.

Let $\cM$ be a set of distribution pairs under consideration. 
For a test $\psi = \psi(X_{1:n}, Y_{1:n})$, we can define the type~I and type~II errors by
\begin{align*}
\errone(\cM;\, \psi) &\coloneqq \sup_{(P,Q) \in \cM:\, \Toff{P}{Q} \succeq \fnull} (P^n \times Q^n)(\psi), \\
\errtwo(\cM;\, \psi) &\coloneqq \sup_{(P,Q) \in \cM:\, \Toff{P}{Q} \not\succeq \falt} (P^n \times Q^n)(1 - \psi).
\end{align*}
When $\cM$ is unrestricted, we omit it and write $\errone(\psi)$ and
$\errtwo(\psi)$ instead. 
We seek tests that simultaneously control $\errone$ and $\errtwo$ uniformly over $\cM$.

\begin{figure}[t]
  \centering
  \begin{minipage}[t]{0.45\textwidth}
    \centering
    \includegraphics[width=\linewidth]{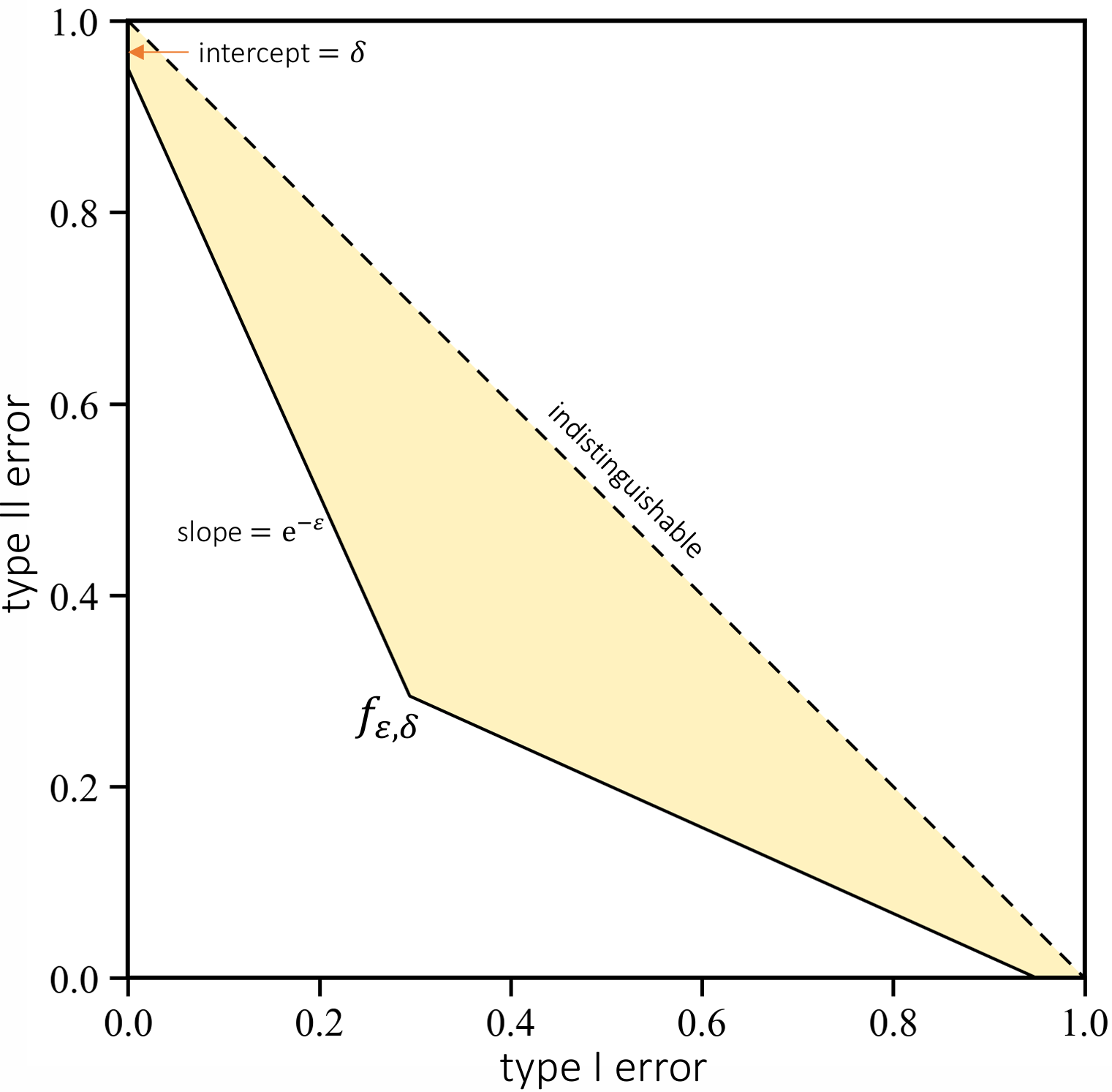}
    \caption{Illustration of testing $(\varepsilon,\delta)$-DP}
    \label{fig:testing-dp}
  \end{minipage}
  \hfill
  \begin{minipage}[t]{0.45\textwidth}
    \centering
    \includegraphics[width=\linewidth]{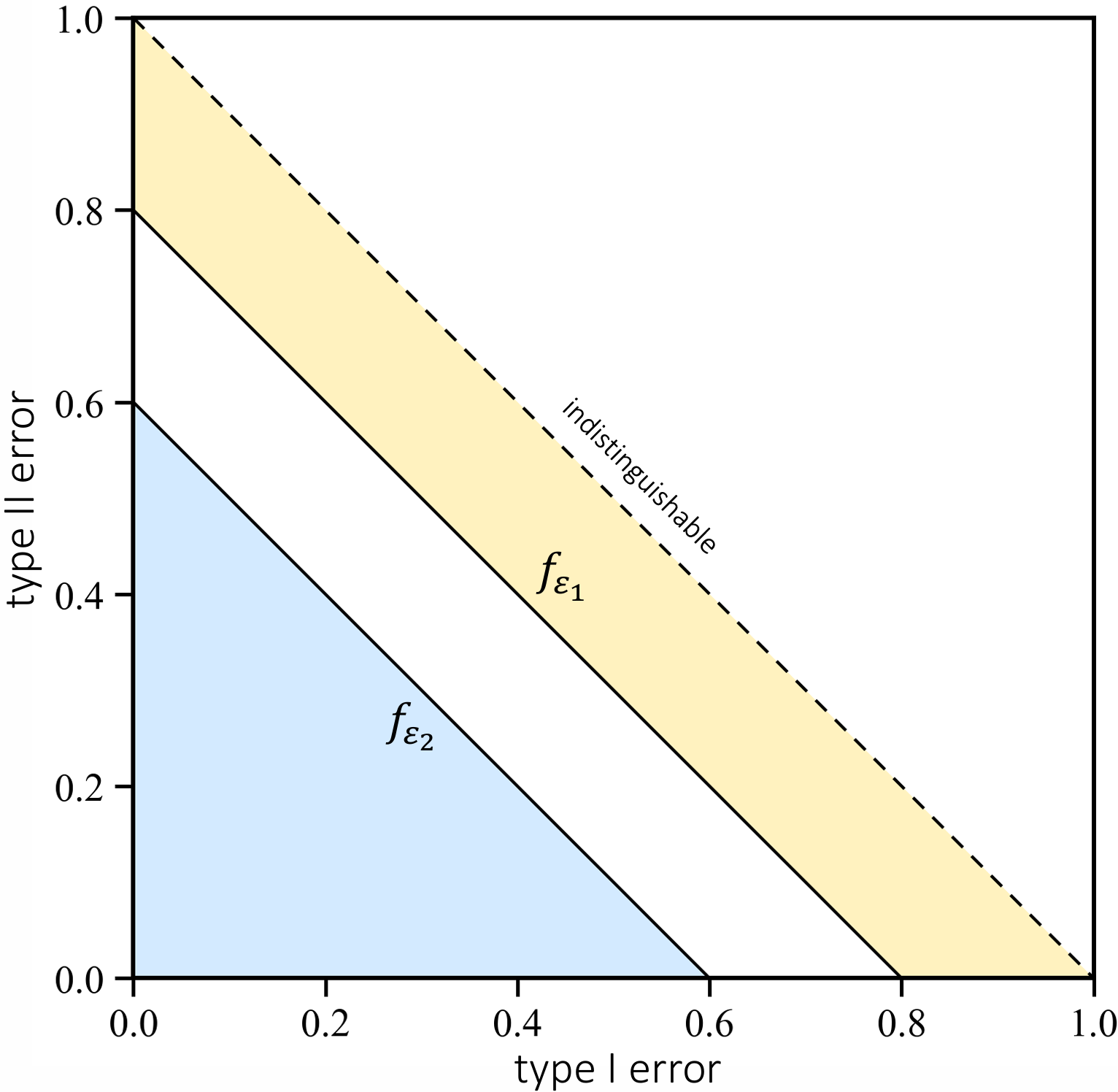}
    \caption{Illustration of tolerant distribution testing}
    \label{fig:tolerant-testing}
  \end{minipage}
\end{figure}

\subsection{Two examples}

We briefly record two canonical examples that fit into the trade-off testing
problem~\eqref{eq:H}.

\paragraph{Example 1: auditing differential privacy.}
In the literature on differential privacy, we say a randomized mechanism $\mathcal A$ is $f$-differentially private~\cite{GDP}
if for every neighboring dataset pair $(D,D')$,
\[
\Toff{\mathcal A(D)}{\mathcal A(D')}\succeq f.
\]
The classical $(\varepsilon,\delta)$-differential privacy guarantee~\cite{dwork2006our} is the
special case
\[
f_{\varepsilon,\delta}(\alpha)
=
\max\Bigl\{
0,\,
1-\delta-e^\varepsilon\alpha,\,
e^{-\varepsilon}(1-\delta-\alpha)
\Bigr\}.
\]
See \Cref{fig:testing-dp} for the graph of $f_{\varepsilon,\delta}$.
Accordingly, fixing a neighboring dataset pair $(D, D')$, testing whether a mechanism satisfies one privacy benchmark or
violates a weaker one is an instance of \eqref{eq:H}.

\paragraph{Example 2: tolerant distribution testing.}

The central problem of tolerant distribution testing~\cite{price2021tolerant} is
\begin{align}\label{eq:tolerant-testing}
H_0:\ d_{\mathrm{TV}}(P,Q)\leqslant \varepsilon_1, \qquad \text{versus} \qquad
H_1:\ d_{\mathrm{TV}}(P,Q)>\varepsilon_2.
\end{align}
This is also a trade-off testing problem in disguise.
To see this, for $\varepsilon\in[0,1]$, define the trade-off function
\[
g_\varepsilon(\alpha)=(1-\varepsilon-\alpha)_+.
\]
See \Cref{fig:tolerant-testing} for an illustration. A standard
characterization of total variation gives
\[
d_{\mathrm{TV}}(P,Q)\leqslant \varepsilon
\quad\Longleftrightarrow\quad
\Toff{P}{Q}\succeq g_\varepsilon.
\]
Hence the tolerant testing problem is exactly the trade-off testing problem
with benchmarks
\[
\fnull=g_{\varepsilon_1},
\qquad
\falt=g_{\varepsilon_2}.
\]
This shows that our formulation extends the usual ``close versus far''
paradigm from total variation to general benchmark curves.

\subsection{Structural conditions are necessary}

If $(P,Q)$ belongs to a known parametric family, such as the Gaussian location family
\[
\cM = \{(N(\theta_P, 1), N(\theta_Q, 1)) : \theta_P, \theta_Q \in \mathbb{R}\},
\]
then the trade-off function is determined by finitely many parameters, and the
testing problem~\eqref{eq:H} reduces to comparing them. At the opposite
extreme, if no structure is imposed, we show that the problem is impossible:

\begin{corollary}\label{cor:impossibility}
Let $\cM$ be the collection of all pairs of distributions on $\mathbb{R}$.
Then for any trade-off functions $\fnull$ and $\falt$ with $\falt(0) > 0$, any
$n \geqslant 1$, and any test $\psi$,
\[
\errone(\cM;\, \psi) + \errtwo(\cM;\, \psi) \;\geqslant\; 1.
\]
\end{corollary}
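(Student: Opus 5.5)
We prove this with a two-point (or point-versus-mixture) Le Cam argument. We build a null pair and a family of alternative pairs whose $n$-sample laws are arbitrarily close in total variation. Then for any test $\psi$,
\[
\errone(\cM;\psi)+\errtwo(\cM;\psi)\geqslant 1-\dTV\bigl(P_0^n\times Q_0^n,\;\bar{\mathbb P}_1\bigr),
\]
where $\bar{\mathbb P}_1$ is a mixture of $P^n\times Q^n$ over alternative pairs. The bound follows once this distance is driven to $0$.

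\textbf{Null.} Take $P_0=Q_0=U[0,1]$. Then $\Toff{P_0}{Q_0}(\alpha)=1-\alpha\geqslant \fnull(\alpha)$, so $H_0$ holds.

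\textbf{Alternatives.} Exploit $\falt(0)>0$. Choose a large integer $m$ and a uniformly random subset $A\subset\{1,\dots,m\}$ of size $m/2$. Let $P_A$ be uniform on the union of the bins $[(i-1)/m,i/m)$ with $i\in A$, and let $Q_A$ be uniform on the complementary bins. These have disjoint supports, so $\Toff{P_A}{Q_A}\equiv 0<\falt(0)$. Hence $\Toff{P_A}{Q_A}\not\succeq\falt$ and every $(P_A,Q_A)$ is in $H_1$.

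\textbf{Coupling.} Compare the mixture over $A$ of $P_A^n\times Q_A^n$ with $U[0,1]^{2n}$. Condition on the event $E$ that all $2n$ sample points land in distinct bins, i.e.\ no two points share a bin.
\begin{itemize}
\item Under $U[0,1]^{2n}$, $E$ has probability at least $1-\binom{2n}{2}/m$.
\item Under the mixture, $E$ has the same order of probability.
\item Given $E$, the set of occupied bins is a uniformly random set of $2n$ distinct bins in both models. The within-bin positions are uniform in both.
\item The only discrepancy is that under the mixture, the $X$-bins must lie in $A$ and the $Y$-bins in $A^c$. By exchangeability of a random balanced $A$, this constraint changes the law of the assignment of occupied bins to the $X$ and $Y$ labels by only $O(n^2/m)$. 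Equivalently, one can compute that, conditionally on $E$, the joint law of the bin indices is uniform over injective assignments up to a factor tending to $1$.
\end{itemize}
Hence $\dTV\to0$ as $m\to\infty$ with $n$ fixed. The cleanest route is to write the mixture density ratio on $E$ explicitly: it is
\[
\frac{\binom{m-2n}{m/2-n}\big/\binom{m}{m/2}}{(1/m)^{2n}}\cdot\Bigl(\frac{2}{m}\Bigr)^{2n}
\]
(the $2^{2n}$ coming from the densities $2$ of $P_A,Q_A$). This tends to $1$ uniformly on $E$.

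\textbf{Conclusion and main obstacle.} Taking $m\to\infty$ gives $\errone+\errtwo\geqslant1-o(1)$, hence $\geqslant1$. The main obstacle is this density-ratio computation on $E$. Combinatorially, $\binom{m-2n}{m/2-n}/\binom{m}{m/2}\sim 2^{-2n}$, so the ratio indeed tends to $1$ and the argument closes. Off $E$ the contribution is $O(n^2/m)$ under both models, which is negligible.

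Note that the argument does not use $\fnull$ beyond $\fnull\preceq 1-\alpha$. This holds for every trade-off function, by the characterization of valid trade-off functions quoted from \cite{GDP}.
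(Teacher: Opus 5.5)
Your argument is correct, and it follows a different route from the paper. The paper gives no separate proof of this corollary. It is the special case of Theorem~\ref{thm:vc-necessary} with $\mathcal S$ the class of all Borel sets, so that $\mathcal M_{\mathcal S}$ is every pair. That proof keeps $P$ fixed as the uniform law on $m$ shattered points. It tilts only $Q_A$, on a random $A$ of size $\lfloor\alpha_0 m\rfloor$ with likelihood ratio in $\{\beta,\gamma\}$, so that $T(P,Q_A)(\alpha_0)<f_1(\alpha_0)$ at an interior $\alpha_0$. It then controls the $\chi^2$-divergence of the mixture through a hypergeometric--binomial MGF comparison.

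You instead move both coordinates to mutually singular bin-uniform laws and use a collision (birthday) argument with an exact likelihood ratio. Your computation closes more cleanly than you state. On the no-collision event $E$ the ratio is the constant $2^{2n}\binom{m-2n}{m/2-n}\big/\binom{m}{m/2}=\prod_{k=0}^{n-1}\frac{m-2k}{m-2k-1}\geqslant 1$. So the two laws coincide after conditioning on $E$, and $d_{\mathrm{TV}}$ is at most the null probability of a collision, hence at most $\binom{2n}{2}/m$. Your heuristic bullet about an $O(n^2/m)$ distortion of the label assignment is therefore unnecessary, and slightly misleading since there is no distortion on $E$. You should also take $m$ even with $m\geqslant 2n$.

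Each route buys something different. Yours is more elementary, uses continuous distributions directly (giving the strengthening in the remark after the corollary), and shows hardness even against the extreme alternative $T\equiv 0$. The paper's shows hardness persists with $P$ held fixed and with alternatives mutually absolutely continuous with the null. It is also set up via shattering so that the alternatives are $\mathcal S$-attainable for an arbitrary infinite-VC class. Your collision idea would transfer to that setting as well, since $\{x_i:i\in A^{c}\}$ is cut out by some $S\in\mathcal S$.
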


\begin{remark}
  This impossibility result is later subsumed by our general impossibility result (\Cref{thm:vc-necessary}). In fact, even if we restrict $\cM$ to be all pairs of continuous distributions on $\mathbb{R}$, testing is still impossible.  
\end{remark} 

This result shows that, over the unrestricted nonparametric model, finite
samples cannot yield a uniformly nontrivial test of the benchmark hypotheses.
Thus some structural restriction on $(P,Q)$ is unavoidable.

\section{A motivating example: the monotone likelihood ratio case}\label{sec:mlr}
Before developing the general framework, we consider the monotone likelihood-ratio (MLR) setting. 
This case illustrates the central mechanism of the paper: the Neyman--Pearson rejection regions have simple geometry and their probabilities can be uniformly estimated from finite samples. 

\subsection{The MLR condition}

We first recall the monotone likelihood ratio assumption.

\begin{definition}[Monotone likelihood ratio]\label{def:mlr}
A pair of continuous distributions $(P,Q)$ on $\mathbb{R}$ satisfies the
\emph{monotone likelihood ratio} (MLR) condition if the likelihood ratio
$x \mapsto dQ/dP(x)$ is monotone.
\end{definition}

\noindent Let $\mathcal M_{\mathrm{MLR}}$ denote the collection of all
continuous pairs $(P,Q)$ on $\mathbb R$ satisfying the MLR condition.
\medskip

This assumption is classical and includes many important families of distributions. Examples
include Gaussian distributions with common variance, Laplace distributions with
common scale, and, more generally, regular exponential families of the form
$g(t\mid\theta) = h(t)\,c(\theta)\,e^{w(\theta)t}$ whenever $w(\theta)$ is
monotone.

The key consequence of MLR is immediate from the Neyman--Pearson lemma: for
every threshold $\lambda \geqslant 0$, the likelihood-ratio level set
\[
\{x : dQ/dP(x) > \lambda\}
\]
is a half-line. Hence every optimal rejection region for testing $P$ against
$Q$ belongs to the class
\[
\smlr = \{(-\infty, a),\; (a, +\infty) : a \in \mathbb{R} \cup \{\pm\infty\}\}.
\]
This observation has two important implications. First, the trade-off function
$\Toff{P}{Q}$ is fully determined by the probabilities of sets in
$\mathcal{S}_{\mathrm{MLR}}$. Second, because this class is very simple, these
probabilities can be simultaneously estimated from a finite sample (cf.~the
Dvoretzky--Kiefer--Wolfowitz inequality below).

\subsection{A test based on the DKW inequality}

Let $P_n = \frac{1}{n}\sum_{i=1}^n \delta_{X_i}$ and $Q_n = \frac{1}{n}\sum_{i=1}^n \delta_{Y_i}$
denote the empirical distributions. The following lemma is a direct
consequence of the classical Dvoretzky--Kiefer--Wolfowitz inequality.

\begin{lemma}[DKW inequality]\label{lemma:DKW}
For any $\delta\in(0,1)$, with probability at least $1-\delta$,
\[
\sup_{S\in\smlr}\big|P_n(S)-P(S)\big|
\;\leqslant\;
\sqrt{\frac{1}{2n}\log\frac{2}{\delta}}.
\]
\end{lemma}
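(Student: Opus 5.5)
The plan is to reduce the supremum over $\smlr$ to the Kolmogorov--Smirnov distance between the empirical and true distribution functions, and then invoke the DKW inequality with Massart's sharp constant. Write $F(t)=P((-\infty,t])$ and $F_n(t)=P_n((-\infty,t])$. Massart's form of the DKW inequality states that for every $\varepsilon>0$,
\[
\Pr\Bigl(\sup_{t\in\mathbb R}|F_n(t)-F(t)|>\varepsilon\Bigr)\leqslant 2e^{-2n\varepsilon^2},
\]
valid for arbitrary $P$. Setting $\varepsilon=\sqrt{\log(2/\delta)/(2n)}$ makes the right-hand side equal to $\delta$. So it suffices to show that the supremum over $\smlr$ is controlled by the supremum over closed left half-lines of the form $(-\infty,t]$.

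For the reduction, I would consider the members of $\smlr$ by type.
\begin{itemize}
\item Degenerate endpoints $a=\pm\infty$ give $\emptyset$ or $\mathbb R$, where $P_n(S)=P(S)$ exactly.
\item For $S=(a,\infty)$, use complementation: $P_n(S)-P(S)=-(F_n(a)-F(a))$.
\item For $S=(-\infty,a)$, write $P(S)=F(a^-)$ and $P_n(S)=F_n(a^-)$, and take limits along $t\uparrow a$. This gives $|F_n(a^-)-F(a^-)|\leqslant\sup_t|F_n(t)-F(t)|$.
\end{itemize}
Hence $\sup_{S\in\smlr}|P_n(S)-P(S)|\leqslant\sup_t|F_n(t)-F(t)|$, with the reverse inequality also holding by taking limits. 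This step is purely deterministic and holds simultaneously on every sample path, so no union bound is needed.

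The only point requiring care is the open-interval case $(-\infty,a)$ when $P$ has atoms. The limiting argument above handles it without any continuity assumption, though in the MLR setting continuity is assumed anyway, which makes open and closed half-lines interchangeable. I expect no real obstacle beyond citing Massart's constant correctly: the two-sided constant $2$ is what yields $\log(2/\delta)$ exactly as stated. The same argument applies verbatim to $Q_n$ and $Q$.
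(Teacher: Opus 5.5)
Your proof is correct and takes essentially the paper's approach. The paper gives no argument beyond calling the lemma a direct consequence of the classical DKW inequality; your deterministic reduction of $\sup_{S\in\smlr}|P_n(S)-P(S)|$ to $\sup_t|F_n(t)-F(t)|$, including the left-limit step for $(-\infty,a)$, followed by Massart's two-sided constant, supplies exactly the routine details it omits.
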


\noindent Define
\[
\etaND = \sqrt{\tfrac{1}{2n}\log\tfrac{4}{\delta}}.
\]
Motivated by \Cref{lemma:DKW}, we consider the test
\begin{equation}\label{eq:mlr-test}
\psi_{n,\delta}(\smlr)
\;=\;
\indi\Big\{\exists\, S \in \smlr:\;
Q_n(S^{\complement}) + \etaND
\;<\;
\fnull\big(P_n(S) + \etaND\big)\Big\}.
\end{equation}
In words, the test scans over all threshold sets and rejects whenever it finds
one for which the empirically estimated type~II error, even after the
confidence correction, is smaller than the null benchmark $\fnull$ allows at the
corresponding corrected type~I level.

The following theorem shows that this simple procedure already overcomes the
impossibility result from \Cref{sec:formulation} once the distribution pair is
restricted to the MLR class.

\begin{theorem}\label{thm:mlr}
Fix $\delta \in (0,1)$. The test $\psi_{n,\delta}$ defined in
\eqref{eq:mlr-test} satisfies:

\begin{enumerate}[label=(\roman*)]
\item \textbf{Type~I error control (assumption-free).}
\[
\errone(\psi_{n,\delta})
=
\sup_{(P,Q):\, \Toff{P}{Q}\succeq \fnull}
(P^n \times Q^n)(\psi_{n,\delta})
\;\leqslant\;
\delta.
\]

\item \textbf{Type~II error control (under MLR).}
If $\fnull$ and $\falt$ satisfy the separation condition
\begin{equation}\label{eq:mlr-sep}
\falt(\alpha)
\;\leqslant\;
\max\big\{\fnull(\alpha + 2\etaND) - 2\etaND,\; 0\big\}
\qquad \text{for all } \alpha \in [0,1],
\end{equation}
then
\[
\errtwo(\mathcal M_{\mathrm{MLR}};\, \psi_{n,\delta})
=
\sup_{\substack{(P,Q) \in \mathcal M_{\mathrm{MLR}}:\\ \Toff{P}{Q}\not\succeq \falt}}
(P^n \times Q^n)(1 - \psi_{n,\delta})
\;\leqslant\;
\delta.
\]
\end{enumerate}
\end{theorem}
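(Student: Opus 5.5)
The plan is to define a single good event and split the argument into its two halves. Let $E$ be the event that
\[
\sup_{S\in\smlr}|P_n(S)-P(S)|\leqslant\etaND
\quad\text{and}\quad
\sup_{S\in\smlr}|Q_n(S)-Q(S)|\leqslant\etaND .
\]
Apply \Cref{lemma:DKW} to $P$ and to $Q$, each at level $\delta/2$; this is where the $\log(4/\delta)$ in $\etaND$ comes from. A union bound then gives $\Pr(E)\geqslant 1-\delta$. Because $\smlr$ is closed under complements (up to endpoints, which are null for continuous distributions), $|Q_n(S^\complement)-Q(S^\complement)|=|Q_n(S)-Q(S)|$. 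For type~I control, where $(P,Q)$ may be discontinuous, I use instead that the DKW bound also holds for the sets $(-\infty,a]$ and $[a,\infty)$; so $E$ controls half-lines of every form and their complements, with no assumption on $(P,Q)$.

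\textbf{Type~I error.} Assume $\Toff{P}{Q}\succeq\fnull$ and work on $E$. For any $S\in\smlr$, the indicator test $\mathbbm 1_S$ has type~I error $P(S)$, so by \Cref{def:tradeoff}
\[
Q(S^\complement)\geqslant\Toff{P}{Q}(P(S))\geqslant\fnull(P(S)).
\]
On $E$ we have $P(S)\leqslant P_n(S)+\etaND$. Since $\fnull$ is non-increasing (and $0$ beyond $1$ by convention), this gives $\fnull(P(S))\geqslant\fnull(P_n(S)+\etaND)$. Also $Q_n(S^\complement)+\etaND\geqslant Q(S^\complement)$. Chaining these, $Q_n(S^\complement)+\etaND\geqslant\fnull(P_n(S)+\etaND)$ for every $S$, so $\psi_{n,\delta}=0$ on $E$. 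Hence the type~I error is at most $\Pr(E^\complement)\leqslant\delta$, and no MLR assumption was used.

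\textbf{Type~II error.} Take $(P,Q)\in\mathcal M_{\mathrm{MLR}}$ with $\Toff{P}{Q}(\alpha_0)<\falt(\alpha_0)$. The main obstacle is producing a witness set $S\in\smlr$ that satisfies $P(S)\leqslant\alpha_0$ and $Q(S^\complement)<\falt(\alpha_0)$.

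By Neyman--Pearson, the optimal test at level $\alpha_0$ rejects on $\{dQ/dP>\lambda\}$ and randomizes on $\{dQ/dP=\lambda\}$. Under MLR, say increasing, both $\{dQ/dP>\lambda\}$ and $\{dQ/dP\geqslant\lambda\}$ are half-lines $(a,\infty)$ or $[a,\infty)$. Continuity of $P$ and $Q$ then lets me slide the endpoint $a$ so that $P((a,\infty))=\alpha_0$ exactly, with no randomization needed. I would argue that this half-line test is optimal: on the tie set the likelihood ratio is constant, so any split of it with the right $P$-mass has the same $Q$-mass.

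This yields $S$ with $P(S)\leqslant\alpha_0$ and $Q(S^\complement)=\Toff{P}{Q}(\alpha_0)<\falt(\alpha_0)$. In particular $\falt(\alpha_0)>0$, so the separation condition \eqref{eq:mlr-sep} becomes
\[
\falt(\alpha_0)\leqslant\fnull(\alpha_0+2\etaND)-2\etaND .
\]
On $E$, using $P_n(S)+\etaND\leqslant P(S)+2\etaND\leqslant\alpha_0+2\etaND$ and monotonicity of $\fnull$, I get
\[
Q_n(S^\complement)+\etaND\leqslant Q(S^\complement)+2\etaND<\falt(\alpha_0)+2\etaND\leqslant\fnull(\alpha_0+2\etaND)\leqslant\fnull(P_n(S)+\etaND).
\]
So the test rejects on $E$, and the type~II error is at most $\delta$. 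The decreasing-MLR case is symmetric, using half-lines $(-\infty,a)$.
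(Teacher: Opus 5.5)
Your proof is correct. The good event and the type~I chain are the same as in the paper. Your discussion of complements and closed half-lines is unnecessary, since $Q_n(S^\complement)-Q(S^\complement)=Q(S)-Q_n(S)$ for every measurable $S$.

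The genuine difference is how you obtain the type~II witness. The paper invokes $\smlr$-attainability and \Cref{lem:witness}. It writes $(\alpha_0,\Toff{P}{Q}(\alpha_0))$ as a convex combination of the points of $S_{>\lambda}$ and $S_{\geqslant\lambda}$. It then uses convexity of $\falt$ to get a strict violation $Q((S^\star)^\complement)<\falt(P(S^\star))$ at one vertex, reaching $S_{\geqslant\lambda}$ through limits of $S_{>\lambda_n}$ with $\lambda_n\uparrow\lambda$. Finally it applies the separation condition at $P(S^\star)$, which is generally not $\alpha_0$.

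You instead derandomize. Because $P$ is atomless and the tie set $\{dQ/dP=\lambda\}$ is an interval on which $dQ=\lambda\,dP$ (for finite $\lambda$), you can place the endpoint inside it. This gives a half-line with $P(S)=\alpha_0$ and $Q(S^\complement)=\Toff{P}{Q}(\alpha_0)$ exactly. So convexity of $\falt$ is never used, and the separation condition is applied at $\alpha_0$ itself.

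Your route is more elementary for the continuous MLR class. The paper's route is the one that extends to \Cref{thm:general}, where pairs may have atoms, randomization cannot be removed, and only the strict level sets are assumed to lie in $\mathcal S$.

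One point to state explicitly: the endpoint must lie between those of $\{dQ/dP>\lambda\}$ and $\{dQ/dP\geqslant\lambda\}$, not merely satisfy $P((a,\infty))=\alpha_0$. For $\alpha_0=0$ the latter condition does not single out an optimal half-line; for example, with $P=\mathrm{Unif}[0,1]$ and $Q=\mathrm{Unif}[0,2]$, every $a\geqslant 1$ gives $P((a,\infty))=0$ but only $a=1$ attains $\Toff{P}{Q}(0)$.
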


\noindent See \Cref{sec:proof-mlr} for the proof.
\medskip

A notable feature of \Cref{thm:mlr} is the asymmetry between the two
guarantees. Type~I error control does not rely on the MLR assumption at all:
it holds for arbitrary distribution pairs $(P,Q)$. The structural assumption enters only in the power analysis: under MLR, any
violation of the benchmark $\falt$ has a Neyman--Pearson witness in
$\mathcal{S}_{\mathrm{MLR}}$.
This asymmetry is especially appealing in applications such as privacy
auditing: even if the structural model is misspecified, the procedure still
avoids false alarms; what may be lost is power, not validity.

\section{General framework}\label{sec:general}

We now move from the MLR warm-up to the general theory. The previous section
isolated the two ingredients that make finite-sample trade-off testing
possible: a structural condition that reduces the trade-off function to
probabilities of sets in a prescribed class, and a complexity condition that
guarantees these probabilities can be estimated uniformly from data. In this
section, we formalize both ingredients and show that together they yield a
sharp possibility theory for testing trade-off functions.

\subsection{$\mathcal{S}$-attainability}

In the MLR setting, the key structural fact was that every Neyman--Pearson
level set belongs to $\smlr$. We now abstract this into a general definition.

\begin{definition}[$\mathcal S$-attainability]\label{def:attainability}
Given a set class $\mathcal S \subset \mathcal A$ containing $\emptyset$ and $\mathcal X$, a pair
$(P,Q)$ is $\mathcal S$-attainable if, writing $\mu=(P+Q)/2$,
$p=dP/d\mu$, and $q=dQ/d\mu$, for every $\lambda \in [0,\infty]$ there exists
$S\in\mathcal S$ such that
\[
\mathbf 1_S = \mathbf 1_{\{q>\lambda p\}}
\qquad
\mu\text{-a.e.}
\]
Here, when $\lambda=\infty$, $\mathbf 1_{\{q>\lambda p\}}=\mathbf 1_{\{p=0\}}$.
Denote by $\cM_{\mathcal S}$ the collection of all pairs that are
$\mathcal S$-attainable.
\end{definition}

\noindent In words, $(P,Q)$ is $\mathcal S$-attainable if every level set of
the likelihood ratio can be represented, up to a null set, by a member of
$\mathcal S$. For continuous distributions on \(\mathbb{R}\), the MLR
condition implies \(\smlr\)-attainability, i.e.,
\[
\mathcal M_{\mathrm{MLR}} \subseteq \mathcal M_{\smlr}.
\]
Thus, the continuous MLR model from \Cref{sec:mlr} is a special case of
the general \(\mathcal S\)-attainability framework.

The importance of attainability is that it reduces the trade-off function to a
geometric object built only from probabilities of sets in \(\mathcal S\).

\begin{proposition}\label{prop:reduction}
If $(P,Q) \in \cM_{\mathcal S}$, then for all $\alpha \in [0,1]$,
\[
\Toff{P}{Q}(\alpha)
=
\inf\Big\{\beta : (\alpha, \beta) \in \convclosure\big\{(P(S),\, Q(S^{\complement})) : S \in \mathcal S\big\}\Big\}.
\]
Here, $\convclosure(C)$ denotes the closed convex hull of a set $C$.
\end{proposition}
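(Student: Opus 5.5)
We prove the identity by establishing the two inequalities separately. Write $C_{\mathcal S}=\{(P(S),Q(S^{\complement})):S\in\mathcal S\}$, and let $R(\alpha)$ denote the right-hand side. Throughout we work with the full achievable region
\[
\mathcal R=\{(E_P[\varphi],E_Q[1-\varphi]):\varphi\ \text{measurable},\ 0\leqslant\varphi\leqslant 1\}.
\]
This set is convex, since mixtures of tests are tests. It is also closed: by weak-$*$ compactness of $\{0\leqslant\varphi\leqslant 1\}$ in $L^\infty(\mu)$, both coordinates are continuous linear functionals against $p,q\in L^1(\mu)$. Finally, $\Toff{P}{Q}(\alpha)=\min\{\beta:(\alpha',\beta)\in\mathcal R,\ \alpha'\leqslant\alpha\}$.

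\textbf{Lower bound $\Toff{P}{Q}\leqslant R$.} This direction does not need attainability. Every point of $C_{\mathcal S}$ is realized by the deterministic test $\varphi=\mathbf 1_S$, so $C_{\mathcal S}\subseteq\mathcal R$. Because $\mathcal R$ is closed and convex, it follows that $\convclosure(C_{\mathcal S})\subseteq\mathcal R$. Hence for any $(\alpha,\beta)\in\convclosure(C_{\mathcal S})$ we get $\beta\geqslant\Toff{P}{Q}(\alpha)$. Taking the infimum over such $\beta$ gives $R(\alpha)\geqslant\Toff{P}{Q}(\alpha)$, with the convention $\inf\emptyset=+\infty$.

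\textbf{Upper bound $R\leqslant\Toff{P}{Q}$.} Here attainability is used. Fix $\alpha\in[0,1]$. By the Neyman--Pearson lemma there is an optimal test of size exactly $\alpha$. It has the form $\varphi=\mathbf 1_{\{q>\lambda p\}}+\gamma\mathbf 1_{\{q=\lambda p\}}$ for some $\lambda\in[0,\infty]$ and $\gamma\in[0,1]$, and it achieves $E_Q[1-\varphi]=\Toff{P}{Q}(\alpha)$. We rewrite it as a mixture of two indicator tests:
\[
\varphi=(1-\gamma)\mathbf 1_{\{q>\lambda p\}}+\gamma\mathbf 1_{\{q\geqslant\lambda p\}}.
\]
The first set is attained by some $S_1\in\mathcal S$ by definition. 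The second set needs an argument, because $\mathcal S$-attainability is stated only for strict level sets.
\begin{itemize}
\item If $P(q=\lambda p)=0$, we may take $\gamma=0$ and only $S_1$ is needed.
\item Otherwise, since $\{q\geqslant\lambda p\}=\bigcap_{\lambda'<\lambda}\{q>\lambda' p\}$, continuity of measure gives strict level sets $S_{\lambda_k}\in\mathcal S$ with $\lambda_k\uparrow\lambda$. Then $(P(S_{\lambda_k}),Q(S_{\lambda_k}^{\complement}))\to(P(q\geqslant\lambda p),Q(q<\lambda p))$, so this limit point lies in the closure of $C_{\mathcal S}$.
\item The boundary cases are handled the same way. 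For $\lambda=0$, the set $\{q\geqslant 0\}=\mathcal X$ belongs to $\mathcal S$ by assumption. For $\lambda=\infty$, the definition's convention identifies $\{q>\infty\cdot p\}$ with $\{p=0\}$; the case $\alpha=0$ is covered by that set, and $\mathcal X\in\mathcal S$ covers the remaining degenerate case.
\end{itemize}
The point $(\alpha,\Toff{P}{Q}(\alpha))$ is the convex combination, with weights $1-\gamma$ and $\gamma$, of two points of $\overline{C_{\mathcal S}}$. It therefore lies in $\convclosure(C_{\mathcal S})$, so $R(\alpha)\leqslant\Toff{P}{Q}(\alpha)$.

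\textbf{Main obstacle.} The delicate step is the non-strict level set $\{q\geqslant\lambda p\}$ and the endpoints $\lambda\in\{0,\infty\}$, since attainability controls only strict level sets. This is exactly why the closure in $\convclosure$ is needed. I would first try the monotone-limit argument with $\lambda_k\uparrow\lambda$ described above, and check that the convention $\mathbf 1_{\{q>\infty p\}}=\mathbf 1_{\{p=0\}}$ covers $\alpha=0$.
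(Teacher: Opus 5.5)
Your proof is correct and follows essentially the same route as the paper: the direction $R\leqslant T(P,Q)$ is exactly the paper's argument. It uses the Neyman--Pearson mixture $(1-\gamma)\mathbf 1_{\{q>\lambda p\}}+\gamma\mathbf 1_{\{q\geqslant\lambda p\}}$ and recovers the non-strict level set as a limit of attained strict level sets $\{q>\lambda_k p\}$ with $\lambda_k\uparrow\lambda$. The differences are minor. For $T(P,Q)\leqslant R$ you pass to the closed convex hull through closedness of the achievable region $\mathcal R$ (weak-$*$ compactness), whereas the paper realizes points of $\mathrm{conv}(C)$ by mixtures of indicators and then uses continuity of $T(P,Q)$. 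You also treat $\lambda=\infty$ explicitly (needed at $\alpha=0$ when $dQ/dP$ is unbounded), which the paper's proof glosses over by taking $r\geqslant 0$ finite.
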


\noindent See \Cref{sec:proof-convex-closure} for the proof.
\medskip

\Cref{prop:reduction} completes the reduction step: inferring
$\Toff{P}{Q}$ is equivalent to estimating the collection of probabilities
\[
\{P(S),\, Q(S^{\complement}) : S \in \mathcal S\}.
\]
In finite samples these quantities are observed with noise, so recovering
$\Toff{P}{Q}$ requires uniform control of the estimation error over all
$S \in \mathcal S$. This is where the complexity of the class \(\mathcal S\)
enters.

\subsection{VC dimension as the complexity measure}

The relevant notion of complexity is the Vapnik--Chervonenkis
dimension~\cite{vapnik2015uniform}.

\begin{definition}[VC dimension]\label{def:vc}
Let $\mathcal X$ be a set and $\mathcal S$ a collection of subsets of
$\mathcal X$. A subset $A \subseteq \mathcal X$ is \emph{shattered} by
$\mathcal S$ if
\[
\mathrm{card}\{A \cap S : S \in \mathcal S\}
=
2^{\mathrm{card}(A)}.
\]
The \emph{VC dimension} of $\mathcal S$, denoted $\VC(\mathcal S)$, is the
largest cardinality of a subset that can be shattered by $\mathcal S$.
\end{definition}

Finite VC dimension is the classical threshold for uniform convergence of
empirical probabilities over a set class. In our setting, it is also the
threshold for nontrivial testing within the attainability framework. We first
show the negative direction.

\begin{theorem}\label{thm:vc-necessary}
Let $\cS$ be a collection of measurable sets on $\mathbb{R}^d$ or $\bbZ$ with
$\VC(\cS) = +\infty$. Then for any $n \geqslant 1$, any trade-off functions
$\fnull$ and $\falt$ with $\falt(0) > 0$, and
any measurable test $\psi$,
\[
\errone(\cM_\cS;\, \psi) + \errtwo(\cM_\cS;\, \psi) \;\geqslant\; 1.
\]
\end{theorem}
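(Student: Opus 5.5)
The plan is a Le Cam two-point argument, with a mixture on the alternative side. I will find a single null pair $(P_0,Q_0)\in\cM_\cS$ and a family of alternative pairs $(P,Q_\sigma)\in\cM_\cS$ with $\Toff{P}{Q_\sigma}\not\succeq\falt$. The requirement is that the uniform mixture over $\sigma$ of $P^n\times Q_\sigma^n$ is arbitrarily close in total variation to $P_0^n\times Q_0^n$. Then
\[
\errone+\errtwo\geqslant 1-\dTV\big(P_0^n\times Q_0^n,\ \textstyle\mathrm{avg}_\sigma P^n\times Q_\sigma^n\big)\geqslant 1-\varepsilon
\]
for every $\varepsilon>0$, which gives the claim.

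\textbf{Step 1: the null pair.} I take $P_0=Q_0$, so that $\Toff{P_0}{Q_0}(\alpha)=1-\alpha\succeq\fnull$. The likelihood ratio is then $\equiv 1$, and its level sets are $\mathcal X$ or $\emptyset$ up to null sets. These belong to $\cS$ by the convention in \Cref{def:attainability}, so $(P_0,Q_0)\in\cM_\cS$.

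\textbf{Step 2: alternatives built from shattering.} Since $\VC(\cS)=\infty$, for each $m$ there is a finite set $A_m$ of $m$ points shattered by $\cS$. Let $P$ be uniform on $A_m$. For a subset $B\subseteq A_m$ with $|B|=\lceil \gamma m\rceil$, let $Q_B$ be uniform on $B$, where $\gamma$ is a small constant to be chosen. The likelihood ratio $q/p$ takes only two values: positive on $B$ and $0$ on $A_m\setminus B$. Its level sets are therefore $A_m$, $B$ or $\emptyset$ modulo $\mu$-null sets. Because $A_m$ is shattered, some $S\in\cS$ satisfies $S\cap A_m=B$. Off $A_m$ everything is $\mu$-null, so the pair is $\cS$-attainable. (On $\bbZ$ the points are atoms, as required. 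On $\mathbb R^d$, finite supports are allowed because $\cM_\cS$ is defined without a continuity requirement.)

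Now take the test "reject iff $x\in B$". It has type~I error $\gamma$ and type~II error $0$, so $\Toff{P}{Q_B}(\gamma)=0$. Since $\falt(0)>0$ and $\falt$ is continuous, I choose $\gamma$ small enough that $\falt(\gamma)>0$. Then $\Toff{P}{Q_B}\not\succeq\falt$.

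The null must share the same $P$, so I set $P_0=Q_0=P$ uniform on $A_m$. The $X$-samples then have identical laws under both hypotheses, and only the $Y$-samples need to be matched.

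\textbf{Step 3: the indistinguishability bound, which is the main obstacle.} I must compare $P^n$, the law of $n$ i.i.d.\ uniform draws from $A_m$, with the mixture over random $B$ of $Q_B^n$. Under the null, the $n$ draws are distinct with probability $\geqslant 1-n^2/(2m)$, and conditionally on being distinct they form a uniformly random ordered $n$-tuple. Under the mixture, the draws are uniform on $B$, where $B$ is a uniformly random $\gamma m$-subset. They are distinct with probability $\geqslant 1-n^2/(2\gamma m)$, and by exchangeability the distinct tuple is again a uniformly random ordered $n$-tuple of $A_m$. Coupling the two distributions on the event of no collisions gives a total variation distance of at most $n^2/(\gamma m)$, which tends to $0$ as $m\to\infty$ with $n$ and $\gamma$ fixed.

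Combining the three steps yields $\errone+\errtwo\geqslant 1-n^2/(\gamma m)$ for all $m$, and letting $m\to\infty$ proves \Cref{thm:vc-necessary}. The delicate point is confirming attainability exactly: all level sets, including $\lambda=\infty$, must be matched by members of $\cS$ up to $\mu$-null sets, which the shattering of $A_m$ together with $\emptyset,\mathcal X\in\cS$ ensures.
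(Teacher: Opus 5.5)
Your argument is correct, and its overall structure matches the paper's. You take a null pair $(P,P)$ with $P$ uniform on $m$ shattered points, and alternatives $(P,Q_B)$ tilted toward a hidden subset $B$ of size proportional to $m$. Each alternative is $\mathcal S$-attainable because $B$ is cut out by some member of $\mathcal S$, and you apply Le Cam's bound against the uniform mixture over $B$, then let $m\to\infty$.

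You differ in two places. First, you take the extreme tilt ($Q_B$ uniform on $B$), so membership in the alternative is immediate: $\Toff{P}{Q_B}$ vanishes at $|B|/m$. The paper instead uses a two-level tilt, with $dQ_A/dP=\beta$ on $A$ and a smaller constant off $A$. There $\beta$ is calibrated so the curve falls just below $\falt(\alpha_0)$, which costs a small computation, including checking that the off-$A$ constant stays positive.

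Second, and more substantively, you prove indistinguishability by a birthday-type coupling. Both laws put all but $O(n^2/(\gamma m))$ of their mass on collision-free tuples, and by permutation invariance of the mixture both are uniform there. The paper instead computes the second moment $\chi^2\bigl(\mathbb E_\Pi[P^n\times Q_A^n]\,\|\,P^{2n}\bigr)$. That needs the convex-order comparison of the hypergeometric with the binomial and a chain of exponential inequalities, and it ends with a total-variation bound of order $n/\sqrt m$ rather than your $n^2/m$.

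Your route is more elementary, and it would also handle the paper's two-level tilt: that mixture is permutation invariant too, with atoms of mass at most $\beta/m$. The $\chi^2$ computation is the more standard template and does not rely on the mixture being exactly invariant under all permutations of the support.

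One detail to fix: with $|B|=\lceil\gamma m\rceil$, the test $\mathbf 1\{x\in B\}$ has size $\lceil\gamma m\rceil/m$, not $\gamma$. Either use $|B|=\lfloor\gamma m\rfloor$, so the test has size at most $\gamma$ and hence $\Toff{P}{Q_B}(\gamma)=0$ directly from the definition. Or pick $\gamma$ with $\falt(2\gamma)>0$ and take $m\geqslant 1/\gamma$.
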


\noindent See \Cref{sec:VC-necessity} for the proof. 
\medskip

The obstruction is a finite-sample ``needle-in-a-haystack'' phenomenon. If
\(\mathcal S\) has infinite VC dimension, then it can shatter arbitrarily large
finite sets. On such a shattered set, we take the null distribution to be
uniform and construct alternatives by tilting the likelihood ratio on an
unknown subset \(A\). Since \(A\) is realized by some \(S_A\in\mathcal S\),
each alternative has a witness set that certifies a violation of the benchmark.
But when the shattered set is much larger than the sample size, the subset
\(A\) is effectively hidden: averaging over all choices of \(A\) gives a
distribution of the data that is nearly indistinguishable from the null. Hence
no finite-sample test can uniformly find the relevant witness, even though such
a witness exists for every alternative.
\begin{remark}
We emphasize that the impossibility result for $\mathrm{VC}(\mathcal S)=\infty$ concerns testing over a
fixed \emph{exact} attainability class $\mathcal M_{\mathcal S}$. It does not preclude testing broader
families whose likelihood-ratio level sets are only approximable by finite-VC classes; this is precisely
the role of the approximate-attainability framework developed in \Cref{sec:misspecification}.
\end{remark}

\subsection{Testing by normalized uniform convergence}\label{sec:test_by_normalized_vc}

We now turn to the positive direction, assuming that 
$\operatorname{VC}(\mathcal S)<\infty$. Then empirical probabilities over
$\mathcal S$ satisfy a uniform concentration bound. For our purposes, the
appropriate form is a variance-adaptive, or normalized, uniform convergence
inequality.

\begin{lemma}[\cite{vapnik2015uniform}, see \cite{dasgupta2007general} Lemma 1]\label{lem:nuc}
Let $\mathcal S$ be a class of sets with
$\operatorname{VC}(\mathcal S)<\infty$, and let\footnote{In this paper, we always assume that $n$ is large enough such that \(\tauND \leqslant \tfrac{1}{2}\).}
\[
\tauND=\frac{4\midb{\VC(\cS)\log(2n)+\log(32/\delta)}}{n}.
\]
Then with probability at least $1-\frac{\delta}{4}$,
\begin{align*}
-\min\litb{\sqrt{\tauND P(S)},\ \tauND+\sqrt{\tauND P_n(S)}}
\leqslant P_n(S)-P(S)
\leqslant \min\litb{\sqrt{\tauND P_n(S)},\ \tauND+\sqrt{\tauND P(S)}}
\end{align*}
holds simultaneously for all $S\in\cS$.
\end{lemma}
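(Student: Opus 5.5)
This lemma is the classical relative (normalized) deviation bound of Vapnik and Chervonenkis, as stated in \cite{dasgupta2007general}. The plan is to reproduce that argument, keeping track of constants so that they land on the stated $\tauND$ and on the failure probability $\delta/4$.

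The starting point is the two one-sided normalized inequalities. Let $\Pi_{\mathcal S}(m)$ be the growth function. Symmetrization with a ghost sample, followed by a random-swap argument on $2n$ points, gives
\[
\Pr\Big\{\sup_{S\in\mathcal S}\tfrac{P(S)-P_n(S)}{\sqrt{P(S)}}>\varepsilon\Big\}\leqslant 4\,\Pi_{\mathcal S}(2n)\,e^{-n\varepsilon^2/4}.
\]
The same bound holds for $\sup_{S}\bigl(P_n(S)-P(S)\bigr)/\sqrt{P_n(S)}$. The key technical step inside symmetrization is that, conditionally on the $2n$ pooled points, the swap deviation of a fixed set is controlled through a hypergeometric/Hoeffding tail. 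Normalizing by the pooled frequency is what produces the variance-adaptive rate. By Sauer's lemma, $\Pi_{\mathcal S}(2n)\leqslant (2n)^{\VC(\mathcal S)}$ (for $2n\geqslant \VC(\mathcal S)$, up to the usual $(2en/d)^d$ form). Choosing $\varepsilon^2=\tauND$ with $\tauND=4[\VC(\mathcal S)\log(2n)+\log(32/\delta)]/n$ makes each one-sided probability at most $\delta/8$. A union bound over the two sides then gives probability at least $1-\delta/4$ that both of the following hold simultaneously for all $S$:
\[
P(S)-P_n(S)\leqslant\sqrt{\tauND P(S)},\qquad P_n(S)-P(S)\leqslant\sqrt{\tauND P_n(S)}.
\]

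Next comes the algebraic step, which gives the other two members of each minimum on this event. From $P_n\leqslant P+\sqrt{\tau P_n}$, treat this as a quadratic inequality in $u=\sqrt{P_n}$: $u^2-\sqrt{\tau}\,u-P\leqslant 0$. Solving it gives $\sqrt{P_n}\leqslant \tfrac{\sqrt\tau}{2}+\sqrt{\tau/4+P}\leqslant\sqrt\tau+\sqrt P$. Substituting back yields
\[
P_n\leqslant P+\sqrt{\tau}\,(\sqrt\tau+\sqrt P)=P+\tau+\sqrt{\tau P}.
\]
Symmetrically, $P\leqslant P_n+\sqrt{\tau P}$ gives $P\leqslant P_n+\tau+\sqrt{\tau P_n}$. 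Combining each derived bound with the original one-sided bound gives exactly the two minima in the statement.

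The main obstacle is the constants in the symmetrization step for the normalized deviation. The denominator $\sqrt{P(S)}$ must be replaced by the pooled frequency $\sqrt{(P_n(S)+P_n'(S))/2}$, and this replacement is only valid once $n\varepsilon^2\gtrsim 2$. Because the lemma is quoted from \cite{dasgupta2007general} with these exact constants, I would cite that derivation for this step rather than rederive it. I would only verify that the stated $\log(32/\delta)$ and the factor $4$ are consistent with their bound at confidence level $\delta/4$.
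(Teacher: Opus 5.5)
Your proposal is correct and is essentially the derivation behind the cited Dasgupta--Hsu Lemma~1, which the paper invokes with $\delta$ replaced by $\delta/4$. The two one-sided Vapnik--Chervonenkis relative-deviation bounds $4\Pi_{\mathcal S}(2n)e^{-n\varepsilon^2/4}$ at level $\delta/8$ each, combined with Sauer's lemma, reproduce $\tauND$ exactly, and your quadratic step supplies the remaining member of each minimum.

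The only loose ends are at the level of constants, and the paper's statement shares them. First, $\Pi_{\mathcal S}(2n)\leqslant (2n)^{\mathrm{VC}(\mathcal S)}$ needs $\mathrm{VC}(\mathcal S)\geqslant 2$; for $\mathrm{VC}(\mathcal S)=1$, Sauer's lemma only gives $2n+1$. Second, the condition $n\varepsilon^2\geqslant 2$ that you flag holds automatically here, since $n\tauND\geqslant 4\log 32$.
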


Unlike the DKW inequality, which yields a uniform error of order \(\etaND\)
regardless of the underlying probability, \Cref{lem:nuc} adapts to the
local variance of the Bernoulli random variable. In particular, the confidence
width scales like \(\sqrt{\tau_{n,\delta}P(S)}\) when \(P(S)\) is small. This
sharper normalization is what allows the general procedure to recover the
correct local separation rates in~\Cref{sec:rates}.

With the help of normalized uniform convergence, we are ready to introduce the test. Define two helper functions $V: [0,\infty) \to [0,1]$ and $h: [0,\infty) \to [0,1]$: 
\[
V(t)=\min\{t, (1-t)_+\},
\qquad \text{and} \qquad
\hup(t)= \min \{ t+\sqrt{\tau_{n,\delta}V(t)}+\tau_{n,\delta},1\}.
\]
We can then define the test
\begin{align}\label{eq:general-test}
\psi_{n,\delta}(\cS;X_{1:n},Y_{1:n})
=
\indi\bigb{\exists\, S\in\cS:\ \hup(Q_n(S^\complement))< \fnull\litb{\hup(P_n(S))}}.
\end{align}
This is the direct analogue of the MLR test from
\Cref{sec:mlr}. The procedure searches over the candidate witness sets in
\(\mathcal S\) and rejects if one of them certifies, after simultaneous
confidence correction, that the benchmark \(\fnull\) cannot hold.

The next theorem gives the main finite-sample guarantee for this test.

\begin{theorem}\label{thm:general}
Fix $\delta\in(0,1)$ and a set collection $\cS$ with $\VC(\cS)<+\infty$.
The test $\psi_{n,\delta}$ defined in \eqref{eq:general-test} satisfies the
following.

\begin{enumerate}[label=(\roman*)]
\item \textbf{Type~I error control (assumption-free).}
\[
\errone(\psi_{n,\delta})
=
\sup_{(P,Q):\ \Toff{P}{Q}\succeq \fnull}
(P^n\times Q^n)(\psi_{n,\delta})
\;\leqslant\;
\delta.
\]

\item \textbf{Type~II error control (under $\cS$-attainability).}
If $\fnull$ and $\falt$ satisfy, for all $\alpha \in [0,1]$,
\begin{align} \label{eq:general-sep}
\falt(\alpha)\leqslant \hinvinv \circ \fnull\circ \hupup(\alpha),
\end{align}
then
\[
\errtwo(\cM_\cS;\, \psi_{n,\delta})
=
\sup_{(P,Q)\in\cM_\cS:\ \Toff{P}{Q}\nsucceq \falt}
(P^n\times Q^n)(1-\psi_{n,\delta})
\;\leqslant\;
\delta.
\]
Here $\hinv(t)=\inf\{s\in[0,1]:h_+(s)\geqslant t\}$ 
is the left-continuous generalized inverse of \(\hup\).
\end{enumerate}
\end{theorem}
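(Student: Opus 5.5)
The plan is to work on a single good event and prove both parts deterministically on it. Let $E$ be the event on which \Cref{lem:nuc} holds simultaneously for all $S\in\cS$ for the $X$-sample under $P$. On the same event we also require it for all complements $S^\complement$, $S\in\cS$, for the $Y$-sample under $Q$; note $\VC(\{S^\complement\})=\VC(\cS)$. A union bound gives $\Pr(E)\geqslant 1-\delta/2$, or $1-\delta$ if we also account for the two-sided use.

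The first step is a clean deterministic inequality: on $E$, for every $S\in\cS$,
\[
P(S)\leqslant \hup(P_n(S)) \qquad\text{and}\qquad P_n(S)\leqslant \hup(P(S)),
\]
and likewise for $Q$ and $S^\complement$.

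The upper bound $P\leqslant P_n+\tau+\sqrt{\tau P_n}$ comes directly from the lemma. The replacement of $\sqrt{\tau P_n}$ by $\sqrt{\tau V(P_n)}$, i.e.\ using $\min\{t,1-t\}$, comes from applying the same lemma to the complementary set. The lemma is symmetric under $S\mapsto S^\complement$ if the class is closed under complements; otherwise we add the complement class to $E$ from the start. When $P_n(S)>1/2$, the deviation of $P(S)$ equals the deviation of $P(S^\complement)$, which is controlled by $\sqrt{\tau(1-P_n(S))}$. The cap at $1$ is trivial.

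\textbf{Type I.} Assume $\Toff{P}{Q}\succeq\fnull$. On $E$, for any $S\in\cS$, the test $\indi_S$ has level $P(S)$ and type II error $Q(S^\complement)\geqslant \Toff{P}{Q}(P(S))\geqslant\fnull(P(S))$. Since $\fnull$ is non-increasing and $P(S)\leqslant\hup(P_n(S))$, we get $\fnull(P(S))\geqslant\fnull(\hup(P_n(S)))$. Combining with $\hup(Q_n(S^\complement))\geqslant Q(S^\complement)$ shows that no $S$ triggers rejection. This uses no structure on $(P,Q)$, so $\errone\leqslant\delta$.

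\textbf{Type II.} Suppose $(P,Q)\in\cM_\cS$ and $\Toff{P}{Q}(\alpha_0)<\falt(\alpha_0)$. We need a single witness $S\in\cS$ with $Q(S^\complement)<\falt(P(S))$. This is the main obstacle, because $\Toff{P}{Q}$ at $\alpha_0$ may be attained only by a randomized mixture of two Neyman--Pearson sets.

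The approach is to use \Cref{prop:reduction}: the point $(\alpha_0,\Toff{P}{Q}(\alpha_0))$ lies on a segment between two limit points $(P(S_i),Q(S_i^\complement))$, and these are attained because the level sets $\{q>\lambda p\}$ and $\{q\geqslant\lambda p\}$ are both in the closure. The level set $\{q\geqslant \lambda p\}$ is a monotone limit of sets $\{q>\lambda' p\}$, whose probabilities converge. Because $\falt$ is convex, it lies below its chord. If both endpoints satisfied $Q(S_i^\complement)\geqslant\falt(P(S_i))$, convexity would give $\Toff{P}{Q}(\alpha_0)\geqslant\falt(\alpha_0)$, a contradiction. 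Hence some endpoint is a strict violation, and by approximation a set $S\in\cS$ satisfies $Q(S^\complement)<\falt(P(S))$.

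Now chain the inequalities on $E$, using the monotonicity of $\hup$ and the Galois property $\hup(s)<t\iff s<\hinv(t)$, valid for continuous non-decreasing $\hup$. The inequalities on $E$ give $P_n(S)\leqslant\hup(P(S))$, so $\hup(P_n(S))\leqslant\hupup(P(S))$, and hence
\[
\fnull(\hup(P_n(S)))\geqslant\fnull(\hupup(P(S))).
\]
On the other side, $\hup(Q_n(S^\complement))\leqslant\hupup(Q(S^\complement))$. Since $Q(S^\complement)<\falt(P(S))\leqslant\hinvinv\circ\fnull\circ\hupup(P(S))$ by \eqref{eq:general-sep}, applying the Galois property twice gives $\hupup(Q(S^\complement))<\fnull(\hupup(P(S)))$. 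Therefore the test rejects on $E$, and $\errtwo\leqslant\delta$.

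The delicate points are the strictness bookkeeping with the generalized inverse and the attainment of the boundary points. For the latter we may instead take a near-optimal point of the closed convex hull and use the strictness margin.
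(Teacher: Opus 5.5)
Your proposal is correct and follows the paper's proof. It uses the same good event (the normalized VC bound applied to $\cS$ and $\cS^\complement$ for both samples, with the complements supplying the $V(t)=\min\{t,1-t\}$ form of $\hup$), the same assumption-free type~I chain, the paper's two-vertex Neyman--Pearson convexity argument for a witness (its Witness Lemma, including the approximation of $\{q\geqslant\lambda p\}$ by $\{q>\lambda' p\}$), and the same type~II chain. The only difference is cosmetic: you apply $\hup(s)<t\iff s<\hinv(t)$ twice directly, whereas the paper routes the chain through the surrogate $\widetilde{\fnull}=\hinv\circ\fnull\circ\hup$.
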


\noindent See \Cref{sec:proof-general} for the proof.
\medskip

\Cref{thm:general} is the central positive result of this section. It
exhibits the same asymmetry that already appeared in the MLR case, but now in
full generality. Type~I error control uses only uniform concentration over
\(\mathcal S\). As a result, it is entirely assumption-free: the test remains
valid for arbitrary pairs \((P,Q)\), even when the model is misspecified and
the pair is not \(\mathcal S\)-attainable.

Type~II error control, by contrast, requires attainability. This is
unavoidable. If a true violation of the benchmark cannot be witnessed by a set
in \(\mathcal S\), then no search over \(\mathcal S\) can be guaranteed to
detect it. Thus, the reduction and complexity steps play genuinely different
roles: attainability identifies the right search space, while finite VC
dimension makes that search space estimable.

\subsection{Confidence bands for the trade-off curve}

The testing procedure from \Cref{thm:general} can be inverted to obtain
simultaneous confidence bands for the entire unknown trade-off function. As in
the testing problem, the key idea is to combine uniform concentration over the
class \(\cS\) with the representation of \(\Toff{P}{Q}\) in terms of the pairs
\((P(S),Q(S^\complement))\).

Define the lower confidence map
\[
h_-(t)=t-\sqrt{\tauND\,V(t)}-\tauND,
\]
which is parallel to the upper confidence map $\hup$.
Using these bounds, define for each \(\alpha\in[0,1]\)
\begin{align*}
\hat L(\alpha)
&\coloneqq
\Bigl[
\inf_{S\in\cS:\, h_-(P_n(S))\leqslant \alpha}
h_-\bigl(Q_n(S^\complement)\bigr)
\Bigr]\vee 0,\\
\hat U(\alpha)
&\coloneqq
\Bigl[
\inf_{S\in\cS:\, h_+(P_n(S))\leqslant \alpha}
h_+\bigl(Q_n(S^\complement)\bigr)
\Bigr]\wedge (1-\alpha).
\end{align*}
Let \(\hat L_{\mathrm{GCM}}\) denote the greatest convex minorant of \(\hat L\).
The next corollary shows that these envelopes provide a simultaneous
confidence band for the whole curve.

\begin{corollary}\label{cor:confidence-bands}
Fix \(\delta\in(0,1)\) and assume \(\operatorname{VC}(\mathcal S)<\infty\).
Then, with probability at least \(1-\delta\),
\[
\Toff{P}{Q}(\alpha) \leqslant \hat U(\alpha)
\qquad
\text{for all }\alpha\in[0,1],
\]
for arbitrary distribution pairs \((P,Q)\). Moreover, if in addition
\((P,Q)\in\mathcal M_{\mathcal S}\), then
\[
\hat L_{\mathrm{GCM}}(\alpha)
\leqslant
\Toff{P}{Q}(\alpha)
\leqslant
\hat U(\alpha)
\qquad
\text{for all }\alpha\in[0,1].
\]
\end{corollary}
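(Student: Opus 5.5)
We place ourselves on the event $\mathcal E$ on which \Cref{lem:nuc} holds simultaneously for $P$ over $\cS$ and for $Q$ over the complement class $\{S^\complement:S\in\cS\}$. The complement class has the same VC dimension as $\cS$, so a union bound gives $\Pr(\mathcal E)\geqslant 1-\delta/2\geqslant 1-\delta$. On $\mathcal E$ we first show, for every $S\in\cS$,
\[
h_-(P_n(S))\leqslant P(S)\leqslant h_+(P_n(S)),
\qquad
h_-(Q_n(S^\complement))\leqslant Q(S^\complement)\leqslant h_+(Q_n(S^\complement)).
\]
The sides involving $\tau+\sqrt{\tau P_n(S)}$ are read off directly from \Cref{lem:nuc}. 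To replace $P_n(S)$ by $V(P_n(S))=\min\{P_n(S),1-P_n(S)\}$, apply the same lemma to the complement event, using $P_n(S^\complement)-P(S^\complement)=-(P_n(S)-P(S))$. This requires $\cS$ to be closed under complements, or else the event $\mathcal E$ must be defined over $\cS\cup\{S^\complement\}$, which costs at most a constant in VC dimension and can be absorbed in $\tauND$. I expect this bookkeeping, rather than any conceptual point, to be the main obstacle. The same inequalities are exactly what drives type~I control in \Cref{thm:general}, so I would reuse that argument verbatim.

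\textbf{Upper bound (assumption-free).} Fix $\alpha$ and any $S\in\cS$ with $h_+(P_n(S))\leqslant\alpha$. On $\mathcal E$ we have $P(S)\leqslant h_+(P_n(S))\leqslant\alpha$. The test $\mathbf 1_S$ therefore has level $\alpha$, so by \Cref{def:tradeoff}
\[
\Toff{P}{Q}(\alpha)\leqslant Q(S^\complement)\leqslant h_+(Q_n(S^\complement)).
\]
Taking the infimum over all such $S$ and using $\Toff{P}{Q}(\alpha)\leqslant 1-\alpha$ gives $\Toff{P}{Q}\leqslant\hat U$ for all $\alpha$ at once. If no such $S$ exists, the infimum is $+\infty$ and $\hat U(\alpha)=1-\alpha$, which is still a valid bound. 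No attainability assumption is used.

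\textbf{Lower bound (under attainability).} Assume $(P,Q)\in\cM_\cS$. By \Cref{prop:reduction}, $T:=\Toff{P}{Q}$ is the lower boundary of the closed convex hull of the points $(P(S),Q(S^\complement))$, $S\in\cS$. In particular $T(P(S))\leqslant Q(S^\complement)$ for every $S\in\cS$, and $T$ is convex and non-increasing.

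Now fix $\alpha$ and any $S\in\cS$ with $h_-(P_n(S))\leqslant\alpha$. We want $h_-(Q_n(S^\complement))\leqslant T(\alpha)$, but the inequality $P(S)\geqslant h_-(P_n(S))$ does not by itself let us compare $Q(S^\complement)$ with $T(\alpha)$, so this direction needs a different argument. The cleaner route is to work pointwise with the lower boundary instead. By \Cref{prop:reduction} together with attainability, for each $\alpha$ there is a Neyman--Pearson set $S_\lambda\in\cS$, or a convex combination of two such sets, that achieves $T$. Suppose first that $P(S_\lambda)=\alpha_0\leqslant\alpha$ and $T(\alpha_0)=Q(S_\lambda^\complement)$. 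On $\mathcal E$, $h_-(P_n(S_\lambda))\leqslant P(S_\lambda)\leqslant\alpha$, so $S_\lambda$ is feasible in the definition of $\hat L(\alpha)$ and
\[
\hat L(\alpha_0)\leqslant h_-(Q_n(S_\lambda^\complement))\leqslant Q(S_\lambda^\complement)=T(\alpha_0).
\]
Thus $\hat L\leqslant T$ at every $\alpha_0\in\{P(S):S \text{ a level set}\}$, and at endpoints $\hat L=0\leqslant T$.

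Between consecutive attained points $T$ is linear, by the convex-hull structure and the use of randomized tests. Hence $T$ is a convex function that lies above $\hat L$ at the attained points and is linear interpolation in between. The point $\alpha=1$ needs a separate check: take $S=\mathcal X$ if $\mathcal X\in\cS$, and otherwise use $\hat L(1)=0$ since $h_-(Q_n(\emptyset))\leqslant0$. Handling the remaining points requires care, because $\hat L$ itself need not lie below $T$ off the attained set. The fix is to observe that the greatest convex minorant is dominated by any convex function lying above $\hat L$ on a set whose convex hull covers $[0,1]$, once we also use that $\hat L$ is non-increasing. Concretely, on each gap $(\alpha_0,\alpha_1)$ the function $\hat L$ is at most $\hat L(\alpha_0)$, which is bounded by the chord value…

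This gap is exactly where I would concentrate effort. The first attempt would be to show directly that $\hat L(\alpha)\leqslant T(\alpha)$ for every $\alpha$. This is done by taking the randomized optimal test as a mixture of $S_{\lambda}$ and $S_{\lambda'}$ and choosing whichever of the two component sets has $P(S)\leqslant\alpha$, namely the smaller one. That yields $\hat L(\alpha)\leqslant Q(S_\lambda^\complement)$. This quantity exceeds $T(\alpha)$ in general, so the pointwise bound alone fails. One then concludes via the GCM: $\hat L_{\mathrm{GCM}}$ is the supremum of convex functions below $\hat L$, and on the attained set it is bounded by $T$. Convexity of $\hat L_{\mathrm{GCM}}$ combined with linearity of $T$ on each gap then gives $\hat L_{\mathrm{GCM}}\leqslant T$ everywhere, since a convex function that lies below a linear function at the two endpoints of an interval lies below it throughout.
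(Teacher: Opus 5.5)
Your upper bound is the paper's argument verbatim. For the lower bound you take a genuinely different route, and it works, but it needs one extra step at gap endpoints.

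\textbf{How the paper argues.} It never compares $\hat L$ with $T:=\Toff{P}{Q}$ directly. It sets $F(\alpha)=\inf_{S\in\mathcal S:\,P(S)\leqslant\alpha}Q(S^\complement)$. On the good event, every $S$ with $P(S)\leqslant\alpha$ is feasible for $\hat L(\alpha)$, since $h_-(P_n(S))\leqslant P(S)$, and it satisfies $h_-(Q_n(S^\complement))\leqslant Q(S^\complement)$. Hence $\hat L\leqslant F$ pointwise, with no structural assumption. Attainability enters only through $T=\mathrm{GCM}(F)$ (\Cref{prop:reduction} with \Cref{lem:convex-hull-gcm}), and maximality of the GCM finishes.

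\textbf{How you argue.} You prove $\hat L\leqslant T$ only at abscissae $P(\{q>\lambda p\})$ of strict Neyman--Pearson sets, which attainability places in $\mathcal S$. You then interpolate: $T$ is affine on each gap of this set and $\hat L_{\mathrm{GCM}}$ is convex, so it stays below $T$ there. This bypasses the convex-hull/GCM identification and makes explicit that only strict level sets are used. The paper's route is shorter and separates concentration from structure more cleanly.

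\textbf{The step you must add.} You have to do by hand the closure bookkeeping that the paper hides in \Cref{prop:reduction}. The right endpoint $b=P(\{q\geqslant\lambda^\ast p\})$ of a gap is in general not $P(S)$ for any $S\in\mathcal S$. So you must pass to the limit along attained points $P(\{q>\lambda p\})\downarrow b$ as $\lambda\uparrow\lambda^\ast$. This uses continuity of $T$ and of the finite convex function $\hat L_{\mathrm{GCM}}$ on $(0,1)$. The endpoints are covered directly: $\lambda=\infty$ (the set $\{p=0\}$) at $\alpha=0$, and $\mathcal X\in\mathcal S$ at $\alpha=1$.

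\textbf{Other loose ends.}
\begin{itemize}
\item Your claim that $\hat L=0$ at $\alpha=0$ is false in general. When $P=Q$ and $\tau_{n,\delta}$ is small, $\hat L(0)$ is close to $1$ on the good event. This is harmless, since $\alpha=0$ is an attained abscissa.
\item The intermediate claim that the GCM lies below every convex function dominating $\hat L$ on a set whose hull is $[0,1]$ is false. Take $\hat L$ constant and a strictly convex $g$ touching it only at $0$ and $1$. What makes the interpolation work is the affineness of $T$ on gaps, which your final version correctly uses.
\item Your initial worry was misdirected. You never need $h_-(Q_n(S^\complement))\leqslant T(\alpha)$ for every feasible $S$, only a bound on the infimum. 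The feasibility step you already use at attained points gives the paper's $\hat L\leqslant F$ everywhere.
\item You cannot absorb a VC inflation into $\tau_{n,\delta}$, because it is fixed inside $h_\pm$. Instead apply \Cref{lem:nuc} separately to $\mathcal S$ and to $\{S^\complement:S\in\mathcal S\}$ (same VC dimension), for both $P$ and $Q$, each at failure level $\delta/4$. This yields the $V$-normalized two-sided bounds with total failure probability $\delta$, exactly as in the paper.
\end{itemize}
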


\noindent The role of the convex minorant is natural: a trade-off function is
always convex, so projecting the raw lower envelope \(\hat L\) onto the class
of convex functions gives the sharpest lower band compatible with the shape
constraint. See \Cref{sec:proof-confidence-bands} for the proof.
\medskip

Thus the same asymmetry as in \Cref{thm:general} appears at the level of
confidence bands. The upper band is fully assumption-free: it is valid for
arbitrary \((P,Q)\) as long as \(\VC(\cS)<\infty\). The lower band, by
contrast, requires \(\cS\)-attainability, because only under that structural
condition can the trade-off function be recovered from the probabilities of
sets in \(\cS\). In this sense, the confidence-band result is the direct
inversion of the testing theorem.

\subsection{Adaptive testing over nested VC classes}

The procedure in \Cref{thm:general} requires knowledge of the exact VC
class \(\cS\). In practice, however, one is often given a nested sequence of
classes
\begin{align*}
\cS_1\subseteq\cS_2\subseteq \cS_3\subseteq\cdots
\end{align*}
with strictly increasing VC dimensions. For example, in \(\mathbb R\), the class of
unions of at most \(k\) intervals forms such a sequence as \(k\) increases; see~\Cref{eq:union-interval}.

Given a pair \((P,Q)\) known to be \(\cS_K\)-attainable for some
\(K\in\mathbb N\), one may run the testing procedure from
\Cref{thm:general} with \(\cS_K\). However, if \((P,Q)\) is in fact
\(\cS_k\)-attainable for some much smaller \(k\ll K\), then the procedure
based on \(\cS_K\) can be suboptimal in its detection boundary. This raises a
natural question: given an unknown pair \((P,Q)\) lying in a nested sequence of
VC classes \(\cS_1\subseteq\cS_2\subseteq\cdots\), can we adapt to the smallest
class \(\cS_k\) that makes \((P,Q)\) \(\cS_k\)-attainable? The following
result gives an affirmative answer.

\begin{corollary}\label{cor:adaptive}
Fix $\delta\in(0,1)$ and a nested sequence
$\cS_1\subseteq\cS_2\subseteq\cdots$ of strictly increasing VC dimension. Let
$\psi^{(k)}_{n,6\delta/(\pi^2k^2)}$ be the testing procedure in
\Cref{thm:general} specialized to \(\cS_k\) with tolerance level
\(6\delta/(\pi^2k^2)\). Consider the test
\begin{align*}
\tilde{\psi}_{n,\delta}=\max_{k\in \bbN}\psi^{(k)}_{n,6\delta/(\pi^2k^2)}.
\end{align*}
Then we have:
\begin{enumerate}[label=(\roman*)]
\item \textbf{Type~I error control (assumption-free).}
\[
\errone(\tilde{\psi}_{n,\delta})
=
\sup_{(P,Q):\ \Toff{P}{Q}\succeq \fnull}
(P^n\times Q^n)(\tilde{\psi}_{n,\delta})
\;\leqslant\;
\delta.
\]

\item \textbf{Adaptive type~II error control (under $\cS_k$-attainability).}
For each \(k\in \bbN\), if \(\fnull\) and \(\falt\) satisfy for all
\(\alpha \in [0,1]\)
\begin{align*}
\falt(\alpha)\leqslant \hinvinv \circ \fnull\circ \hupup(\alpha),
\end{align*}
where \(\hup\) and \(\hinv\) are defined as in \Cref{thm:general} with
\(\tauND\) replaced by
\begin{align*}
\tilde{\tau}^{(k)}_{n,\delta}
=
\frac{4\midb{\VC(\cS_k)\log(2n)+\log(64k^2/\delta)}}{n},
\end{align*}
then
\[
\errtwo(\cM_{\cS_k};\, \tilde{\psi}_{n,\delta})
=
\sup_{(P,Q)\in\cM_{\cS_k}:\ \Toff{P}{Q}\nsucceq \falt}
(P^n\times Q^n)(1-\tilde{\psi}_{n,\delta})
\;\leqslant\;
\delta.
\]
\end{enumerate}
\end{corollary}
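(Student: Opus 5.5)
The plan is to obtain both parts from \Cref{thm:general}, applied to each $\cS_k$ separately, together with a union bound over $k$.

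\textbf{Type~I error.} For each $k$, the test $\psi^{(k)}_{n,6\delta/(\pi^2k^2)}$ is the procedure \eqref{eq:general-test} run with class $\cS_k$ at tolerance $\delta_k \coloneqq 6\delta/(\pi^2k^2)$. Part (i) of \Cref{thm:general} is assumption-free, so for every pair $(P,Q)$ with $\Toff{P}{Q}\succeq\fnull$ we have $(P^n\times Q^n)(\psi^{(k)}=1)\leqslant\delta_k$. The maximum test rejects exactly when some $\psi^{(k)}$ rejects. A union bound therefore gives
\[
(P^n\times Q^n)(\tilde\psi_{n,\delta}=1)\leqslant\sum_{k\geqslant1}\frac{6\delta}{\pi^2k^2}=\delta ,
\]
since $\sum_k k^{-2}=\pi^2/6$. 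There is one technical point: the event $\{\tilde\psi=1\}$ is a countable union of the events $\{\psi^{(k)}=1\}$. It is measurable because each of those events is, and measurability of each is already implicit in \Cref{thm:general}.

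\textbf{Type~II error.} Fix $k$ and a pair $(P,Q)\in\cM_{\cS_k}$ with $\Toff{P}{Q}\nsucceq\falt$. Since $\tilde\psi\geqslant\psi^{(k)}$, we have $1-\tilde\psi\leqslant 1-\psi^{(k)}$. It therefore suffices to bound the type~II error of the single test $\psi^{(k)}$. I apply part (ii) of \Cref{thm:general} with class $\cS_k$ and tolerance $\delta_k$. The corresponding $\tau$ is
\[
\frac{4\bigl[\VC(\cS_k)\log(2n)+\log(32/\delta_k)\bigr]}{n},
\qquad
\frac{32}{\delta_k}=\frac{32\pi^2k^2}{6\delta}\approx\frac{52.6\,k^2}{\delta}\leqslant\frac{64k^2}{\delta}.
\]
Hence this $\tau$ is at most $\tilde\tau^{(k)}_{n,\delta}$.

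\textbf{Main obstacle.} The test $\psi^{(k)}$ is built with the smaller $\tau_k$, while the separation hypothesis is stated with the larger $\tilde\tau^{(k)}_{n,\delta}$. I must show that the hypothesis with $\tilde\tau$ implies condition \eqref{eq:general-sep} with $\tau_k$, which is a monotonicity argument in $\tau$. For each fixed $t$, the map $\tau\mapsto\hup(t)$ is nondecreasing, and so the generalized inverse $\hinv(t)$ is nonincreasing in $\tau$. Since $\fnull$ is nonincreasing, enlarging $\tau$ gives:
\begin{enumerate}[label=(\alph*)]
\item $\hupup(\alpha)$ increases;
\item $\fnull\circ\hupup(\alpha)$ decreases;
\item applying $\hinvinv$, which is nondecreasing in its argument and pointwise nonincreasing in $\tau$, decreases the value further.
\end{enumerate}
Thus the right-hand side of \eqref{eq:general-sep} with $\tilde\tau$ is at most the one with $\tau_k$. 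The assumed inequality with $\tilde\tau$ then implies the one with $\tau_k$, and \Cref{thm:general}(ii) yields a type~II error of at most $\delta_k\leqslant\delta$.

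If one prefers to avoid this monotonicity argument, an alternative is to rerun the proof of \Cref{thm:general} directly. On the event of \Cref{lem:nuc} at level $\delta_k$, the inequalities hold with $\tau_k$, hence also with the larger $\tilde\tau^{(k)}_{n,\delta}$. The deterministic power argument then goes through with $\tilde\tau$, as long as the test's own threshold uses a $\tau$ no larger than the one in the separation condition. I expect verifying this monotonicity carefully, especially that $\hinv$ behaves well at the truncation points $0$ and $1$, to be the only delicate step.
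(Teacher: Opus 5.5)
Your proposal is correct and follows the paper's own proof: a union bound with $\sum_k 6\delta/(\pi^2k^2)=\delta$ for type~I, and for type~II the pointwise bound $1-\tilde\psi_{n,\delta}\leqslant 1-\psi^{(k)}_{n,6\delta/(\pi^2k^2)}$ combined with \Cref{thm:general}(ii) on $\cS_k$. Your monotonicity-in-$\tau$ step is sound, and the paper uses it without comment: since $32\pi^2k^2/(6\delta)\leqslant 64k^2/\delta$, the separation condition stated with $\tilde\tau^{(k)}_{n,\delta}$ implies the one with the test's own smaller $\tau$.
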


Comparing \Cref{cor:adaptive} with
\Cref{thm:general}, we see that the cost of adaptation appears through
the replacement of \(\log(32/\delta)\) by \(\log(64k^2/\delta)\). Since the
sequence \(\{\cS_k\}\) is assumed to have increasing VC dimension, one must
have \(\VC(\cS_k)\geqslant k\) for all \(k\in\mathbb N\). Therefore,
\[
\log(64k^2/\delta)-\log(32/\delta)=O(\log(\VC(\cS_k))),
\]
which can be absorbed into the leading term \(\VC(\cS_k)\log(2n)\). As a
result, the adaptive procedure incurs only a negligible order-wise inflation in
the detection boundary.

\section{Optimal local separation rates}\label{sec:rates}

\Cref{thm:general} gives a finite-sample test and an exact separation
condition for its power. That condition, however, is expressed through the
confidence map \(h_+\) and its generalized inverse, and therefore does not
immediately reveal the statistical difficulty of the problem. In this section
we translate the condition into an interpretable local detection boundary and
ask whether that boundary is tight.

The resulting boundary has a simple interpretation. A violation of the
benchmark at level \(\alpha\) can be detected only up to the uncertainty in two
coordinates: the type~I coordinate \(\alpha\), and the corresponding type~II
coordinate \(\fnull(\alpha)\). We show that this two-sided local uncertainty is
not merely an artifact of our proof; in the half-interval model it is also
necessary, up to logarithmic factors.

The following local modulus captures these two sources of uncertainty. For a
trade-off function \(f\) and \(r\geqslant 0\), define
\begin{align}
\Delta_f(\alpha;r)
\coloneqq 
f(\alpha)-f\!\bigl(\alpha+r+\sqrt{rV(\alpha)}\bigr)
+
\Bigl(\sqrt{r\,V(f(\alpha))}\Bigr)\wedge f(\alpha),
\label{eq:local-modulus}
\end{align}
where, as before, \(V(t)=\min\{t,(1-t)_+\}\), and we extend \(f\) by setting
\(f(\alpha)=0\) for \(\alpha>1\).

The first term is the input-side uncertainty: it measures how much the benchmark
can change when the type~I level is shifted by the local empirical error
\(r+\sqrt{rV(\alpha)}\). The second term is the output-side uncertainty in
estimating the type~II coordinate \(f(\alpha)\). Thus
\(\Delta_f(\alpha;r)\) is the local amount by which the alternative benchmark
must fall below \(f\) in order for a violation at level \(\alpha\) to be
statistically visible at scale~\(r\).

\subsection{Sufficient local separation}

The first result shows that the abstract separation condition in
\eqref{eq:general-sep} is implied by a simple local gap condition. In words, it
is enough for the benchmark gap \(\fnull(\alpha)-\falt(\alpha)\) to dominate the
local modulus of \(\fnull\) at the estimation scale \(\tau_{n,\delta}\).

\begin{proposition}[Sufficient local separation]
\label{prop:upper-rate}
There exists a universal constant \(C>0\) such that
\eqref{eq:general-sep} is implied by
\begin{equation}
\fnull(\alpha)-\falt(\alpha)
\;\geqslant\;
C\,\Delta_{\fnull}(\alpha;\tau_{n,\delta})
\qquad\forall \alpha\in[0,1].
\label{eq:upper-rate-clean}
\end{equation}
\end{proposition}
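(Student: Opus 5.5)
Write $\tau=\tauND$. We need to show that \eqref{eq:upper-rate-clean} implies $\falt(\alpha)\leqslant \hinvinv\circ\fnull\circ\hupup(\alpha)$ for every $\alpha$. The plan is to bound the loss incurred by the inner composition $\fnull\circ\hupup$, which gives the input-side term of $\Delta_{\fnull}$, and then the loss incurred by the outer $\hinvinv$, which gives the output-side term. Set $\beta=\fnull(\hupup(\alpha))$. It suffices to prove
\[
\fnull(\alpha)-\hinvinv(\beta)\;\leqslant\; C\,\Delta_{\fnull}(\alpha;\tau),
\]
because then $\falt(\alpha)\leqslant \fnull(\alpha)-C\Delta_{\fnull}(\alpha;\tau)\leqslant\hinvinv(\beta)$. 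If $\fnull(\alpha)\leqslant C\Delta$ there is nothing to prove, since $\falt\geqslant0$.

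\textbf{Inner step.} The first task is to show $\hupup(\alpha)\leqslant \alpha+c_1(\tau+\sqrt{\tau V(\alpha)})$ for a universal constant $c_1$, where $c_1\tau$ may exceed $1$ because of the convention $\fnull=0$ beyond $1$. Set $u=\hup(\alpha)\leqslant \alpha+\tau+\sqrt{\tau V(\alpha)}$. Using $V(u)\leqslant V(\alpha)+|u-\alpha|$, which holds because $V$ is $1$-Lipschitz, we get $\sqrt{\tau V(u)}\leqslant \sqrt{\tau V(\alpha)}+\sqrt{\tau(\tau+\sqrt{\tau V(\alpha)})}$. The cross term is bounded by AM--GM, giving $\sqrt{\tau\sqrt{\tau V}}\leqslant \tfrac12(\tau+\sqrt{\tau V})$, so the claim follows with roughly $c_1\leqslant 4$. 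Next, by convexity and monotonicity of $\fnull$, for $c\geqslant1$ we have $\fnull(\alpha)-\fnull(\alpha+c r)\leqslant c\,(\fnull(\alpha)-\fnull(\alpha+r))$: the chord slope over $[\alpha,\alpha+cr]$ is at least as steep as needed, or more simply the decrease is superadditive in reverse along the convex curve. When $\alpha+cr>1$ we use $\fnull=0$ together with the cap $\fnull(\alpha)\leqslant C\Delta$. Writing $r=\tau+\sqrt{\tau V(\alpha)}$, this yields
\[
\fnull(\alpha)-\beta\leqslant c_1\bigl[\fnull(\alpha)-\fnull(\alpha+r)\bigr].
\]

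\textbf{Outer step.} The second task is to lower bound $\hinv(t)$. Since $\hup(s)\leqslant s+\tau+\sqrt{\tau V(s)}$, any $s$ with $\hup(s)\geqslant t$ satisfies $s\geqslant t-\tau-\sqrt{\tau V(s)}$. Plugging in $V(s)\leqslant V(t)+|s-t|$ and solving the resulting quadratic inequality in $\sqrt{t-s}$ gives $\hinv(t)\geqslant t-c_2(\tau+\sqrt{\tau V(t)})$. Applying this twice, and controlling $V$ at the intermediate point in the same Lipschitz way, gives $\hinvinv(\beta)\geqslant \beta-c_3(\tau+\sqrt{\tau V(\beta)})$. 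Finally $V(\beta)\leqslant V(\fnull(\alpha))+(\fnull(\alpha)-\beta)$, so
\[
\sqrt{\tau V(\beta)}\leqslant\sqrt{\tau V(\fnull(\alpha))}+\tfrac12\bigl(\tau+\fnull(\alpha)-\beta\bigr)\cdot\text{const}.
\]
Combining the two steps bounds $\fnull(\alpha)-\hinvinv(\beta)$ by a constant times
\[
\bigl[\fnull(\alpha)-\fnull(\alpha+r)\bigr]+\sqrt{\tau V(\fnull(\alpha))}+\tau.
\]

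\textbf{Main obstacle.} The hardest part will be absorbing the stray additive $\tau$ into $\Delta_{\fnull}(\alpha;\tau)$, since $\Delta$ has no standalone $\tau$ term. The plan is to split into cases. If $\fnull(\alpha)\leqslant C'\tau$, the cap term $\sqrt{\tau V(\fnull(\alpha))}\wedge\fnull(\alpha)$ essentially equals $\fnull(\alpha)$ up to constants, and $\fnull(\alpha)\leqslant C\Delta$ makes the bound trivial. If $\fnull(\alpha)\geqslant 1-C'\tau$, then $\alpha$ is tiny and the curve is nearly $1-\alpha$. Here we should use $\fnull(\alpha)-\fnull(\alpha+r)\gtrsim \tau$, which should follow from convexity combined with $\fnull(\alpha+r)\leqslant 1-\alpha-r$, though I expect this to need care. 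Otherwise $\tau\leqslant\sqrt{\tau V(\fnull(\alpha))}$ up to constants and the $\tau$ is absorbed directly. The same case split also handles the $\wedge\fnull(\alpha)$ truncation in $\Delta$.
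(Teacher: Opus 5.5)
Your proposal is correct and follows essentially the same route as the paper. Both arguments use the same pieces:
\begin{itemize}
\item the forward bound on $\hupup$ and the backward bound on $\hinvinv$, obtained from the $1$-Lipschitz property of $V$ and AM--GM;
\item the transfer $V(\beta)\le V(\fnull(\alpha))+(\fnull(\alpha)-\beta)$;
\item the convexity rescaling $\fnull(\alpha)-\fnull(\alpha+cr)\le c\,[\fnull(\alpha)-\fnull(\alpha+r)]$, which needs no special treatment past $1$ because the zero-extended $\fnull$ is convex;
\item the same three-case absorption of the stray $\tau$. Take $C'=1$, so the cases are $\fnull(\alpha)<\tau$, $V(\fnull(\alpha))\ge\tau$, and $\fnull(\alpha)>1-\tau$.
\end{itemize}

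The only difference is ordering. The paper treats the last case before undoing the dilation, using $\fnull(\alpha)-\fnull(\alpha+C_0 r)\ge\tau$ from $\fnull(x)\le(1-x)_+$. You rescale first, so you need concavity once more, e.g.\ $\fnull(\alpha)-\fnull(\alpha+r)\ge\tfrac13[\fnull(\alpha)-\fnull(\alpha+3r)]\ge\tau/3$. This works.
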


\noindent See \Cref{sec:proof-separation} for the proof.
\medskip

The interpretation is simple: the benchmark gap must dominate the local
statistical uncertainty at the effective estimation scale
\(\tau_{n,\delta}\). The dependence on the witness class \(\cS\) enters only
through this scale. To understand the implication of the condition, it remains
to evaluate \(\Delta_{\fnull}(\alpha;\tau_{n,\delta})\) for concrete benchmark
curves. We do this after establishing that the same local modulus is also
necessary, up to logarithmic factors, for the half-interval model.

\subsection{Necessity in the half-interval model}

We next ask whether the modulus in \Cref{prop:upper-rate} reflects
a real statistical barrier. 
The answer is affirmative already for the half-interval class
\(\mathcal S_{\mathrm{MLR}}\), which has constant VC dimension.
Thus the local form of the upper bound is not an
artifact of the proof.

\begin{theorem}[Necessary local separation in the half-interval model]
\label{thm:new-lower-bound}
For every \(\delta\in(0,1/2)\), there exists a constant \(C_\delta>0\) such
that if for some \(\alpha\in[0,1]\),
\begin{equation}
\fnull(\alpha)-\falt(\alpha)
<
C_\delta\,\Delta_{\fnull}(\alpha;1/n),
\label{eq:lower-rate-clean}
\end{equation}
then
$
\inf_{\psi}
\Bigl\{
\errsum{\psi}
\Bigr\}
\geqslant 2\delta.
$
\end{theorem}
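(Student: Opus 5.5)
The plan is a two-point (or two-mixture) Le Cam argument inside $\cM_{\smlr}$. The error sum in the statement is presumably over $\cM_{\smlr}$. If we find $(P_0,Q_0)$ with $\Toff{P_0}{Q_0}\succeq\fnull$ and $(P_1,Q_1)\in\cM_{\smlr}$ with $\Toff{P_1}{Q_1}\not\succeq\falt$, then every test satisfies $\errsum{\psi}\geqslant 1-\dTV(P_0^n\times Q_0^n,P_1^n\times Q_1^n)$. It therefore suffices to make this total variation at most $1-2\delta$. Via Hellinger tensorization, this reduces to $n\,H^2(P_0,P_1)+n\,H^2(Q_0,Q_1)\leqslant c_\delta$ for a small constant $c_\delta$.

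Since $\Delta_{\fnull}(\alpha;1/n)$ is a sum of two terms, we have $\Delta\leqslant 2\max$ of the two. We split into two cases according to which term dominates, and build a separate pair for each.

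First, a common base pair. We take $(P_0,Q_0)$ to be a pair of continuous MLR distributions whose trade-off function is exactly $\fnull$, realized on $\mathbb R$ with half-line level sets. The standard realization is $P_0=\mathrm{Unif}[0,1]$ and $Q_0$ with CDF $1-\fnull(1-x)$ (so $Q_0$ has decreasing density $-\fnull'$ reflected). The likelihood ratio is then monotone and $T(P_0,Q_0)=\fnull$. Atoms at endpoints, if $\fnull(0)<1$, are smoothed away or handled by a limiting argument.

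\textbf{Case 1, output side}, where $\sqrt{V(\fnull(\alpha))/n}\wedge\fnull(\alpha)$ dominates. Keep $P_1=P_0$. Move mass of $Q$ across the threshold point $a$ with $P_0(S_a)=\alpha$ for the NP set $S_a$: lower $Q(S_a^{\complement})$ from $\fnull(\alpha)$ by $\eta\asymp\sqrt{V(\fnull(\alpha))/n}\wedge\fnull(\alpha)$. This is done by scaling the $Q$ density by $(1-\eta/\fnull(\alpha))$ on $S_a^{\complement}$ and by a compensating factor on $S_a$. Monotonicity of the likelihood ratio is preserved, since we only multiply by a step that is larger on the high-ratio side. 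The new curve at $\alpha$ is $\fnull(\alpha)-\eta<\falt(\alpha)$ once $C_\delta$ is small. The Hellinger cost is that of the Bernoulli pair $\mathrm{Ber}(\fnull(\alpha))$ versus $\mathrm{Ber}(\fnull(\alpha)-\eta)$, which is $\lesssim\eta^2/V(\fnull(\alpha))\lesssim 1/n$. When $\fnull(\alpha)$ is near $0$, the truncation $\wedge\fnull(\alpha)$ keeps $\eta\leqslant\fnull(\alpha)$.

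\textbf{Case 2, input side}, where $\fnull(\alpha)-\fnull(\alpha+r+\sqrt{rV(\alpha)})$ dominates with $r=1/n$. Now keep $Q_1=Q_0$ and perturb $P$. We shrink the $P$-mass of the half-line $S_{a'}$ with $P_0(S_{a'})=\alpha'=\alpha+c(r+\sqrt{rV(\alpha)})$ down to $\alpha$, rescaling $P$'s density by constant factors on $S_{a'}$ and on its complement. Under $(P_1,Q_0)$, the set $S_{a'}$ has type I error $\alpha$ and type II error $\fnull(\alpha')$. Hence $T(P_1,Q_0)(\alpha)\leqslant\fnull(\alpha')\leqslant\fnull(\alpha)-C_\delta^{-1}\cdot(\ldots)$, which falls below $\falt(\alpha)$. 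This requires the case hypothesis and replacing the constant $c$ inside $\fnull$ by $1$; convexity of $\fnull$ gives $\fnull(\alpha)-\fnull(\alpha+cs)\geqslant c(\fnull(\alpha)-\fnull(\alpha+s))$ for $c\leqslant1$. Because the modified likelihood ratio is the old one times a two-valued step, monotone in the same direction, $(P_1,Q_0)$ stays in $\cM_{\smlr}$. The Hellinger cost is that of $\mathrm{Ber}(\alpha')$ versus $\mathrm{Ber}(\alpha)$, which is $\lesssim (\alpha'-\alpha)^2/V(\alpha)+(\alpha'-\alpha)\lesssim c^2/n+c/n$, small for small $c$. Near $\alpha\approx 1$, where $V(\alpha)$ uses $1-\alpha$, the argument is symmetric.

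I expect the main obstacle to be the bookkeeping at the boundary regimes: $\alpha$ or $\fnull(\alpha)$ near $0$ or $1$, where the additive $r$ term, rather than $\sqrt{rV}$, governs the Bernoulli Hellinger bound, and $\alpha'>1$, where $\fnull$ is extended by $0$. The latter needs all $P$-mass moved, which is costly. There I would cap $\alpha'$ at $1$ and use $\fnull(1)=0$. I would also check that the constants $c$ and $C_\delta$ can be chosen uniformly in $\alpha$ and $\fnull$, with $C_\delta\to0$ as $\delta\to1/2$.
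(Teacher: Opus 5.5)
Your proposal is correct and has the same skeleton as the paper's proof. Both use a two-point Le Cam bound inside $\mathcal M_{\smlr}$ and split $\Delta_{\fnull}(\alpha;1/n)$ into an input term and an output term, each handled by its own alternative. Both use the base pair $P=\mathrm{Unif}[0,1]$ versus a law $R$ with density $-(\fnull)'_+$ on $[0,1]$. On the output side you use exactly the paper's perturbation: rescale $R$ by two constants across the threshold, at cost $\lesssim\theta^2/V(\fnull(\alpha))$.

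The genuine difference is on the input side. Write $s_n(\alpha)=1/n+\sqrt{V(\alpha)/n}$. The paper rescales $P$ across $\alpha$ itself, removing mass $\theta\asymp s_n(\alpha)$ from $[0,\alpha]$. This needs $\theta\leqslant V(\alpha)$ and costs $\theta^2/(\alpha(1-\alpha))$, so it only covers $V(\alpha)\geqslant 1/n$. The paper therefore treats $V(\alpha)<1/n$ with a separate, $\alpha$-independent alternative: its trade-off curve is $\fnull(1-g)$ for a privacy-type curve $g$ with parameters of order $n^{-1/2}$ and $n^{-1}$, controlled through a Hellinger-affinity computation.

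You instead rescale across $\alpha'=\alpha+c\,s_n(\alpha)$, so that $(-\infty,\alpha']$ has $P_1$-mass exactly $\alpha$. The removed mass $\alpha'-\alpha$ never exceeds the available mass $\alpha'$. Both $(\sqrt{\alpha'}-\sqrt{\alpha})^2$ and $(\sqrt{1-\alpha}-\sqrt{1-\alpha'})^2$ are at most $\min\{(\alpha'-\alpha)^2/V(\alpha),\,\alpha'-\alpha\}$. This is $\lesssim c/n$ in both regimes: if $V(\alpha)\geqslant 1/n$ then $s_n(\alpha)\leqslant 2\sqrt{V(\alpha)/n}$, and if $V(\alpha)<1/n$ then $s_n(\alpha)\leqslant 2/n$. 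So one construction covers interior and near-$0$ boundary points, which is simpler than the paper's case split. What the paper's boundary alternative buys is uniformity in $\alpha$, which this fixed-$\alpha$ statement does not need.

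Three points need fixing in the write-up.

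\begin{enumerate}
\item Your Bernoulli bound must be the minimum above, not the sum $(\alpha'-\alpha)^2/V(\alpha)+(\alpha'-\alpha)$. For $V(\alpha)\ll 1/n$ the first term alone is far from $O(1/n)$.
\item The case $\alpha+c\,s_n(\alpha)>1$ is not costly. For $c\leqslant 1/4$ it forces $1-\alpha<c\,s_n(\alpha)\leqslant 2c/n$. You cannot rescale on the $P_0$-null set $(1,\infty)$, but you can move mass $1-\alpha$ onto an interval to the right of $1$, where $R$ vanishes. The likelihood ratio stays non-increasing and $\Toff{P_1}{R}(\alpha)=0<\falt(\alpha)$; positivity holds because here the input term of $\Delta_{\fnull}(\alpha;1/n)$ equals $\fnull(\alpha)$, so the case hypothesis gives $\falt(\alpha)>0$. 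The Hellinger cost is $O(1-\alpha)$.
\item $1-\fnull(1-x)$ is not a CDF, since it is decreasing. You want $R((x,\infty))=\fnull(x)$ for $x\in[0,1]$, with the leftover mass $1-\fnull(0)$ spread over an interval to the left of $0$, as in the paper. That smoothing is actually necessary. With an atom at the edge of $[0,1]$, the $\lambda=\infty$ level set $\{p=0\}$ is $\mu$-a.e. the atom, and no open half-line in $\smlr$ reproduces it.
\end{enumerate}
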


\noindent See \Cref{sec:proof-minimax-lower-bound} for the proof.
\medskip

Taken together, \Cref{prop:upper-rate} and
\Cref{thm:new-lower-bound} identify the same benchmark-dependent local
modulus as governing both the upper and lower bounds. The two results differ
only in the noise scale: the upper bound uses
$
\tau_{n,\delta}\asymp
\frac{\VC(\cS)\log n+\log(1/\delta)}{n},
$
whereas the lower bound for the half-interval model uses the parametric scale
\(1/n\). More explicitly, using convexity of \(\fnull\), one obtains
\[
\Delta_{\fnull}(\alpha; \tau_{n,\delta})
\;\leqslant\;
4\bigl[\VC(\cS)\log(2n)+\log(32/\delta)\bigr]\,
\Delta_{\fnull}(\alpha;1/n).
\]
Thus the proposed procedure is rate-sharp
up to logarithmic factors in the half-interval/MLR model, uniformly over
benchmark trade-off functions \(\fnull\).

\subsection{Consequences for common benchmark curves}

We now illustrate how the same modulus produces different local rates depending
on the geometry of the benchmark curve. To simplify the discussion, we suppress
logarithmic factors and write \(\tau_{n,\delta}\asymp \VC(\mathcal S)/n\).

\begin{example}[Identity testing]
\label{ex:gap-linear}
Testing $P=Q$ is equivalent to setting  $\fnull(\alpha)=1-\alpha$. In this case, we have  
\begin{equation}
\Delta_{\fnull}(\alpha;\tauND)
=
\bigl[\tauND+\sqrt{\tauND V(\alpha)}\bigr]\wedge (1-\alpha)
+
\Bigl(\sqrt{\tauND V(1-\alpha)}\Bigr)\wedge (1-\alpha).
\label{eq:Delta-linear}
\end{equation}
This yields a locally adaptive behavior:
\begin{itemize}
\item if \(\alpha\) stays in the interior of \([0,1]\), then
\(V(\alpha)\asymp 1\) and \(V(1-\alpha)\asymp 1\), so
\(\Delta_{\fnull}(\alpha;\tauND)\asymp \sqrt{\tauND}\asymp \sqrt{\frac{\VC(\cS)}{n}}\) ;
\item if \(\alpha\) is close to \(0\), then
\[
\Delta_{\fnull}(\alpha;\tauND)
\asymp
\tauND+\sqrt{\tauND\alpha}.
\]
In particular, at the boundary \(\alpha=0\), and more generally for
\(\alpha\lesssim \tauND\), the sufficient gap improves to order
\(\tauND \asymp \frac{\VC(\cS)}{n}\).
\end{itemize}
Thus, for the linear benchmark, the sufficient gap is of order
\(\widetilde O(n^{-1/2})\) at interior points, while near the left
boundary it can improve to a smaller order \(\widetilde O(n^{-1})\).
\end{example}

\begin{example}[Tolerant total-variation benchmark]
\label{ex:gap-tv} 
Recall that the total-variation benchmark
\(\dTV(P,Q)\leqslant \varepsilon\) corresponds to
\(\fnull(\alpha)=g_\varepsilon(\alpha)=(1-\varepsilon-\alpha)_+\). 
In this case, we have
\begin{equation}
\Delta_{\fnull}(\alpha;\tauND)
=
\bigl[\tauND+\sqrt{\tauND V(\alpha)}\bigr]\wedge (1-\varepsilon-\alpha)_+
+
\Bigl(\sqrt{\tauND V((1-\varepsilon-\alpha)_+)}\Bigr)\wedge (1-\varepsilon-\alpha)_+.
\label{eq:Delta-tv}
\end{equation}
This benchmark behaves differently from the identity benchmark near
\(\alpha=0\). At the boundary,
\[
\Delta_{\fnull}(0;\tauND)
=
\tauND\wedge(1-\varepsilon)
+
\Bigl(\sqrt{\tauND\,V(1-\varepsilon)}\Bigr)\wedge(1-\varepsilon).
\]
Thus, for fixed \(\varepsilon\in(0,1)\) bounded away from \(0\) and \(1\), the
dominant term is of order \(\sqrt{\tauND}\). More generally, the boundary
behavior depends on \(V(1-\varepsilon)=\min\{\varepsilon,1-\varepsilon\}\).
In the usual tolerant regime with fixed nondegenerate \(\varepsilon\), there is
therefore no \(n^{-1}\)-type improvement at \(\alpha=0\), in stark contrast to
the identity benchmark.
\end{example}

\begin{example}[Curved benchmarks with slow boundary rates]
\label{ex:gap-sqrt}
The boundary rate near \(\alpha=0\) can be arbitrarily slow. Let \(\rho\geqslant 1\)
and consider
\[
\fnull(\alpha)=(1-\alpha^{1/\rho})^\rho,\qquad \alpha\in[0,1].
\]
This is a valid trade-off function.

With some calculations, we can show that at interior points, for instance \(\alpha\in[c,1-c]\) with constant
\(c>0\), both the input-side and output-side variance factors are of constant
order, and \(\fnull\) has derivative bounded away from zero and infinity.
Therefore
\[
\Delta_{\fnull}(\alpha;\tauND)\asymp \sqrt{\tauND}.
\]
However, near $0$, the behavior is different. At \(\alpha=0\),
\[
\Delta_{\fnull}(0;\tauND)
=
1-(1-\tauND^{1/\rho})^\rho
\asymp \tauND^{1/\rho}.
\]
The same order holds more generally for \(0\leqslant \alpha\lesssim \tauND\).
Thus, since \(\tauND=\widetilde O(n^{-1})\), the sufficient gap near
\(\alpha=0\) is of order \(\widetilde O(n^{-1/\rho})\), which can be made
arbitrarily slow by taking \(\rho\) large. This illustrates that even for smooth
curved benchmarks, the detection boundary is governed by the local geometry of
\(\fnull\), rather than by a single global smoothness parameter.
\end{example}

\subsection{Application to tolerant total-variation testing}

The tolerant-TV benchmark \(g_\varepsilon(\alpha)=(1-\varepsilon-\alpha)_+\)
also gives a direct comparison with the classical discrete tolerant-testing
literature. We record the implication of \Cref{prop:upper-rate} for
an alphabet of size \(k\).

\begin{corollary}[Tolerant TV testing on a finite alphabet]
\label{cor:tvtolerant-vc}
Consider the tolerant testing problem~\eqref{eq:tolerant-testing} over an
alphabet of size \(k\), and take \(\cS=2^{[k]}\), so that
\(\VC(\cS)=k\). Define
\[
\tau_{n,\delta}
=
\frac{4\{k\log(2n)+\log(32/\delta)\}}{n}.
\]
There exists a universal constant \(C>0\) such that if
\[
\varepsilon_2-\varepsilon_1
\;\geqslant\;
C\sqrt{\tau_{n,\delta}},
\]
then the test \(\psi_{n,\delta}(\cS)\) from \Cref{thm:general}
satisfies
\[
\errone(\psi_{n,\delta})\leqslant \delta,
\qquad
\errtwo(\cM_{\cS};\psi_{n,\delta})\leqslant \delta.
\]
\end{corollary}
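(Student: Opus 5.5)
The corollary should follow from \Cref{thm:general} combined with \Cref{prop:upper-rate}, specialised to $\fnull=g_{\varepsilon_1}$ and $\falt=g_{\varepsilon_2}$.

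Type~I error control is immediate. By the characterization $d_{\mathrm{TV}}(P,Q)\leqslant\varepsilon\iff \Toff{P}{Q}\succeq g_\varepsilon$, the null of \eqref{eq:tolerant-testing} coincides with $\Toff{P}{Q}\succeq\fnull$. Part (i) of \Cref{thm:general} then gives $\errone\leqslant\delta$, since $\VC(2^{[k]})=k<\infty$. The same characterization identifies the alternative with $\Toff{P}{Q}\not\succeq\falt$. So for type~II control it suffices to verify the separation condition \eqref{eq:general-sep}. I will do this through the sufficient condition \eqref{eq:upper-rate-clean} of \Cref{prop:upper-rate}, that is,
\[
g_{\varepsilon_1}(\alpha)-g_{\varepsilon_2}(\alpha)\geqslant C'\Delta_{g_{\varepsilon_1}}(\alpha;\tauND)\quad\text{for all }\alpha.
\]
(The power statement is over $\cM_{\cS}$; with $\cS=2^{[k]}$ every pair on $[k]$ is attainable anyway.)

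The core step is a pointwise comparison on three ranges of $\alpha$. Write $a=1-\varepsilon_1-\alpha$ and $\gamma=\varepsilon_2-\varepsilon_1$. By \eqref{eq:Delta-tv}, and using $V\leqslant 1/4$ and $\tauND\leqslant\sqrt{\tauND}$ (valid because $\tauND\leqslant 1/2$), we get
\[
\Delta_{g_{\varepsilon_1}}(\alpha;\tauND)\leqslant \min\{2\sqrt{\tauND},\,2a_+\}.
\]
\begin{itemize}
\item If $\alpha\geqslant 1-\varepsilon_1$, then $a\leqslant 0$. Both sides vanish, since $g_{\varepsilon_2}\leqslant g_{\varepsilon_1}=0$ there.
\item If $1-\varepsilon_2\leqslant\alpha<1-\varepsilon_1$, then $g_{\varepsilon_2}(\alpha)=0$. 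The gap equals $a$, while $\Delta\leqslant 2a$. This range is the reason the truncation $\wedge f(\alpha)$ in the modulus matters.
\item If $\alpha<1-\varepsilon_2$, the gap equals $\gamma$, and $\Delta\leqslant 2\sqrt{\tauND}$.
\end{itemize}

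The second range is the main obstacle. There the gap is only $a$, which can be tiny, and the bound gap $\geqslant C'\Delta$ would need $C'\leqslant 1/2$. \Cref{prop:upper-rate} asserts a universal $C$, but possibly $C>1/2$, so I cannot assume this range is harmless. To handle it I would bypass \Cref{prop:upper-rate} there and check \eqref{eq:general-sep} directly. Since $\falt(\alpha)=0$ on that range, \eqref{eq:general-sep} holds trivially, because the right-hand side is nonnegative. The same argument covers the first range.

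On $\alpha<1-\varepsilon_2$ I would also verify \eqref{eq:general-sep} directly rather than rely on the constant in \Cref{prop:upper-rate}. The map $\hup$ moves its argument up by at most $2\sqrt{\tauND}$, so applying it twice adds at most $4\sqrt{\tauND}$. Conversely, $\hinv$ lowers its argument by at most $2\sqrt{\tauND}$ on the relevant range, so applying it twice subtracts at most $4\sqrt{\tauND}$. Because $g_{\varepsilon_1}$ is $1$-Lipschitz, this gives
\[
\hinvinv\circ g_{\varepsilon_1}\circ\hupup(\alpha)\geqslant g_{\varepsilon_1}(\alpha)-8\sqrt{\tauND}=1-\varepsilon_1-\alpha-8\sqrt{\tauND}.
\]
For the $\hinv$ step I still need to check that $\hinv(t)\geqslant t-2\sqrt{\tauND}$. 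This follows from $\hup(s)\leqslant s+2\sqrt{\tauND}$ together with the cap at $1$; the floor at $0$ is also fine, because $\falt\geqslant 0$ is handled by nonnegativity.

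Hence \eqref{eq:general-sep} holds whenever $\gamma\geqslant 8\sqrt{\tauND}$, so the constant $C=8$ works. Part (ii) of \Cref{thm:general} then yields $\errtwo\leqslant\delta$.
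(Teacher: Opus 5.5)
Your final argument is correct and is essentially the paper's proof. The paper also bypasses \Cref{prop:upper-rate}, disposes of $\alpha\geqslant 1-\varepsilon_2$ by nonnegativity of the right-hand side of \eqref{eq:general-sep}, and on $\alpha<1-\varepsilon_2$ uses $\hup(s)\leqslant s+m$, hence $\hinv(t)\geqslant (t-m)_+$, with $m=\tauND+\sqrt{\tauND}\leqslant 2\sqrt{\tauND}$, reaching the same constant $C=8$; a small slip in your discarded preliminary bound is that $V(t)=\min\{t,(1-t)_+\}\leqslant 1/2$, not $1/4$, but this plays no role in the direct argument, where $\hup(s)\leqslant s+2\sqrt{\tauND}$ holds even with $V\leqslant 1$.
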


To compare \Cref{cor:tvtolerant-vc} with the discrete
tolerant-testing literature, it is helpful to translate known
sample-complexity bounds into detectable-gap scalings. The main result of
\cite{price2021tolerant} implies, up to logarithmic factors, that on a
discrete domain of size \(k\), the noiseless regime \(\varepsilon_1=0\)
admits detection at separation \(\varepsilon_2 \asymp k^{1/4}n^{-1/2}\),
whereas in the genuinely tolerant regime
\(\varepsilon_1/\varepsilon_2=\Theta(1)\), the detectable gap satisfies
$
\Delta:=\varepsilon_2-\varepsilon_1
\asymp
\widetilde\Theta\!\left(\sqrt{k/n}\right).
$
By \Cref{cor:tvtolerant-vc}, our general framework yields the
sufficient condition
$
\Delta \gtrsim \widetilde O\!(\sqrt{k/n}).
$
Thus we recover the same worst-case gap scaling. On the other hand, our
result does not capture the sharper \(k^{1/4}n^{-1/2}\) scaling available in
the special noiseless discrete setting. This is expected: our theorem is a
general sufficient condition for trade-off testing over a VC witness class,
rather than a sharp minimax analysis tailored to discrete total-variation
testing.

\section{Beyond exact attainability}
\label{sec:misspecification}

The exact attainability framework in \Cref{sec:general} gives a clean
possibility theory: if the Neyman--Pearson witness sets lie in a finite-VC
class \(\mathcal S\), then the trade-off curve can be tested from finite
samples, and if \(\VC(\mathcal S)=\infty\), exact uniform testing is impossible.
In applications, however, exact attainability may be too rigid. The true
likelihood-ratio level sets may not belong to any tractable VC class, even
though they can be well approximated by one.

This section extends the theory to that setting. The resulting picture is a
bias--variance trade-off. Enlarging \(\mathcal S\) improves approximation of the
true Neyman--Pearson witnesses, but increases the estimation error through
\(\VC(\mathcal S)\). Approximate attainability makes this tradeoff explicit
while preserving the assumption-free type~I validity of the test.

\subsection{Approximate attainability}

We begin by formalizing the approximation model.

\begin{definition}[$(\mathcal S, \eta_1, \eta_2)$-attainability]\label{def:approx-attain}
The class of $(\mathcal S, \eta_1, \eta_2)$-attainable pairs is defined as
\[
\cM_{\mathcal S, \eta_1, \eta_2}
=
\big\{(P,Q) : \exists\, (P', Q') \in \cM_{\mathcal S}
\text{ with }
\dTV(P, P') \leqslant \eta_1,\;
\dTV(Q, Q') \leqslant \eta_2\big\}.
\]
\end{definition}
Thus \((P,Q)\) need not have likelihood-ratio level sets in \(\mathcal S\)
itself; it only needs to be close, in total variation, to a pair that does.
The parameters \(\eta_1\) and \(\eta_2\) measure the approximation error in the
two coordinates of the trade-off curve.
Total variation is the natural metric here because it directly
controls the trade-off function. Indeed, whenever
\(\dTV(P,P')\leqslant \eta_1\) and \(\dTV(Q,Q')\leqslant \eta_2\), one has
\[
\Toff{P}{Q}(\alpha)\geqslant \Toff{P'}{Q'}(\alpha+\eta_1)-\eta_2.
\]
Thus, closeness in total variation to an exactly attainable pair translates
immediately into pointwise control of the trade-off function. The exact model
is recovered by taking \(\eta_1=\eta_2=0\).

The next theorem shows that the test from \Cref{thm:general} degrades
gracefully under this approximation. Type~I error remains assumption-free,
because the rejection rule is unchanged. Power is affected only through
additional shifts in the separation condition, corresponding to the
approximation errors in the \(P\)- and \(Q\)-coordinates.
\begin{theorem}\label{thm:misspecified}
Fix $\delta \in (0,1)$ and a set collection $\cS$ with
$\VC(\cS) < +\infty$. The test $\psi_{n,\delta}$ defined in
\eqref{eq:general-test} satisfies the following.

\begin{enumerate}[label=(\roman*)]
\item \textbf{Type~I error control (assumption-free).}
$\errone(\psi_{n,\delta}) \;\leqslant\; \delta.
$

\item \textbf{Type~II error control (under approximate attainability).}
If
\begin{equation}\label{eq:gap-misspecified}
\forall\, \alpha \in [0,1]:\qquad
\falt(\alpha)
\;\leqslant\;
\max\big\{\hinvinv\circ\fnull\circ \hupup(\alpha + 2\eta_1)-2\eta_2,\;0\big\},
\end{equation}
then 
$\errtwo(\cM_{\cS,\eta_1,\eta_2};\, \psi_{n,\delta}) \;\leqslant\; \delta.
$
\end{enumerate}
\end{theorem}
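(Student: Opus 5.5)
Part (i) is literally part (i) of \Cref{thm:general}: the test \eqref{eq:general-test} is unchanged, and its type~I guarantee used no assumption on $(P,Q)$. So we only need part (ii).

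\textbf{Reduction to \Cref{thm:general}.} Fix $(P,Q)\in\cM_{\cS,\eta_1,\eta_2}$ with $\Toff{P}{Q}\nsucceq\falt$. Choose $(P',Q')\in\cM_\cS$ with $\dTV(P,P')\leqslant\eta_1$ and $\dTV(Q,Q')\leqslant\eta_2$. The samples still come from $(P,Q)$, so we cannot apply \Cref{thm:general}(ii) to $(P',Q')$ as a black box. Instead we rerun its power argument with a witness built from $(P',Q')$ and evaluated under $(P,Q)$.

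\textbf{Step 1: a witness for $(P',Q')$.} Pick $\alpha_0$ with $\Toff{P}{Q}(\alpha_0)<\falt(\alpha_0)$; in particular $\falt(\alpha_0)>0$. I will use the reverse form of the total-variation inequality stated before the theorem:
\[
\Toff{P'}{Q'}(\alpha+\eta_1)\leqslant \Toff{P}{Q}(\alpha)+\eta_2 .
\]
This holds because a test for $(P,Q)$ at level $\alpha$ is a test for $(P',Q')$ at level at most $\alpha+\eta_1$, with type~II error changed by at most $\eta_2$. Hence
\[
\Toff{P'}{Q'}(\alpha_0+\eta_1)<\falt(\alpha_0)+\eta_2 .
\]
By \Cref{prop:reduction}, the point $(\alpha_0+\eta_1,\cdot)$ lies in the closed convex hull of the pairs $\{(P'(S),Q'(S^\complement)):S\in\cS\}$. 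As in the proof of \Cref{thm:general}, Neyman--Pearson lets us take a convex combination of at most two NP sets $S_1,S_2\in\cS$ (one of them possibly randomized boundary-wise). Then I pass from $(P',Q')$ back to $(P,Q)$ for these fixed sets: $|P(S)-P'(S)|\leqslant\eta_1$ and $|Q(S^\complement)-Q'(S^\complement)|\leqslant\eta_2$. So the pairs $(P(S_i),Q(S_i^\complement))$ lie within $(\eta_1,\eta_2)$ of the $(P',Q')$ pairs.

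This yields a point on the segment between the $(P(S_i),Q(S_i^\complement))$ with first coordinate at most $\alpha_0+2\eta_1$ and second coordinate below $\falt(\alpha_0)+2\eta_2$. This accounts for the factor $2$ in \eqref{eq:gap-misspecified}.

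\textbf{Step 2: rerun the power proof.} The proof of \Cref{thm:general}(ii) proceeds as follows: on the event of \Cref{lem:nuc} for both samples (probability at least $1-\delta$), monotonicity of $\hup$ and $\fnull$ gives empirical rejection whenever some $S\in\cS$ satisfies
\[
Q(S^\complement)\leqslant \hinvinv\circ\fnull\circ\hupup(P(S)) ,
\]
with strict inequality, plus a convexity argument to handle the segment. I will invoke exactly that deterministic implication, taking it as a lemma from that proof, with the population pair $(a,b)$ produced in Step~1. Here $a\leqslant\alpha_0+2\eta_1$ and $b<\falt(\alpha_0)+2\eta_2$.

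\textbf{Step 3: combine.} By \eqref{eq:gap-misspecified}, and since $\falt(\alpha_0)>0$, the maximum there equals its first entry, so
\[
b<\hinvinv\circ\fnull\circ\hupup(\alpha_0+2\eta_1)\leqslant \hinvinv\circ\fnull\circ\hupup(a),
\]
where the last step uses that the composite is nonincreasing. This triggers rejection on the good event, so $\errtwo\leqslant\delta$.

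\textbf{Main obstacle.} The delicate point is Step~1 combined with the segment handling. The convex-hull argument in \Cref{thm:general} must tolerate the perturbation of both endpoints while keeping the composite map's monotonicity and convexity usable. I would first try the simplest route: perturb the endpoints only after selecting the supporting line, so that the proof of \Cref{thm:general} applies verbatim to the shifted pair.
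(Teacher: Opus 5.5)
Your outline is correct and has the same skeleton as the paper's proof. The shared steps are: the inequality $\Toff{P'}{Q'}(\alpha+\eta_1)\leqslant \Toff{P}{Q}(\alpha)+\eta_2$; a witness in $\cS$ extracted from the attainable pair $(P',Q')$; transfer of a fixed set to $(P,Q)$ at cost $(\eta_1,\eta_2)$; monotonicity of $F:=\hinvinv\circ\fnull\circ\hupup$; and the single-set rejection implication on $\mathcal E$. The one real difference is the order of the convexity step and the transfer. The paper applies \Cref{lem:witness} directly to $(P',Q')$ at level $\alpha+\eta_1$, with the convex, continuous, non-increasing function $G(x):=F(x+\eta_1)-\eta_2$. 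This gives a single $S^\star\in\cS$ with $Q'((S^\star)^\complement)<G(P'(S^\star))$, and only that one set is moved to $(P,Q)$. So no segment ever has to be handled under $(P,Q)$, and the limiting argument for the non-strict level set is done once, inside \Cref{lem:witness}.

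In your order you must supply the segment step yourself. It is not part of the proof of \Cref{thm:general}, as you suggest: there the only convexity argument sits inside \Cref{lem:witness}, applied to $\falt$ at the population level, and the rest of that proof treats a single set. What you need is that $F$ is convex, non-increasing and continuous. This follows by applying \Cref{lem:tilde-f} twice, first to $\fnull$ and then to $\widetilde\fnull$; the paper uses the same fact implicitly when it invokes \Cref{lem:witness} with $G$. Given $(a,b)=(1-\gamma)\bigl(P(S_1),Q(S_1^\complement)\bigr)+\gamma\bigl(P(S_2),Q(S_2^\complement)\bigr)$ with $b<F(a)$, convexity gives $F(a)\leqslant(1-\gamma)F(P(S_1))+\gamma F(P(S_2))$. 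Hence $Q(S_i^\complement)<F(P(S_i))$ for some $i$, and the single-set implication yields rejection. Continuity of $F$ handles the boundary set: $S_{\geqslant\lambda}$ need not lie in $\cS$, and replacing it by a member of $\cS$ representing $S_{>\lambda-\varepsilon}$ only yields $a\leqslant\alpha_0+2\eta_1+o(1)$ as $\varepsilon\downarrow0$. This is absorbed because $b\leqslant\Toff{P'}{Q'}(\alpha_0+\eta_1)+\eta_2<F(\alpha_0+2\eta_1)$ holds uniformly in $\varepsilon$. With this, no supporting-line construction is needed and your proof is complete.
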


\noindent See \Cref{sec:proof-misspecified} for the proof.
\medskip

Compared with \Cref{thm:general}, approximate attainability only
inflates the required separation by the approximation errors \(\eta_1\) and
\(\eta_2\). Thus the total detection threshold has two components:
\[
\text{estimation error from }\VC(\mathcal S)
\qquad+\qquad
\text{approximation error from }(\eta_1,\eta_2).
\]
This
decomposition parallels the usual bias--variance trade-off in nonparametric
estimation: the statistician chooses the model class \(\cS\) to balance
approximation quality against the complexity cost of uniform estimation.

\subsection{Application to log-concave distributions}

We illustrate approximate attainability on univariate log-concave
distributions. This class is broad enough that exact finite-VC attainability is
not available uniformly: likelihood-ratio level sets of log-concave pairs can
oscillate arbitrarily often. Nevertheless, log-concavity imposes enough shape
structure that these level sets can be approximated by unions of intervals.
The approximate-attainability theorem then yields finite-sample testing
guarantees by balancing the number of intervals against the approximation
error.

\begin{definition}
A distribution on \(\bbR\) is \emph{log-concave} if it admits a density \(p\)
such that \(\log p\) is concave on its support.
\end{definition}

Exact finite-VC attainability cannot hold uniformly over the full log-concave
class. Indeed, one can construct log-concave densities
\begin{align*}
p_\pm(x;\theta,s)
=
\frac{1}{Z_\pm\cdot s}
\exp\left(-\frac{(x-\theta)^2}{2s^2}\pm a\cos\left(\frac{x-\theta}{s}\right)\right),
\qquad
\theta\in\bbR,\ s>0,
\end{align*}
for some \(a\in(0,1)\), where \(Z_\pm\) are normalizing constants depending
only on \(a\). Note that
\begin{align*}
\frac{\partial^2\log p_\pm}{\partial x^2}(x;\theta,s)
=
\frac{-1\mp a\cos((x-\theta)/s)}{s^2}
<0.
\end{align*}
Therefore, both \(p_\pm(\cdot;\theta,s)\) are log-concave for all
\((\theta,s)\). However, the likelihood ratio
\begin{align*}
\frac{ p_+}{p_-}(x;\theta,s)
=
\frac{Z_-}{Z_+}\exp\left(2a\cos\left(\frac{x-\theta}{s}\right)\right)
\end{align*}
is periodic and fluctuates between
\((Z_-/Z_+)\cdot [e^{-2a},e^{2a}]\). Thus, for any threshold
\(t\in(Z_-/Z_+)\cdot (e^{-2a},e^{2a})\), the collection of level sets
\[
\{x:\, p_+/ p_-(x;\theta,s)>t\}
\]
can shatter arbitrarily many finite points on \(\bbR\) by suitably choosing
the shift \(\theta\) and the scale \(s\).

Such complicated level-set structures can nevertheless be handled from an
approximation-theoretic viewpoint. Our approximating classes are the
interval-union classes
\begin{align}\label{eq:union-interval}
   \cI_K
:=
\left\{
\bigcup_{j=1}^K (a_j,b_j)
:\;
-\infty \leqslant a_1 \leqslant b_1 \leqslant \cdots \leqslant a_K \leqslant b_K \leqslant \infty
\right\}. 
\end{align}
We allow degenerate intervals $a_j=b_j$, so $\cI_K$ is exactly the class of
unions of at most $K$ intervals. In particular,
\[
\cI_1 \subseteq \cI_2 \subseteq \cdots,
\]
and $\cI_1$ contains half-lines. 
This class has VC dimension $\mathrm{VC}(\cI_K)=2K$.
The following approximation result shows that log-concave pairs are
approximately attainable by interval-union classes.

\begin{lemma}\label{lem:log-concave-approx-attain}
For any \(\eta>0\), every pair of log-concave distributions on \(\bbR\) is
\((\cI_{\lceil \frac{1}{\sqrt{\eta}}\rceil}, \widetilde{O}(\eta), \widetilde{O}(\eta))\)-attainable. That is,
\[
\cM_{\mathrm{LC}}
\define
\big\{(P,Q): P,Q \text{ are log-concave on } \bbR\big\}
\subseteq
\cM_{\cI_{\lceil \frac{1}{\sqrt{\eta}}\rceil}, \widetilde{O}(\eta), \widetilde{O}(\eta)}.
\]
\end{lemma}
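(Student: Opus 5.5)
Given log-concave $P,Q$ with densities $p,q$, I will build a nearby pair $(P',Q')\in\cM_{\cI_K}$, $K=\lceil 1/\sqrt{\eta}\rceil$, by replacing $p$ and $q$ with suitably chosen piecewise log-linear densities. The key observation is this. If $p'$ and $q'$ are both log-linear on each cell of a common partition of $\bbR$ into $m$ intervals, then $\log(q'/p')$ is affine on each cell. Hence each level set $\{q'>\lambda p'\}$ intersects every cell in a single interval, a half-line piece, or the empty set, and so is a union of at most $m$ intervals. The endpoint cases $\lambda=0$ and $\lambda=\infty$ reduce to $\{q'>0\}$ and $\{p'=0\}$. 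If the approximants are supported on intervals, these sets are also unions of at most $O(1)$ intervals. Therefore $(P',Q')\in\cM_{\cI_{O(m)}}$, and it remains to show that $m\asymp 1/\sqrt{\eta}$ cells suffice for total-variation error $\widetilde O(\eta)$ for each of $p$ and $q$. The constant factor in $m$ can be absorbed by rescaling $\eta$ by a constant, which is harmless inside $\widetilde O$.

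The approximation step rests on the classical fact from log-concave density estimation, used e.g.\ in the literature on learning log-concave densities with piecewise polynomials: a log-concave density is $\epsilon$-close in TV to a piecewise log-linear (or piecewise constant on the log scale) density with $\widetilde O(\epsilon^{-1/2})$ pieces. I will carry this out in three steps.

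First, I will truncate to the effective support. Log-concave densities have exponentially decaying tails beyond $O(\log(1/\eta))$ standard deviations. Discarding mass $\eta$ therefore leaves a bounded interval $I_p$ carrying mass $1-\eta$, and similarly $I_q$. On the region where both densities matter, I take the union of the two breakpoint sets.

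Second, on $I_p$ I will use the concavity of $\phi=\log p$. I will choose breakpoints so that on each cell the linear interpolant of $\phi$ deviates from $\phi$ by at most $O(\sqrt{\eta})$ in sup norm, weighted by mass. The standard argument splits the support into $O(\log(1/\eta))$ regions on which $p$ varies by a constant factor. On such a region, concavity plus monotonicity of $\phi'$ lets one bound the total "turning" of the slope. With $k$ equal-mass cells, the chord error of a concave function contributes relative $L^1$ error $O(1/k^2)$. Taking $k\asymp\eta^{-1/2}$ per region then gives error $O(\eta)$ per region, hence $\widetilde O(\eta)$ overall, using $\widetilde O(\eta^{-1/2})$ cells.

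Third, I will renormalize $p'$. Because the log-linear chord lies below the concave $\phi$, we have $p'\le p$ on each cell, so renormalization changes TV by at most the deficit. I will merge the partitions for $p$ and $q$; merging at most doubles the cell count. Both approximants are then log-linear on every common cell, and the first paragraph applies, with $\dTV(P,P'),\dTV(Q,Q')=\widetilde O(\eta)$.

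The main obstacle is the second step: getting the $1/\sqrt{\eta}$ count rather than $1/\eta$, i.e.\ exploiting second-order (chord) approximation of a concave function uniformly without smoothness assumptions. I would handle this by bounding $\int_{\text{cell}}|e^{\phi}-e^{\ell}|$ by $p(\text{cell})\cdot(\phi'(a^+)-\phi'(b^-))\cdot|b-a|$. Then I would use a dyadic/greedy choice of breakpoints balancing mass times slope change, with Cauchy–Schwarz giving total error $\lesssim(\text{total slope-mass budget})^2/k^2$. The total budget is $O(\log(1/\eta))$ after truncation. Polylog losses are absorbed into $\widetilde O$, and the $\lceil\eta^{-1/2}\rceil$ interval count follows after adjusting $\eta$ by polylog factors, as the $\widetilde O$ notation permits.
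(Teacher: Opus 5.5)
Your reduction is a legitimate variant of the paper's. The paper approximates each density by a piecewise \emph{linear} density with $\lceil 1/(2\sqrt{\eta})\rceil$ pieces and notes that $f-\lambda g$ is piecewise linear on the merged partition. You use piecewise log-linear approximants, so $\log(q'/p')$ is affine on each common cell. Either way each level set meets each cell in one interval. Your handling of $\lambda\in\{0,\infty\}$, of renormalization, and of absorbing constants and polylogarithms into $\widetilde O$ by rescaling $\eta$ is fine.

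The gap is the approximation step. The paper imports it (Lemma~27 of \cite{chan2014efficient}); you try to prove it, and your sketch does not give $\widetilde O(\eta^{-1/2})$ cells.

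\emph{Equal-mass cells fail.} On a region where $p$ varies by a factor $2$, the bound $\operatorname{osc}\phi\leqslant\log 2$ does not control the turning of $\phi'$, so equal-mass cells do not give error $O(1/k^2)$. Take $\phi(x)=Ax$ on $(-\infty,0]$, $\phi=0$ on $[0,1]$, and $\phi=-\infty$ beyond; this is log-concave after normalization. On $\{e^{\phi}\geqslant 1/2\}=[-\log 2/A,1]$ with $A\gg k$, the first equal-mass cell contains the kink. The chord sits a constant distance below $\phi$ on a constant fraction of that cell, so the error is $\Theta(1/k)$. For $k\asymp\eta^{-1/2}$ this is $\Theta(\sqrt{\eta})\gg\eta$.

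\emph{The budget argument does not repair this.} With the per-cell bound $p(\text{cell})\,s_jw_j$ and Cauchy--Schwarz balancing, the error is controlled by $SW/k^2$. Here $S$ is the total turning of $\phi'$ and $W$ the width of the truncated support, and $SW$ is not $O(\log(1/\eta))$.
\begin{itemize}
\item For $p(x)=2x$ on $[0,1]$ one has $\phi'(x)=1/x$. At the $\eta$-quantile $L=\sqrt{\eta}$ this gives $S\asymp\eta^{-1/2}$ with $W\asymp 1$, forcing $k\asymp\eta^{-3/4}$.
\item For $p(x)=(1+a)x^a$ with $a=1/\log(1/\eta)$ one gets $SW\asymp 1/(\eta\log(1/\eta))$, forcing $k$ of order $\eta^{-1}$ up to logarithms.
\item Region by region, the kink example already makes $S_RW_R$ arbitrarily large.
\end{itemize}

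What is missing is a breakpoint scheme adapted to where $\phi'$ turns, driven by a \emph{distance-weighted} turning bound rather than the raw turning. One way to do it:
\begin{enumerate}
\item Split the truncated support at the mode and into the $O(\log(1/\eta))$ level regions.
\item On such a piece $[u,v]$ with $\phi$ increasing (so $u$ is the steep end), integration by parts gives $\int_{(u,v)}(x-u)\,d(-\phi')(x)\leqslant\phi(v)-\phi(u)\leqslant\log 2$.
\item Use one end cell $[u,u+W/t^2]$, whose error is at most its mass. Then use dyadic shells $[u+d_i,u+2d_i]$ with $d_i=2^iW/t^2$.
\item Subdivide shell $i$ into $n_i\propto(d_i^2T_i)^{1/3}$ equal cells (rounded up), where $T_i$ is the turning inside the shell.
\item Apply H\"older with $\sum_i d_iT_i\leqslant\log 2$ and $\sum_i d_i^{1/2}\lesssim W^{1/2}$. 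This gives error $O(p([u,v])/t^2)$ using $t+O(\log t)$ cells.
\item Summing over regions gives error $O(1/t^2)$ with $O(t\log(1/\eta))$ cells, which is the count you need.
\end{enumerate}

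Alternatively, cite the approximation result as the paper does. With Lemma~27 of \cite{chan2014efficient} the approximants are piecewise linear rather than log-linear. The level-set step then becomes the paper's pivot-merging argument that pairs of $t$-piece piecewise linear densities are $\cI_{2t-1}$-attainable.
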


\noindent The underlying
mechanism is geometric: log-concavity imposes strong shape constraints on the
densities, which in turn limit the complexity of their likelihood-ratio level
sets. See \Cref{sec:log-concave} for the proof.
\medskip

We now specialize the general theory to tolerant total-variation testing.

\begin{corollary}\label{cor:log-concave-tolerant}
Consider the tolerant testing problem~\eqref{eq:tolerant-testing} based on
\(n\) samples from each distribution. Fix a target level \(\delta \in (0,1)\).
The test \(\psi_{n,\delta}\) defined
in \eqref{eq:general-test} based on the class \(\cI_{ \lceil n^{1/5}\rceil}\) achieves \(\errone(\psi_{n,\delta}) \leqslant \delta\) and \(\errtwo(\cM_{\mathrm{LC}};\, \psi_{n,\delta}) \leqslant \delta\), 
provided that the gap satisfies
\begin{align*}
\varepsilon_2-\varepsilon_1 \geqslant \widetilde{O}(n^{-2/5}).
\end{align*}
\end{corollary}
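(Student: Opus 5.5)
The plan is to combine \Cref{lem:log-concave-approx-attain}, \Cref{thm:misspecified}, and the local-rate bound in \Cref{prop:upper-rate} specialized to the total-variation benchmark, as in \Cref{ex:gap-tv} and \Cref{cor:tvtolerant-vc}. Type~I control is immediate: \Cref{thm:misspecified}(i) (equivalently \Cref{thm:general}(i)) is assumption-free, and $\VC(\cI_K)=2K<\infty$. All the work is in the type~II error.

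\emph{Choice of parameters.} Take $\eta$ so that $\lceil 1/\sqrt{\eta}\rceil = \lceil n^{1/5}\rceil$, i.e.\ $\eta \asymp n^{-2/5}$. By \Cref{lem:log-concave-approx-attain}, $\cM_{\mathrm{LC}} \subseteq \cM_{\cI_K,\eta_1,\eta_2}$ with $K=\lceil n^{1/5}\rceil$ and $\eta_1,\eta_2=\widetilde O(n^{-2/5})$. For this class, $\VC(\cI_K)=2K$, so
\[
\tauND = \frac{4\{2K\log(2n)+\log(32/\delta)\}}{n} = \widetilde O(n^{-4/5}),
\qquad \sqrt{\tauND}=\widetilde O(n^{-2/5}).
\]
This choice balances the two error sources, since the variance term $\sqrt{K/n}$ equals the approximation term $1/K^2$ when $K\asymp n^{1/5}$.

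\emph{Verifying \eqref{eq:gap-misspecified}.} With $\fnull=g_{\varepsilon_1}$ and $\falt=g_{\varepsilon_2}$, we need
\[
(1-\varepsilon_2-\alpha)_+\leqslant \max\{G(\alpha+2\eta_1)-2\eta_2,0\},
\qquad G:=\hinvinv\circ g_{\varepsilon_1}\circ\hupup .
\]
I would use crude uniform bounds. First, $\hup(t)\leqslant t+\sqrt{\tauND}+\tauND$, so $\hupup(t)\leqslant t+c\sqrt{\tauND}$. Second, since $\hup(s)\leqslant s+c'\sqrt{\tauND}$, we have $\hinv(t)\geqslant t-c'\sqrt{\tauND}$; applying this twice gives $\hinvinv(t)\geqslant t-2c'\sqrt{\tauND}$. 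Because $g_{\varepsilon_1}$ is $1$-Lipschitz and nonincreasing,
\[
G(\alpha+2\eta_1)\geqslant (1-\varepsilon_1-\alpha)_+ - 2\eta_1 - C\sqrt{\tauND}.
\]
One must handle the positive part and the truncation at $1$ in $\hup$, but both only help. Where $1-\varepsilon_2-\alpha\leqslant 0$ the condition holds trivially. Elsewhere it suffices that
\[
\varepsilon_2-\varepsilon_1\geqslant 2\eta_1+2\eta_2+C\sqrt{\tauND}=\widetilde O(n^{-2/5}).
\]
Alternatively, this step can be obtained by invoking the argument of \Cref{prop:upper-rate} with the shifted argument.

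\emph{Main obstacle.} The delicate step is the generalized inverse $\hinv$. A left-continuous inverse of a nonlinear, truncated map does not obviously satisfy a lower bound of the form $t-O(\sqrt{\tauND})$ near the boundaries $t\approx 0$ and $t\approx 1$. In particular, $\hinv(t)$ could be $0$ when $t\leqslant\tauND$. This causes no difficulty, because in that regime the target bound $t-O(\sqrt{\tauND})$ is already nonpositive, so the max with $0$ in \eqref{eq:gap-misspecified} absorbs the case. I would prove the inequality $\hinv(t)\geqslant t-\sqrt{\tauND}-\tauND$ directly from the definition: if $s<t-\sqrt{\tauND}-\tauND$ then $\hup(s)<t$, so every $s$ with $\hup(s)\geqslant t$ satisfies $s\geqslant t-\sqrt{\tauND}-\tauND$. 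Finally, the $\widetilde O$ factors hidden in \Cref{lem:log-concave-approx-attain} are polylogarithmic in $1/\eta$, hence in $n$, so they are absorbed into the stated $\widetilde O(n^{-2/5})$.
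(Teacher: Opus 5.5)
Your proposal is correct and follows the paper's proof. Like the paper, you take $K=\lceil n^{1/5}\rceil$ (i.e.\ $\eta=n^{-2/5}$), use \Cref{lem:log-concave-approx-attain} to place $\cM_{\mathrm{LC}}$ in $\cM_{\cI_K,\widetilde O(\eta),\widetilde O(\eta)}$, note $\tauND=\widetilde O(n^{-4/5})$, and verify \eqref{eq:gap-misspecified} for $g_{\varepsilon_1},g_{\varepsilon_2}$ in order to apply \Cref{thm:misspecified}. The one minor difference is how you check the separation condition. You use the direct shift bounds $\hup(t)\leqslant t+\tauND+\sqrt{\tauND}$ and $\hinv(t)\geqslant t-\tauND-\sqrt{\tauND}$, exactly as in the paper's proof of \Cref{cor:tvtolerant-vc}, whereas the paper goes through the estimate \eqref{eq:gap-upper-rate-proof} from the proof of \Cref{prop:upper-rate}; your version is self-contained and handles arguments $\alpha+2\eta_1>1$ without extra care.
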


\noindent The rate follows from the bias--variance balance described above. With \(K\)
intervals, the VC penalty contributes an estimation term of order
\(\sqrt{K/n}\) for the tolerant-TV benchmark, while
\Cref{lem:log-concave-approx-attain} contributes approximation error
\(\widetilde O(K^{-2})\). Choosing \(K\asymp n^{1/5}\) balances these terms and
yields the rate \(\widetilde O(n^{-2/5})\). See~\Cref{sec:proof-cor-log-concave} for the proof. 
\medskip

For the special non-tolerant closeness problem \(\varepsilon_1=0\), sharper
problem-specific methods achieve the minimax rate
\(\widetilde O(n^{-4/9})\) for univariate log-concave distributions
\cite{TV_log_concave}. Our \(\widetilde O(n^{-2/5})\) rate is slower, as
expected, because it comes from a generic trade-off testing procedure and
applies to arbitrary benchmark curves, not only to total-variation closeness.

\section{Efficient computation}\label{sec:computation}

The statistical results above define the test as a scan over a class of witness
sets \(\cS\):
\begin{equation}
\label{eq:test-comp-start}
\psi_{n,\delta}(\cS)
=
\mathbf 1\Bigl\{
\exists S\in\cS:
h_+\bigl(Q_n(S^\complement)\bigr)
<
\fnull\bigl(h_+(P_n(S))\bigr)
\Bigr\}.
\end{equation}
At first sight, this may appear computationally difficult, because \(\cS\) may
be infinite and the rejection condition is nonlinear in the empirical
probabilities. In this section, we show that the scan has a simple algorithmic
reduction: it can be implemented using a finite number of cost-sensitive
empirical risk minimization problems over \(\cS\).

This reduction separates the statistical and computational aspects of the
method. Finite VC dimension controls the statistical error, but it does not by
itself imply computational tractability. Computation is efficient whenever the
corresponding cost-sensitive ERM problem over \(\cS\) is efficient.

Recall that \(\hinv\) is the left-continuous inverse of \(h_+\). Define the
surrogate benchmark
\[
\widetilde \fnull
=
\hinv\circ \fnull\circ h_+.
\]
By \Cref{lem:hinv-properties}(iii), the rejection rule in
\eqref{eq:test-comp-start} is equivalent to
\[
\exists\,S\in\cS:
\qquad
Q_n(S^\complement)<\widetilde\fnull(P_n(S)).
\]
Moreover, by \Cref{lem:tilde-f}, \(\widetilde\fnull\) is convex and
non-increasing.

Since \(P_n(S)\in\{0,1/n,\ldots,1\}\), only the values of
\(\widetilde\fnull\) on this grid matter. For \(k=1,\ldots,n\), define
\begin{align}
\label{eq:intercept_and_slope}
\lambda_k
&=
n\left[
\widetilde{\fnull}\left(\frac{k}{n}\right)
-
\widetilde{\fnull}\left(\frac{k-1}{n}\right)
\right],
\\
c_k
&=
\widetilde{\fnull}\left(\frac{k-1}{n}\right)
-
(k-1)
\left[
\widetilde{\fnull}\left(\frac{k}{n}\right)
-
\widetilde{\fnull}\left(\frac{k-1}{n}\right)
\right].
\nonumber
\end{align}
Equivalently, \(c_k+\lambda_k t\) is the affine extension of the linear segment
joining the two grid values of \(\widetilde\fnull\) at \((k-1)/n\) and \(k/n\).
Convexity of \(\widetilde\fnull\) implies that its piecewise-linear
interpolation on the grid is the maximum of these affine pieces.

\begin{proposition}[Reduction to weighted ERM]\label{prop:computation-main}
The test \(\psi_{n,\delta}(\cS)\) rejects if and only if there exists
\(k\in\{1,\ldots,n\}\) such that
\begin{equation}
\label{eq:test-cover}
\inf_{S\in\cS}
\left\{
Q_n(S^\complement)-\lambda_k P_n(S)
\right\}
<
c_k.
\end{equation}
\end{proposition}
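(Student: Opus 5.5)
The plan is to start from the equivalent form of the rejection rule already recorded before the statement: by \Cref{lem:hinv-properties}(iii), $\psi_{n,\delta}(\cS)=1$ iff there is $S\in\cS$ with $Q_n(S^\complement)<\widetilde\fnull(P_n(S))$. Since $P_n(S)$ takes values on the grid $\{0,1/n,\ldots,1\}$, we only need to understand $\widetilde\fnull$ on that grid. Write $g_k(t)=c_k+\lambda_k t$ for the affine pieces in \eqref{eq:intercept_and_slope}. The key identity to establish is
\[
\widetilde\fnull(j/n)=\max_{1\le k\le n} g_k(j/n),\qquad j=0,1,\ldots,n.
\]
Once this is in hand, the rule becomes: there exist $S\in\cS$ and $k$ with $Q_n(S^\complement)<c_k+\lambda_kP_n(S)$. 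Equivalently, there exists $k$ with $\inf_{S\in\cS}\{Q_n(S^\complement)-\lambda_kP_n(S)\}<c_k$, which is \eqref{eq:test-cover}.

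For the identity, first check by direct substitution that $g_k((k-1)/n)=\widetilde\fnull((k-1)/n)$ and $g_k(k/n)=\widetilde\fnull(k/n)$. This gives $\max_k g_k(j/n)\ge\widetilde\fnull(j/n)$, using $k=j$ for $j\ge1$ and $k=1$ for $j=0$. For the reverse inequality, use that $\widetilde\fnull$ is convex (\Cref{lem:tilde-f}). Convexity makes the slopes $\lambda_k$ non-decreasing in $k$. Then for a grid point $j/n$ outside the segment $[(k-1)/n,k/n]$, the chord line $g_k$ lies below the convex function there: to the right by the three-slope inequality, and to the left symmetrically. Hence $g_k(j/n)\le\widetilde\fnull(j/n)$ for all $k,j$. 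This is the standard fact that a convex function dominates the extensions of its chords outside their base intervals.

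After that, the equivalence is a matter of unwinding quantifiers. In the forward direction, a rejecting $S$ with $P_n(S)=j/n$ yields some $k$ attaining the max, so $Q_n(S^\complement)-\lambda_kP_n(S)<c_k$, and the infimum over $S$ is below $c_k$. In the reverse direction, a strict inequality for the infimum produces an $S\in\cS$ with $Q_n(S^\complement)<g_k(P_n(S))\le\widetilde\fnull(P_n(S))$. The infimum is over an infinite class, but it takes finitely many values because $(P_n,Q_n)$ range over a grid, so no attainment issue arises.

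The main obstacle is making sure $\widetilde\fnull$ is finite and genuinely convex on the whole closed grid, including the endpoints. Problems can come from the left-continuous inverse $\hinv$ at $0$ and from the extension $\fnull=0$ beyond $1$. If \Cref{lem:tilde-f} only gives convexity on the interior, I would handle $j=0$ and $j=n$ directly. At $j=n$, the value is $0$ after truncation, so no $Q_n\ge0$ can satisfy the strict inequality, and the $k=n$ piece likewise gives $g_n(1)=\widetilde\fnull(1)$. At $j=0$, equality with $g_1$ is built in.
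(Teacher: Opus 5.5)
Your proposal is correct and follows essentially the paper's argument: you reduce via \Cref{lem:hinv-properties}(iii) to $Q_n(S^\complement)<\widetilde\fnull(P_n(S))$, use convexity of $\widetilde\fnull$ to write its grid values as $\max_k(c_k+\lambda_k t)$ (the paper phrases this through the piecewise-linear interpolant $g=\max_k g_k$), and then swap the quantifiers over $S$ and $k$. Two of your precautions are unnecessary: \Cref{lem:tilde-f} already gives convexity of $\widetilde\fnull$ on all of $[0,\infty)$, so the endpoint contingency is moot, and an infimum strictly below $c_k$ always has an element strictly below $c_k$, so no attainment argument is required.
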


Thus the nonlinear scan in \eqref{eq:test-comp-start} reduces to at most \(n\)
linear optimization problems over the same witness class \(\cS\). For fixed
\(k\), since \(\widetilde\fnull\) is non-increasing, we have
\(\lambda_k\leqslant 0\). Hence the objective in~\eqref{eq:test-cover} can be
written as
\[
Q_n(S^\complement)-\lambda_k P_n(S)
=
Q_n(S^\complement)+(-\lambda_k)P_n(S).
\]
If \(S\) is interpreted as the region classified as coming from \(Q\), then
\(Q_n(S^\complement)\) is the empirical false-negative rate and \(P_n(S)\) is
the empirical false-positive rate. Thus each subproblem is a cost-sensitive
classification problem over \(\cS\), with false-negative cost \(1\) and
false-positive cost \(-\lambda_k\). The different values of \(k\) correspond to
the different slopes of the corrected benchmark, and hence to different
relative costs of false positives and false negatives.

Consequently, if each cost-sensitive ERM problem over \(\cS\) can be solved in
time \(\mathcal T(n)\), then the full test can be implemented in
\(O(n\mathcal T(n))\) time, up to preprocessing costs.

For specific set classes, the weighted ERM problem admits efficient algorithms.
For unions of at most \(K\) intervals, \(\cI_K\), each subproblem reduces to
finding a union of \(K\) intervals on the sorted pooled sample that minimizes
the corresponding cost-sensitive empirical risk, or equivalently maximizes the
associated weighted discrepancy. This can be solved in \(O(nK)\) time by
dynamic programming, with faster specialized algorithms for maximum-sum
interval problems available in some settings; see
\cite{csuros2004maximum,bengtsson2006computing}. For halfspaces in
\(\bbR^d\), the subproblem is weighted linear classification over halfspaces.
In fixed dimension, one can enumerate the combinatorially distinct halfspaces
induced by the pooled sample, yielding a polynomial-time algorithm in \(n\) for
fixed \(d\); the exponent depends on \(d\).

\begin{remark}[The unnormalized margin as a special case]\label{rem:unnorm-special-case}
Replacing the variance-adaptive margin \(\hup\) by the uniform DKW-type shift
\(t \mapsto t + \etaND\) amounts to setting
\[
\widetilde{f}_0(t) = \fnull(t + \etaND) - \etaND,
\]
which is already piecewise-linear whenever \(\fnull\) is. The entire argument
above then applies verbatim. In particular, the variance-adaptive margin
incurs no additional computational cost relative to the simpler uniform
margin.
\end{remark}

\section{Numerical experiments}\label{sec:experiments}

In this section, we perform experiments to illustrate the finite-sample behavior of the proposed procedures in three settings. We first study type~I error and power in the monotone likelihood ratio (MLR) model, where the theory is most transparent. We then examine the simultaneous confidence bands in the same setting. Finally, we study testing over the union-of-intervals classes $\mathcal I_K$, highlighting the power of the adaptive procedure.

\subsection{Type~I error and power in the MLR model}\label{sec:exp-mlr}

\paragraph{Data-generating mechanisms.}
  We consider two location families satisfying the MLR condition: 
  (a) Gaussian shift, $P=N(0,1)$ and $Q=N(\mu',1)$;
  and (b) Laplace shift, $P=\mathrm{Lap}(0,1)$ and $Q=\mathrm{Lap}(\mu',1)$.
  Let $\mu_0 = 1$. We consider a panel-specific benchmark $f_{0}$
  matched to the data-generating family: $f_{0}=G_{\mu_{0}}$ in the Gaussian
  panel and $f_{0}=L_{\mu_{0}}$ in the Laplace panel. Here
  $G_{\mu}(\alpha)=\Phi\!\bigl(\Phi^{-1}(1-\alpha)-\mu\bigr)$ is the
  $\mu$-Gaussian differential-privacy trade-off function of \cite{GDP}, and
  $L_{\mu}$ is the trade-off function of the Laplace shift
  $(\mathrm{Lap}(0,1),\mathrm{Lap}(\mu,1))$.

  The shift parameter in the data-generating process ranges over
  $\mu'\in\{0.0,0.5,0.8,1.0\}$ under the null and
  $\mu'\in\{1.2,1.5,2.0\}$ under the alternative. We vary the sample size $n$ from $100$ to $5000$.

  \begin{figure}[t]
    \centering
    \subfloat[$P=N(0,1)$, $Q=N(\mu',1)$]{%
      \includegraphics[width=0.48\textwidth]{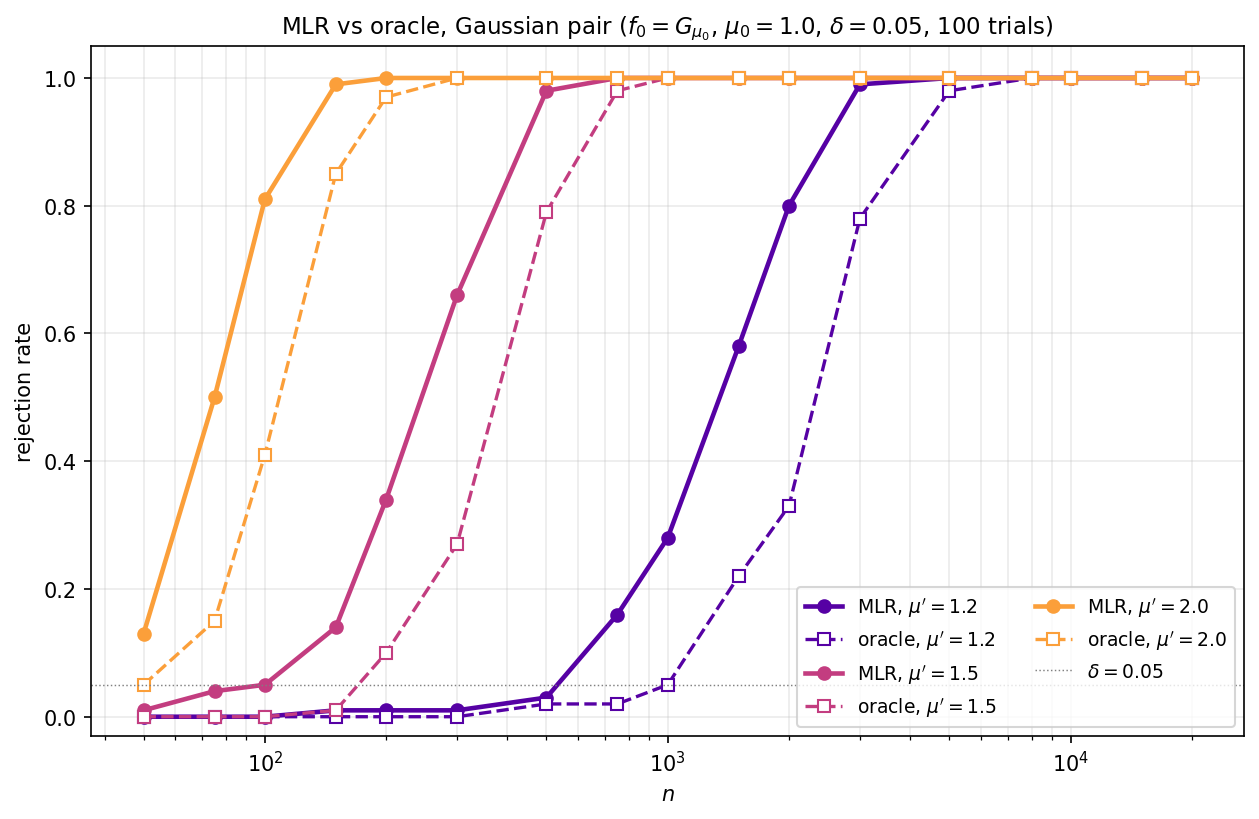}%
      \label{fig:exp-mlr-gauss}}
    \hfill
    \subfloat[$P=\mathrm{Lap}(0,1)$, $Q=\mathrm{Lap}(\mu',1)$]{%
      \includegraphics[width=0.48\textwidth]{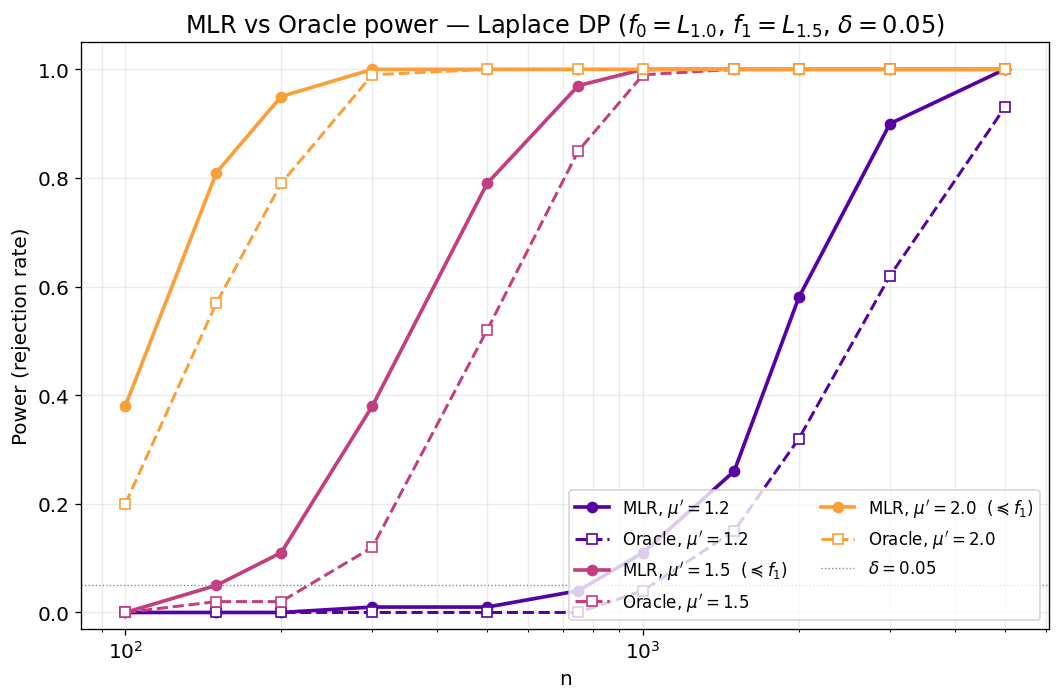}%
      \label{fig:exp-mlr-lap}}
    \caption{Empirical power of the MLR test (solid, filled circles) and the
      oracle test (dashed, open squares) against shift alternatives in the MLR
      model. The benchmark is $f_{0}=G_{\mu_{0}}$ in panel~(a) and $L_{\mu_{0}}$
      in panel~(b), with $\mu_{0}=1$ and $\delta=0.05$; each marker averages
      $100$ replications. Empirical type~I error is zero in every null cell and
      is omitted.}
    \label{fig:exp-mlr}
  \end{figure}

\paragraph{Procedures.}
  We compare two level-$\delta=0.05$ procedures: the \emph{MLR test} from
  \Cref{thm:mlr}, which scans the empirical Neyman--Pearson curve over
  $\mathcal{S}_{\mathrm{MLR}}$ with DKW margin
  $\eta_{n,\delta}=\sqrt{\log(4/\delta)/(2n)}$, and an \emph{oracle-aided test} that
  is given the most-violating witness\footnote{This oracle-aided test can be viewed as the idealized version of the test proposed in~\cite{DPAuditing}.}
  \[
  S^{\star}\in\arg\max_{S\in\mathcal{S}_{\mathrm{MLR}}}
  \bigl\{f_{0}\!\bigl(P(S)\bigr)-Q(S^{\complement})\bigr\}
  \]
  (equivalently, the most-violating likelihood-ratio threshold $t^{\star}$) and
  applies the one-witness corrected test
  \[
  Q_{n}\!\bigl((S^{\star})^{\complement}\bigr)+\eta_{1}
  \;<\;f_{0}\!\bigl(P_{n}(S^{\star})+\eta_{1}\bigr),
  \qquad
  \eta_{1}=\sqrt{\log(2/\delta)/(2n)}.
  \]
  The oracle is hypothetical and serves only as a benchmark. Each cell is
  replicated $100$ times. 

  \paragraph{Findings.}
  Across all null cells, both procedures are conservative: the empirical type~I
  error is zero throughout the simulation grid. Under the alternative, the MLR
  test shows a clear transition from low to high power as either $n$ or $\mu'$
  increases, and its performance is close to that of the oracle benchmark; see
  \Cref{fig:exp-mlr}. In some intermediate regimes, it is even slightly
  more powerful, which is consistent with the fact that it searches over the
  whole empirical Neyman--Pearson curve rather than comparing at a single level.
  The Laplace experiment shows the same qualitative behavior.

\subsection{Confidence bands in the MLR model}\label{sec:exp-mlr-band}

We next turn from testing to estimation and illustrate the simultaneous
confidence band of \Cref{cor:confidence-bands} specialized to
$\mathcal{S}_{\mathrm{MLR}}$, for which
$\mathrm{VC}(\mathcal{S}_{\mathrm{MLR}})=2$. The band is computed from the
witness pairs $\bigl(P_n(S),Q_n(S^\complement)\bigr)$ obtained by sweeping the
pooled order statistics, with corrections $h_{\pm}$ at level $\delta=0.05$.

\paragraph{Design.}
We reuse the Gaussian and Laplace shift families from
\Cref{sec:exp-mlr}, both with $\mu=1$. Sample sizes are
$n\in\{5\,000,\,10\,000,\,20\,000\}$, chosen large enough that the band is
visibly informative on the scale of the curve, and coverage is estimated
from $200$ Monte Carlo replications per cell on the grid
$\alpha\in\{0,0.005,\ldots,1\}$.

\begin{figure}[t]
  \centering
  \includegraphics[width=0.97\textwidth]{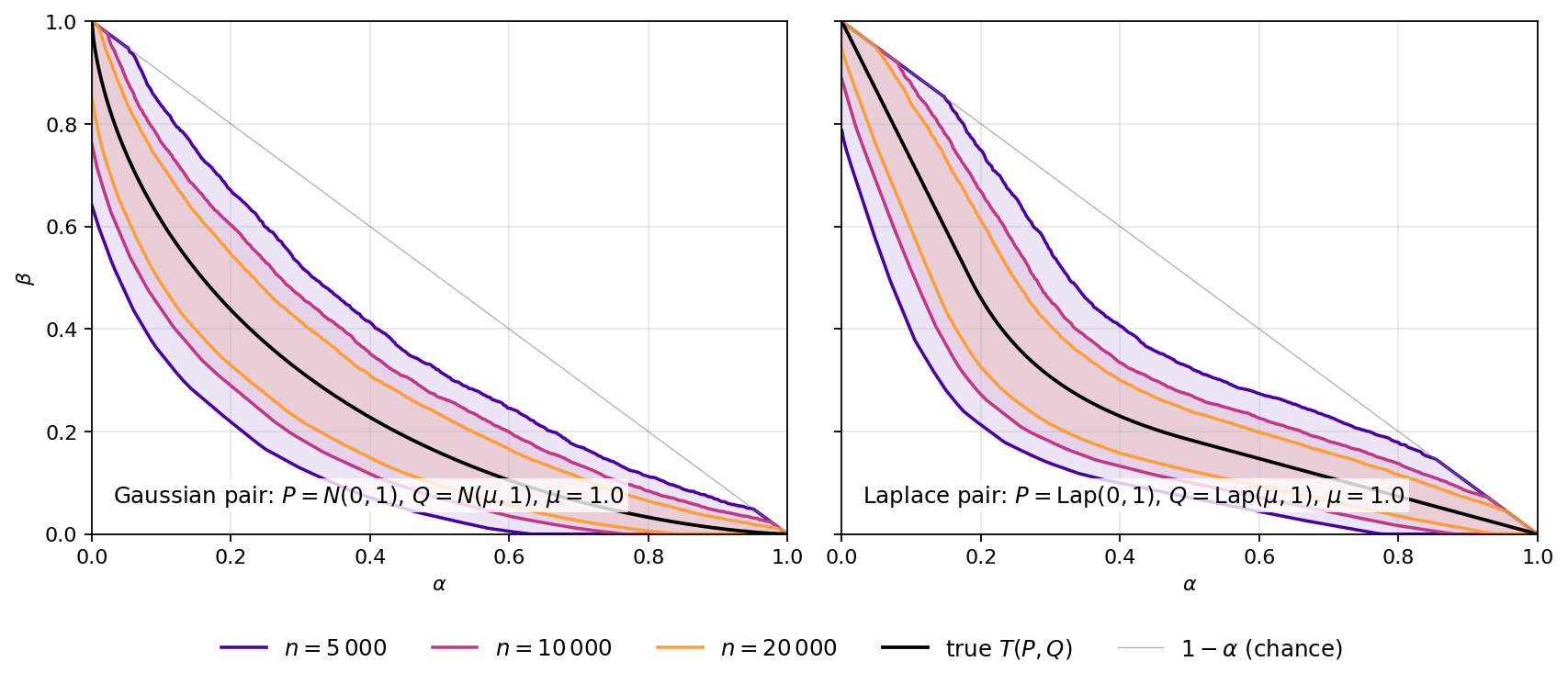}
  \caption{Simultaneous $(1-\delta)$-confidence bands for $\Toff{P}{Q}$ in the
    MLR setting, computed on a single sample from each pair. The shaded
    region is $[\hat L_{\mathrm{GCM}}(\alpha),\,\hat U(\alpha)]$; the
    dashed/dotted lines are the upper and lower envelopes; black is the
    analytical $T(P,Q)$. As~$n$ grows, the correction shrinks and the band
    concentrates around the true curve.}
  \label{fig:exp-mlr-band-demo}
\end{figure}

\paragraph{Findings.}
\Cref{fig:exp-mlr-band-demo} shows that the bands narrow with
increasing $n$ and are widest in the interior of the curve, as predicted by
the variance-adaptive correction. Across all $1{,}200$ replications the empirical
simultaneous coverage of
$\hat L_{\mathrm{GCM}}\leqslant T(P,Q)\leqslant\hat U$ equals $1.000$ in
every cell, above the nominal $1-\delta=0.95$. The conservatism is
inherited from the union bound underlying $\tau_{n,\delta}$ in
\Cref{lem:nuc}; sharpening the constants is left to future work.

\subsection{Testing under unknown union-of-intervals complexity}\label{sec:exp-iu}

We next study the union-of-intervals classes $\mathcal I_K$, for which
$\mathrm{VC}(\mathcal I_K)=2K$. If $(P,Q)$ is
$\mathcal I_{K^\star}$-attainable, then using $K<K^\star$ introduces bias
(no $S\in\mathcal I_K$ can witness the violation), while using $K>K^\star$
enlarges the margin $\tau_{n,\delta}$ and reduces power without any
expressive gain. The experiment below illustrates this bias--variance
trade-off and evaluates the adaptive procedure of
\Cref{cor:adaptive}.

\paragraph{Data-generating mechanism.}
We take $P=\mathrm{Unif}[0,1]$ and let $Q$ have piecewise-constant density
\[
q(x)=
\begin{cases}
1+\delta_q,& x\in\bigl[\tfrac{2j}{2K^\star},\,\tfrac{2j+1}{2K^\star}\bigr],\\
1-\delta_q,& x\in\bigl[\tfrac{2j+1}{2K^\star},\,\tfrac{2j+2}{2K^\star}\bigr],
\end{cases}
\qquad j=0,\ldots,K^\star-1,
\]
with $K^\star=2$, so the optimal witness is a union of two intervals. The
benchmark is $\fnull=g_\varepsilon$ with $\varepsilon=0.05$, and we sweep
$\delta_q\in\{0.85,0.95\}$ over
$n\in\{1000,1500,\ldots,5000,6000,7000,8000\}$ and $K\in\{1,2,3,4\}$. Each
cell is replicated $50$ times in the fixed-$K$ sweep and $100$ times in the
adaptive comparison.

\begin{figure}[t]
  \centering
  \includegraphics[width=0.85\textwidth]{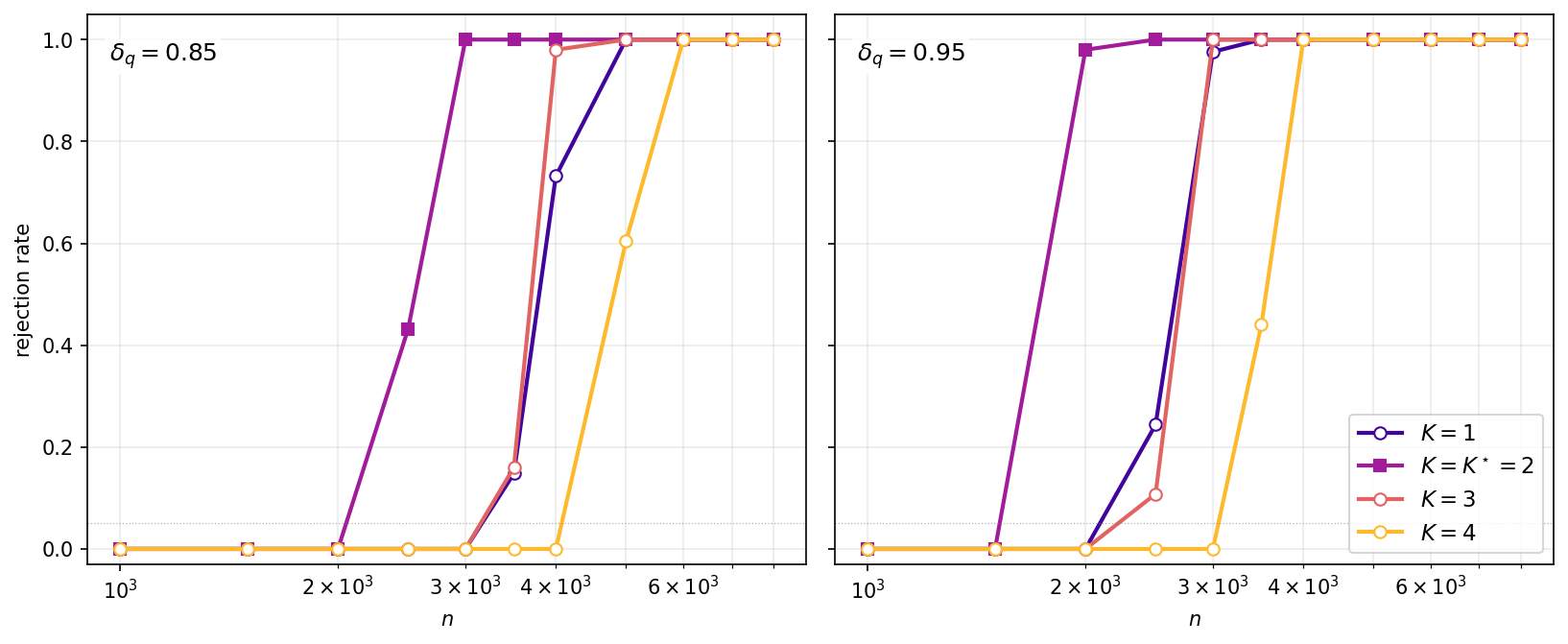}
  \caption{Effect of $K$ in the dual interval-union test for the
    piecewise-constant uniform pair, $K^{\star}=2$. $P=\mathrm{Unif}[0,1]$,
    $Q$ has two bumps of amplitude $\delta_{q}$, and the benchmark is
    $f_{0}=g_{\varepsilon}$ with $\varepsilon=0.05$. The matched choice
    $K=2$ saturates earliest; $K=1$ pays a bias cost, while $K\in\{3,4\}$
    pay a VC penalty.}
  \label{fig:exp-iu-pc2}
\end{figure}

\begin{figure}[htbp]
  \centering
  \includegraphics[width=0.97\textwidth]{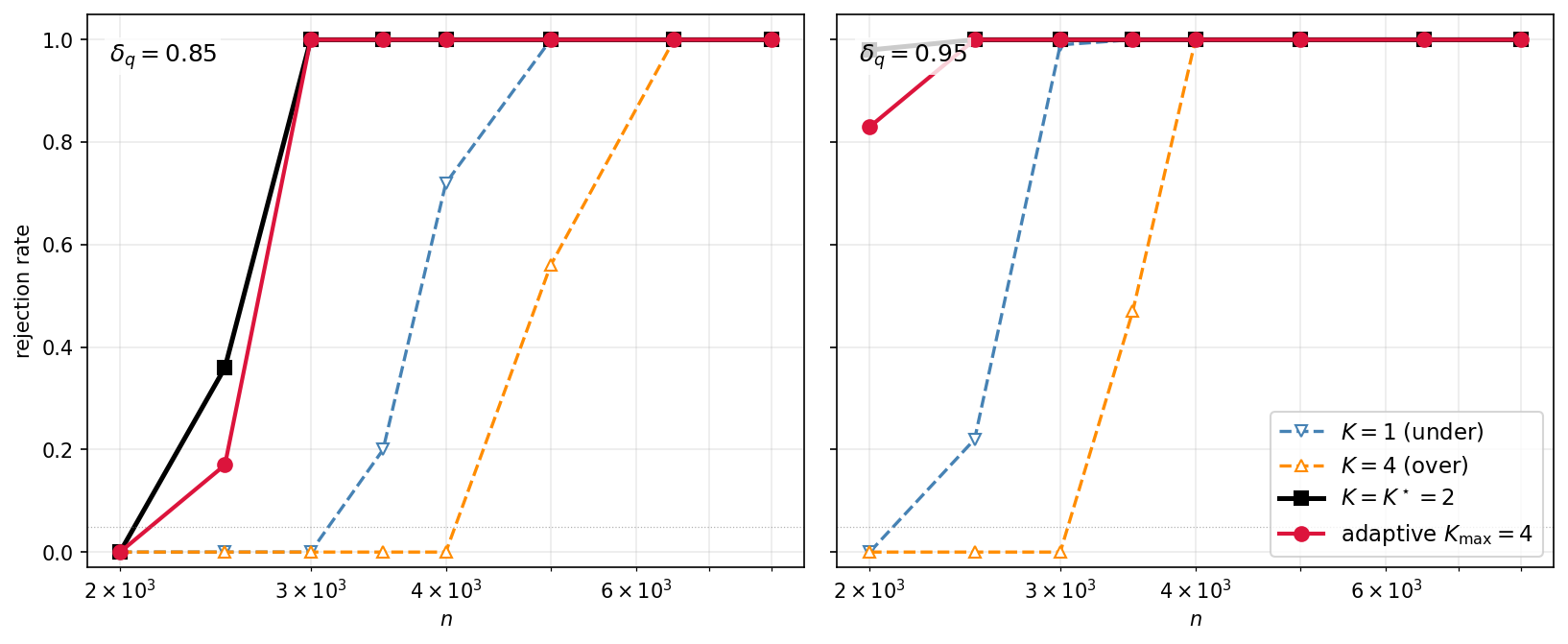}
  \caption{Adaptation under unknown union-of-intervals complexity
    ($K^{\star}=2$). Rejection rate against~$n$ for the matched test
    $K=2$, the under-specified test $K=1$, the over-specified test $K=4$,
    and the adaptive Bonferroni procedure of \Cref{cor:adaptive}
    with $K_{\max}=4$. The adaptive curve closely matches the oracle
    $K^{\star}$ curve in both panels.}
  \label{fig:exp-iu-adaptive}
\end{figure}

\paragraph{Findings for fixed $K$.}
Both bias and variance are visible in \Cref{fig:exp-iu-pc2}: the
matched choice $K=K^\star=2$ saturates first; $K=1$ pays a bias cost as it
cannot witness the union of bumps; $K\in\{3,4\}$ pay a VC penalty without
expressive gain.

\paragraph{Adaptation when $K^\star$ is unknown.}
We instantiate \Cref{cor:adaptive} with $K_{\max}=4$ on the
$K^\star=2$ piecewise-constant pair, and compare it with the matched test
$K=2$, the under-specified test $K=1$, and the over-specified test $K=4$ on
the same Monte Carlo draws. \Cref{fig:exp-iu-adaptive} shows that the
adaptive curve closely tracks the matched curve and clearly improves over the
mismatched fixed-$K$ baselines. The finite-sample adaptation cost is therefore
small in this regime.

\section{Related work}\label{sec:related-work}
Our work is most closely related to two lines of research: privacy auditing and tolerant distribution testing.

\paragraph{Privacy auditing.}
Finite-sample methods for auditing $(\varepsilon,\delta)$-DP typically
construct or select critical events in the privacy
definition~\cite{Auditing1,Auditing2,Auditing3,DP-Definition}, but the
$(\varepsilon,\delta)$ formulation is discontinuous: statistically
indistinguishable output distributions can yield opposite privacy
conclusions~\cite{Auditing-Impossibility}.  Recent work proposes auditing under
$f$-DP via density or likelihood-ratio
estimation~\cite{empirical-auditing-f-dp-1, empirical-auditing-f-dp-2,
DPAuditing}, avoiding this discontinuity but primarily providing asymptotic
guarantees for fixed distribution pairs. While these approaches establish
consistency of the estimated trade-off curve and, in some cases, pointwise
convergence rates of order $\widetilde{O}(n^{-1/2})$ for individual trade-off points~\cite{DPAuditing}, this rate
corresponds to the parametric regime in our setting when $\VC(\mathcal S)=O(1)$
and thus matches our bound. However, these results remain pointwise and do not
yield uniform non-asymptotic guarantees over the entire curve or a minimax
characterization. In contrast, our test targets the trade-off function directly
and provides non-asymptotic error control uniform over a nonparametric class,
with assumption-free type~I control that prevents false alarms even under model
misspecification.

\paragraph{TV testing.}
For discrete distributions on $[d]$, sharp minimax rates are known for identity and closeness testing under total variation~\cite{TV2008,TV2000,chan2014optimal,diakonikolas2019collision}. The tolerant testing framework~\cite{tolerant2001,tolerant2006,tolerant2025} considers the composite problem $d_{TV}(P,Q)\leqslant \varepsilon_1$ versus $d_{TV}(P,Q)> \varepsilon_2$, with gap $\Delta=\varepsilon_2-\varepsilon_1$. Its optimal sample complexity is now well understood~\cite{price2021tolerant}, implying that the minimal detectable gap satisfies $\Delta \gtrsim d^{1/4}n^{-1/2}$ in the classical regime and $\Delta \gtrsim \sqrt{d/n}$ in the worst case;  moreover, tolerant testing is intrinsically harder, with at most a quadratic gap from the non-tolerant setting~\cite{chakraborty2022gap}. These results are tailored to discrete domains and to the total-variation distance.

Our work subsumes total-variation tolerant testing as a special case, but takes a different perspective. Rather than focusing on one discrepancy on a discrete domain, we study testing of an arbitrary benchmark trade-off curve. This yields a common framework that applies to both discrete and continuous models and covers privacy-style benchmarks as well as total variation. In the discrete case, taking $\mathcal S=2^{[d]}$ gives $\VC(\mathcal S)=d$, and our sufficient condition recovers the worst-case tolerant scaling $\Delta \gtrsim \widetilde O(\sqrt{d/n})$. We do not recover the sharper $d^{1/4}n^{-1/2}$ scaling available in the special noiseless discrete setting, which is consistent with the fact that our results are designed as a general structural theory for trade-off testing rather than a minimax analysis specialized to discrete total-variation testing.

\section{Discussion}\label{sec:discussion}

This paper identifies the structural boundary between possibility and
impossibility for testing trade-off functions from finite samples. 
The answer
rests on two conditions---\(\mathcal{S}\)-attainability and finite VC
dimension---which together convert an infinite-dimensional inference problem
into one governed by uniform convergence over a structured set class. The
resulting test enjoys assumption-free type~I error control, achieves
minimax-optimal separation rates up to logarithmic factors for the MLR model class, and degrades
gracefully under model misspecification. We close by highlighting several
directions for future work.

\paragraph{Minimax rates for general \(\mathcal{S}\) classes.}
Our minimax lower bound (\Cref{thm:new-lower-bound}) is established for
half-intervals, the simplest nontrivial class with \(\VC(\mathcal{S})=2\).
Since the minimax risk over \(\cM_{\mathcal{S}}\) can only increase as
\(\mathcal{S}\) grows, this lower bound applies to any class containing
half-intervals. However, the upper bound depends on \(\VC(\mathcal{S})\)
through the parameter \(\tauND\), so when \(\VC(\mathcal{S})\) is large there
remains an implicit gap of order
$
\sqrt{\VC(\mathcal{S})\log n}
$
between the upper and lower bounds. A natural question is whether the
minimax-optimal separation rate for a general class \(\mathcal{S}\) depends on
\(\VC(\mathcal{S})\) only through \(\tauND\).

\paragraph{Choosing \(\mathcal{S}\) in practice.}
Our framework treats the set class \(\mathcal{S}\) as given, and the
theoretical results show that this choice mediates a bias--variance trade-off:
a richer class reduces the approximation error due to misspecification, but it
also inflates the estimation error through a larger VC dimension. In practice,
the choice of \(\mathcal{S}\) must therefore be guided by domain knowledge
about the mechanism generating \((P,Q)\).

\paragraph{Computation.}
We show that the test reduces to a
finite collection of cost-sensitive empirical risk minimization problems over
\(\mathcal S\). This makes the method efficient for several natural classes,
such as halfspaces in fixed dimension and unions of intervals on the line. For
more complex classes, however, the corresponding ERM problem may be
computationally difficult even when the class has finite VC dimension.
Understanding the statistical-computational trade-offs in trade-off testing,
and developing approximate optimization methods with valid error control, would
broaden the applicability of the framework.

\bibliography{citation}
\bibliographystyle{alpha}

\newpage
\appendix

\section{Preliminaries}

\subsection{Neyman--Pearson}
\begin{lemma}[Neyman--Pearson, general form]\label{lem:neyman-pearson}
Let $P,Q$ be probability measures on a measurable space $(\Omega,\mathcal F)$, and set
\[
\mu=\tfrac{1}{2}(P+Q),\qquad p=\frac{dP}{d\mu},\qquad q=\frac{dQ}{d\mu},
\]
so that $p,q\in[0,2]$ and $p+q=2$ $\mu$-a.e. For $\lambda\in[0,\infty]$, define
\[
S_{>\lambda}:=\{q>\lambda p\},\qquad S_{\geqslant\lambda}:=\{q\geqslant\lambda p\},
\]
with the conventions $S_{>\infty}:=\{p=0,\,q>0\}$ and $S_{\geqslant\infty}:=\{p=0\}$.
A \emph{test} is a measurable function $\phi:\Omega\to[0,1]$, with size $E_P[\phi]=\int\phi\,p\,d\mu$ and power $E_Q[\phi]=\int\phi\,q\,d\mu$.

For every $\alpha\in[0,1]$, the following hold.

\begin{enumerate}
\item[(a)] \emph{(Existence.)} There exist $\lambda\in[0,\infty]$ and $\gamma\in[0,1]$ such that the test
\[
\phi^\star=\mathbf 1_{S_{>\lambda}}+\gamma\bigl(\mathbf 1_{S_{\geqslant\lambda}}-\mathbf 1_{S_{>\lambda}}\bigr)
\]
satisfies $E_P[\phi^\star]=\alpha$.

\item[(b)] \emph{(Sufficiency.)} Any test $\phi^\star$ of the form in (a) is most powerful at its own size: for every test $\phi$,
\[
E_P[\phi]\leqslant E_P[\phi^\star]\quad\Longrightarrow\quad E_Q[\phi]\leqslant E_Q[\phi^\star].
\]

\item[(c)] \emph{(Necessity.)} If $\phi$ is most powerful at level $\alpha$ with $E_P[\phi]=\alpha$, then there exists $\lambda\in[0,\infty]$ such that
\[
\phi=\mathbf 1_{S_{>\lambda}}\quad\mu\text{-a.e. on } \{q\neq\lambda p\}.
\]
\end{enumerate}
\end{lemma}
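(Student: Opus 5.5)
The plan is to prove the three parts in order, working throughout with densities relative to $\mu$ and the distribution of the likelihood ratio. We have $p+q=2$ $\mu$-a.e., so $\{p=0\}=\{q=2\}$. It is convenient to define the ratio $L=q/p\in[0,\infty]$, with $L=\infty$ on $\{p=0\}$.

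For \emph{existence} in (a), consider the function $G(\lambda)=P(S_{>\lambda})$. It is non-increasing and right-continuous in $\lambda$. We have $G(0^-)=P(S_{\geqslant 0})=1$, and $G(\infty)=P(\{p=0\})=0$.
\begin{itemize}
\item For $\alpha\in(0,1]$, take $\lambda=\inf\{\lambda':G(\lambda')\leqslant\alpha\}\in[0,\infty]$. Right-continuity gives $P(S_{>\lambda})\leqslant\alpha$, and taking the left limit gives $P(S_{\geqslant\lambda})\geqslant\alpha$.
\item If $P(S_{\geqslant\lambda})=P(S_{>\lambda})$, set $\gamma=0$. Otherwise set
\[
\gamma=\frac{\alpha-P(S_{>\lambda})}{P(S_{\geqslant\lambda})-P(S_{>\lambda})}\in[0,1].
\]
\item For $\alpha=0$, take $\lambda=\infty$ and $\gamma=0$. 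Then $\phi^\star=\mathbf 1_{\{p=0,q>0\}}$, which has $P$-mass $0$.
\item When $\lambda=\infty$, the sets $S_{\geqslant\infty}$ and $S_{>\infty}$ are both subsets of $\{p=0\}$. Both therefore have $P$-measure $0$, and any $\gamma$ works.
\end{itemize}

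For \emph{sufficiency} in (b), use the standard pointwise inequality. If $\lambda<\infty$, then
\[
(\phi^\star-\phi)(q-\lambda p)\geqslant 0 \quad \mu\text{-a.e.}
\]
The reason is that $\phi^\star=1$ where $q>\lambda p$, $\phi^\star=0$ where $q<\lambda p$, and the product vanishes where $q=\lambda p$. Integrating gives
\[
E_Q\phi^\star-E_Q\phi\geqslant\lambda\,(E_P\phi^\star-E_P\phi)\geqslant 0
\]
whenever $E_P\phi\leqslant E_P\phi^\star$.

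The case $\lambda=\infty$ needs separate care:
\begin{itemize}
\item Here $\phi^\star$ equals $1$ on $\{p=0,q>0\}$, and by the conventions $\gamma$ acts only on $\{p=0,q=0\}$, which is $\mu$-null.
\item Write $E_Q\phi=\int_{p=0}\phi q\,d\mu+\int_{p>0}\phi q\,d\mu$.
\item The first term is at most $Q(p=0)=E_Q\phi^\star$.
\item The second term requires $E_P\phi>0$ unless $\phi=0$ a.e.\ on $\{p>0\}$. Since $E_P\phi\leqslant E_P\phi^\star=0$, indeed $\phi=0$ $\mu$-a.e.\ on $\{p>0\}$.
\item Hence $E_Q\phi\leqslant E_Q\phi^\star$.
\end{itemize}

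For \emph{necessity} in (c), let $\phi$ be most powerful with $E_P\phi=\alpha$. Take $\phi^\star$ from (a) at the same size $\alpha$. By (b), $E_Q\phi=E_Q\phi^\star$.
\begin{itemize}
\item If $\lambda<\infty$, then
\[
\int(\phi^\star-\phi)(q-\lambda p)\,d\mu=(E_Q\phi^\star-E_Q\phi)-\lambda(E_P\phi^\star-E_P\phi)=0.
\]
The integrand is nonnegative, so it vanishes $\mu$-a.e. Hence $\phi=\phi^\star=\mathbf 1_{S_{>\lambda}}$ a.e.\ on $\{q\neq\lambda p\}$.
\item If $\lambda=\infty$, then $\alpha=0$ by construction, except in the degenerate case that $P(S_{>\lambda'})>\alpha$ for all finite $\lambda'$ while $\alpha>0$. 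That case is impossible because $P(S_{>\lambda'})\to P(p=0)=0$.
\item So $E_P\phi=0$, which forces $\phi=0$ a.e.\ on $\{p>0\}$.
\item Equality of powers gives $\int_{p=0}\phi q\,d\mu=Q(p=0)$, so $\phi=1$ a.e.\ on $\{p=0,q>0\}$.
\item On $\{q\neq\infty\cdot p\}$, under the convention that this set is $\{p>0\}\cup\{p=0,q>0\}$ up to null sets, this is exactly $\phi=\mathbf 1_{S_{>\infty}}$.
\end{itemize}

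The main obstacle I expect is not the core Neyman--Pearson inequality but the boundary bookkeeping. This means making the $\lambda=\infty$ conventions consistent across (a)--(c), together with the monotonicity and continuity of $G$ at $\lambda\to\infty$. I would handle it by stating once that $\{p=0\}=\{q=2\}$ up to $\mu$-null sets, so every set of the form $\{p=0,q=0\}$ is discarded.
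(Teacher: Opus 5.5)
Your proof is correct. The paper states this lemma in its preliminaries without proof, as the classical Neyman--Pearson lemma, and your argument is the standard one: a quantile choice of $(\lambda,\gamma)$ via the right-continuous map $\lambda\mapsto P(S_{>\lambda})$, the pointwise inequality $(\phi^\star-\phi)(q-\lambda p)\geqslant 0$, and its equality case, with the $\lambda=\infty$ conventions handled properly by discarding the $\mu$-null set $\{p=0,q=0\}$. The one small imprecision is in (c): $E_Q\phi=E_Q\phi^\star$ follows from (b) together with the maximality of $\phi$, which supplies the reverse inequality, not from (b) alone.
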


\begin{lemma}[Witness Property]\label{lem:witness}
Let $(P,Q)$ be a pair of distributions in $\mathcal M_{\mathcal S}$ and $f$ be a convex, continuous, and non-increasing function. Suppose that $\Toff{P}{Q}(\alpha_0) < f(\alpha_0)$ for some $\alpha_0 \in [0,1]$ , then there exists $S^\star\in \mathcal S$ such that
\[
Q\bigl((S^\star)^\complement\bigr) < f\bigl(P(S^\star)\bigr).
\]
\end{lemma}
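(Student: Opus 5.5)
The plan is to go through \Cref{prop:reduction}, or more directly through the Neyman--Pearson lemma, and produce a single deterministic set $S^\star\in\mathcal S$ that violates $f$.

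Write $T=\Toff{P}{Q}$, $\mu=(P+Q)/2$, and, as in \Cref{lem:neyman-pearson}, $S_{>\lambda}=\{q>\lambda p\}$ and $S_{\geqslant\lambda}=\{q\geqslant \lambda p\}$. By part (a) of that lemma there exist $\lambda\in[0,\infty]$ and $\gamma\in[0,1]$ such that $\phi^\star=\mathbf 1_{S_{>\lambda}}+\gamma(\mathbf 1_{S_{\geqslant\lambda}}-\mathbf 1_{S_{>\lambda}})$ has size $\alpha_0$. By part (b) it is optimal, so $T(\alpha_0)=1-E_Q[\phi^\star]$. Introduce the two deterministic endpoint points
\[
(\alpha_-,\beta_-)=\bigl(P(S_{>\lambda}),\,Q(S_{>\lambda}^\complement)\bigr),\qquad (\alpha_+,\beta_+)=\bigl(P(S_{\geqslant\lambda}),\,Q(S_{\geqslant\lambda}^\complement)\bigr).
\]
By linearity in $\gamma$, the point $(\alpha_0,T(\alpha_0))$ is the convex combination $(1-\gamma)(\alpha_-,\beta_-)+\gamma(\alpha_+,\beta_+)$.

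Suppose, for contradiction, that $\beta_\pm\geqslant f(\alpha_\pm)$. Convexity of $f$ then gives
\[
T(\alpha_0)=(1-\gamma)\beta_-+\gamma\beta_+\geqslant (1-\gamma)f(\alpha_-)+\gamma f(\alpha_+)\geqslant f(\alpha_0),
\]
which contradicts the hypothesis $T(\alpha_0)<f(\alpha_0)$. Hence at least one endpoint violates $f$, namely $\beta_-<f(\alpha_-)$ or $\beta_+<f(\alpha_+)$.

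If the violating endpoint is the $S_{>\lambda}$ one, attainability (\Cref{def:attainability}) supplies $S^\star\in\mathcal S$ with $\mathbf 1_{S^\star}=\mathbf 1_{S_{>\lambda}}$ $\mu$-a.e. Since $P,Q\ll\mu$, this gives $P(S^\star)=\alpha_-$ and $Q((S^\star)^\complement)=\beta_-$, and we are done.

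\textbf{Main obstacle.} The difficulty is the other endpoint, because attainability only provides strict level sets $\{q>\lambda' p\}$. I would handle $S_{\geqslant\lambda}$ as a limit. For $\lambda<\infty$ and $\lambda>0$, the sets satisfy $S_{>\lambda'}\uparrow S_{\geqslant\lambda}$ up to $\mu$-null sets as $\lambda'\uparrow\lambda$, because $\bigcup_{\lambda'<\lambda}\{q>\lambda'p\}=\{q\geqslant\lambda p\}\cap\{p>0\}\cup\{p=0,q>0\}$, and $\{p=q=0\}$ is $\mu$-null. By continuity of measure, $P(S_{>\lambda'})\to\alpha_+$ and $Q(S_{>\lambda'}^\complement)\to\beta_+$. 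Since $f$ is continuous and the inequality $\beta_+<f(\alpha_+)$ is strict, some $\lambda'<\lambda$ also satisfies $Q(S_{>\lambda'}^\complement)<f(P(S_{>\lambda'}))$, and attainability at $\lambda'$ gives the required $S^\star$.

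Two edge cases remain.
\begin{itemize}
\item If $\lambda=0$, then $S_{\geqslant 0}$ is the whole space $\mu$-a.e., so $\mathcal X\in\mathcal S$ serves as the witness.
\item If $\lambda=\infty$, then $S_{\geqslant\infty}=\{p=0\}$, which is exactly the set covered by the paper's convention at $\lambda=\infty$ in \Cref{def:attainability}. (Note that $\{p=0,q>0\}$ and $\{p=0\}$ agree $\mu$-a.e.)
\end{itemize}
The argument uses only convexity and continuity of $f$; monotonicity of $f$ is not needed.
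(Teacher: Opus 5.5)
Your argument is essentially the paper's proof: the Neyman--Pearson decomposition of $(\alpha_0,\Toff{P}{Q}(\alpha_0))$ into the two endpoints, convexity of $f$ forcing one endpoint to violate $f$, attainability for $S_{>\lambda}$, and approximation of $S_{\geqslant\lambda}$ by $S_{>\lambda'}$ with $\lambda'\uparrow\lambda$, with $\mathcal X$ handling $\lambda=0$. Your explicit treatment of $\lambda=\infty$ and your remark that monotonicity of $f$ is unused are fine additions. One slip needs fixing: $\{q>\lambda' p\}$ shrinks as $\lambda'$ grows, so these sets decrease to $S_{\geqslant\lambda}$ and your displayed identity holds with $\bigcap_{\lambda'<\lambda}$, not $\bigcup_{\lambda'<\lambda}$ (the union is just the largest member of the family); continuity of measure from above (along a sequence $\lambda_n\uparrow\lambda$) then gives the same limits, and the rest of your argument goes through unchanged.
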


\begin{proof}
Let $\mu=(P+Q)/2$, $p=dP/d\mu$, and $q=dQ/d\mu$.
By the Neyman--Pearson lemma, there exist $\lambda \in [0, \infty]$ and $\gamma\in[0,1]$ such that the optimal level-$\alpha_0$ test is
\[
\psi=(1-\gamma)\mathbf 1_{S_{>\lambda}}+\gamma \mathbf 1_{S_{\geqslant \lambda}},
\]
where
\[
S_{>\lambda}:=\{q>\lambda p\},
\qquad
S_{\geqslant\lambda}:=\{q\geqslant\lambda p\}.
\]
Its type~I and type~II errors decompose as
\begin{equation}\label{eq:witness-decomp}
\bigl(\alpha_0,\,\Toff{P}{Q}(\alpha_0)\bigr)
=
(1-\gamma)\bigl(P(S_{>\lambda}),\,Q(S_{>\lambda}^\complement)\bigr)
+
\gamma\bigl(P(S_{\geqslant\lambda}),\,Q(S_{\geqslant\lambda}^\complement)\bigr).
\end{equation}

Convexity of $f$ gives
\[
f(\alpha_0)\leqslant (1-\gamma)f(P(S_{>\lambda}))+\gamma f(P(S_{\geqslant\lambda}));
\]
combined with $\Toff{P}{Q}(\alpha_0)<f(\alpha_0)$ and \eqref{eq:witness-decomp}, at least one of the two vertex inequalities must be strict:
\begin{equation}\label{eq:witness-dichotomy}
Q(S_{>\lambda}^\complement)<f\bigl(P(S_{>\lambda})\bigr)
\quad\text{or}\quad
Q(S_{\geqslant\lambda}^\complement)<f\bigl(P(S_{\geqslant\lambda})\bigr).
\end{equation}

\emph{Case A.} If the first inequality in \eqref{eq:witness-dichotomy} holds, then $S_{>\lambda}\in\cS$ by $\cS$-attainability and the conclusion holds with $S^\star=S_{>\lambda}$.

\emph{Case B.} Suppose only the second inequality in \eqref{eq:witness-dichotomy} holds.
If $\lambda=0$, then $S_{\geqslant\lambda}=\mathcal X\in\cS$, so the conclusion holds with $S^\star=\mathcal X$.
Now assume $\lambda>0$. Take any sequence $\lambda_n$ that increases to $\lambda$; set
\[
S_{>\lambda_n}:=\{q>(\lambda_n)p\}\in\cS;
\]
these sets decrease to $S_{\geqslant\lambda}$ as $\lambda_n \to \lambda$, so by continuity of probability and of $f$, the strict inequality at $S_{\geqslant\lambda}$ persists at $S_{>\lambda-\varepsilon}$ for all sufficiently small $\varepsilon>0$. The conclusion then holds with $S^\star=S_{>\lambda-\varepsilon}$.
\end{proof}

\begin{lemma}[Convex-hull envelope equals the greatest convex minorant]
\label{lem:convex-hull-gcm}
Let $S$ be a class of measurable sets containing $\varnothing$ and $X$, and define
\[
C:=\{(P(A),\,Q(A^{\complement})):\ A\in S\}\subset [0,1]^2.
\]
For $\alpha\in[0,1]$, define
\[
F(\alpha):=\inf_{A\in S:\,P(A)\leqslant \alpha} Q(A^{\complement}),
\qquad
G(\alpha):=\inf\{\beta:\ (\alpha,\beta)\in \operatorname{conv}(C)\}.
\]
Then $G=\operatorname{GCM}(F)$, 
where $\operatorname{GCM}(F)$ denotes the greatest convex minorant of $F$ on $[0,1]$.
\end{lemma}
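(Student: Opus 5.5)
We will prove $G=\operatorname{GCM}(F)$ by proving the two inequalities $G\leqslant \operatorname{GCM}(F)$ and $\operatorname{GCM}(F)\leqslant G$.

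\emph{Preliminary facts about $G$.} First, $G$ is well defined and finite on all of $[0,1]$. Since $\varnothing,X\in S$, the points $(0,1)$ and $(1,0)$ belong to $C$. For any $\alpha$, the convex combination $(1-\alpha)(0,1)+\alpha(1,0)=(\alpha,1-\alpha)$ lies in $\operatorname{conv}(C)$, so the infimum is over a nonempty set. Second, $G$ is convex. The set $\operatorname{conv}(C)$ is convex, so its lower boundary function is convex: if $(\alpha_i,\beta_i)\in\operatorname{conv}(C)$ for $i=1,2$, then every convex combination of these points also lies in $\operatorname{conv}(C)$. Taking infima gives $G(t\alpha_1+(1-t)\alpha_2)\leqslant tG(\alpha_1)+(1-t)G(\alpha_2)$.

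\emph{Step 1: $G\leqslant F$, hence $G\leqslant\operatorname{GCM}(F)$.} We first show $G$ is non-increasing. Let $\alpha<\alpha'$ and $(\alpha,\beta)\in\operatorname{conv}(C)$. Mix this point with $(1,0)\in C$, using the weight $t$ on $(\alpha,\beta)$ chosen so that $t\alpha+(1-t)=\alpha'$. The mixture is $(\alpha',t\beta)\in\operatorname{conv}(C)$ with $t\beta\leqslant\beta$, so $G(\alpha')\leqslant G(\alpha)$. Now take $A\in S$ with $P(A)\leqslant\alpha$. Since $(P(A),Q(A^\complement))\in C$, monotonicity gives $G(\alpha)\leqslant G(P(A))\leqslant Q(A^\complement)$. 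Taking the infimum over such $A$ yields $G\leqslant F$. Because $G$ is a convex minorant of $F$ and $\operatorname{GCM}(F)$ is the greatest such minorant, $G\leqslant\operatorname{GCM}(F)$.

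\emph{Step 2: $\operatorname{GCM}(F)\leqslant G$.} Fix $\alpha$ and $\beta$ with $(\alpha,\beta)\in\operatorname{conv}(C)$. Write the point as a finite convex combination $(\alpha,\beta)=\sum_i t_i\,(P(A_i),Q(A_i^\complement))$ with $A_i\in S$. By definition of $F$, each term satisfies $F(P(A_i))\leqslant Q(A_i^\complement)$. Let $H=\operatorname{GCM}(F)$; then $H\leqslant F$ and $H$ is convex. Convexity of $H$ gives
\[
H(\alpha)\leqslant\sum_i t_iH(P(A_i))\leqslant\sum_i t_iF(P(A_i))\leqslant\sum_i t_iQ(A_i^\complement)=\beta.
\]
Taking the infimum over all such $\beta$ gives $H(\alpha)\leqslant G(\alpha)$.

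\emph{Expected obstacle.} The only delicate point is that the greatest convex minorant must be well defined and finite. This holds because $F\geqslant 0$, so the constant function $0$ is a convex minorant of $F$ and $H$ exists. The statement uses $\operatorname{conv}$ rather than its closure, and the argument above never needs closedness, so no further care is required there.
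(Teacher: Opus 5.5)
Your proof is correct and follows essentially the same route as the paper: you get $G\leqslant F$ by mixing with the point $(1,0)$, packaged as monotonicity of $G$, and you get $\operatorname{GCM}(F)\leqslant G$ by applying convexity of the minorant to a finite convex combination, which is the paper's epigraph-containment argument written out via Jensen. One small improvement in your version is that you explicitly verify that $G$ is finite and convex, a step the paper only asserts when it concludes that $G$ is a convex minorant of $F$.
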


\begin{proof}
We first show that $G\leqslant F$ pointwise. Fix $\alpha\in[0,1]$ and $A\in S$ with
\[
x:=P(A)\leqslant \alpha,
\qquad
y:=Q(A^{\complement}).
\]
If $x=\alpha$, then $(\alpha,y)\in C\subset \operatorname{conv}(C)$, hence $G(\alpha)\leqslant y$.

If $x<\alpha$, use that $X\in S$, so $(1,0)\in C$. Set
\[
\theta:=\frac{1-\alpha}{1-x}\in[0,1].
\]
Then
\[
\theta(x,y)+(1-\theta)(1,0)=(\alpha,\theta y)\in \operatorname{conv}(C),
\]
so
\[
G(\alpha)\leqslant \theta y\leqslant y.
\]
Taking the infimum over all $A\in S$ with $P(A)\leqslant \alpha$ yields
\[
G(\alpha)\leqslant F(\alpha).
\]

Next we show that $G$ dominates every convex minorant of $F$. Let $g$ be any convex function on
$[0,1]$ such that $g\leqslant F$. For every $A\in S$,
\[
g(P(A))\leqslant F(P(A))\leqslant Q(A^{\complement}),
\]
since $A$ itself is admissible in the definition of $F(P(A))$. Hence
\[
C\subset \operatorname{epi}(g):=\{(u,v)\in[0,1]^2:\ v\geqslant g(u)\}.
\]
Because $g$ is convex, $\operatorname{epi}(g)$ is convex. Therefore
\[
\operatorname{conv}(C)\subset \operatorname{epi}(g).
\]
So for every $(\alpha,\beta)\in \operatorname{conv}(C)$,
\[
\beta\geqslant g(\alpha).
\]
Taking the infimum over such $\beta$ gives
\[
G(\alpha)\geqslant g(\alpha)
\qquad\forall \alpha\in[0,1].
\]

Thus $G$ is itself a convex minorant of $F$, and it dominates every convex minorant of $F$.
Therefore $G=\operatorname{GCM}(F)$.
\end{proof}

\subsection{Properties of $\hup$}
Simple calculations yield the explicit formula of $\hinv$:
\begin{align*}
    \hinv(t)=\begin{dcases}
        0,\quad &0\leqslant t\leqslant \tau_{n,\delta};\\
        t-\frac{\tau_{n,\delta}+\sqrt{\tau_{n,\delta}(4t-3\tau_{n,\delta})}}{2},\quad &\tau_{n,\delta}\leqslant t\leqslant 1\wedge(1/2+\sqrt{\tau_{n,\delta}/2}+\tau_{n,\delta});\\
        t-\frac{3\tau_{n,\delta}+\sqrt{\tau_{n,\delta}(4+5\tau_{n,\delta}-4t)}}{2},\quad &1\wedge(1/2+\sqrt{\tau_{n,\delta}/2}+\tau_{n,\delta})\leqslant t\leqslant 1.
        \end{dcases}
    \end{align*}
The following lemma collects the properties of $\hinv$ used throughout the appendix.

\begin{lemma}\label{lem:hinv-properties}
Write $A_t:=\{s\in[0,1]:h_+(s)\geqslant t\}$ for the super-level set of $h_+$. Then:
\begin{enumerate}[label=(\roman*)]
    \item for every $t\in[0,1]$, $A_t=[\hinv(t),1]$;
    \item $\hinv$ is convex and non-decreasing on $[0,1]$;
    \item for all $s,t\in[0,1]$, $h_+(s)<t\iff s<\hinv(t)$. In particular, since the ranges of $Q_n(S^\complement)$ and $\fnull(\cdot)$ lie in $[0,1]$,
    \[
        h_+(Q_n(S^\complement))<\fnull\circ h_+(P_n(S))\iff Q_n(S^\complement)<\hinv\circ\fnull\circ h_+(P_n(S)).
    \]
\end{enumerate}
\end{lemma}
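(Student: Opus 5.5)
The plan is to work directly from the shape of $h_+$ and derive (i)–(iii) in that order, each from the previous one. First I will record the elementary properties of $h_+$ on $[0,1]$. It is continuous, because $V$ is continuous and the outer minimum with $1$ is continuous. It is also non-decreasing. On $[0,1/2]$ the map $s\mapsto s+\sqrt{\tau_{n,\delta}s}+\tau_{n,\delta}$ is increasing. On $[1/2,1]$ the map $s\mapsto s+\sqrt{\tau_{n,\delta}(1-s)}+\tau_{n,\delta}$ has derivative $1-\tfrac12\sqrt{\tau_{n,\delta}/(1-s)}$. This derivative can turn negative only when $1-s<\tau_{n,\delta}/4$, and for such $s$ we have $s+\tau_{n,\delta}>1$, so the cap at $1$ is already active and $h_+\equiv 1$ there. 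Using the standing assumption $\tau_{n,\delta}\leqslant 1/2$, I will check that the two branches agree at $s=1/2$ and that no dip survives the cap. Hence $h_+$ is continuous and non-decreasing, with $h_+(0)=\tau_{n,\delta}$ and $h_+(1)=1$.

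For (i), fix $t\in[0,1]$. Since $h_+$ is non-decreasing, $A_t$ is an up-set (an interval reaching $1$) in $[0,1]$. It is nonempty because $h_+(1)=1\geqslant t$. It is closed because $h_+$ is continuous. Therefore $A_t=[\inf A_t,1]=[\hinv(t),1]$. For (iii), take complements in (i): $h_+(s)<t$ if and only if $s\notin A_t$, if and only if $s<\hinv(t)$. The displayed equivalence then follows by setting $s=Q_n(S^\complement)\in[0,1]$ and $t=\fnull(h_+(P_n(S)))\in[0,1]$.

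For (ii), monotonicity is immediate: $t\leqslant t'$ implies $A_{t'}\subseteq A_t$, so $\hinv(t)\leqslant\hinv(t')$. The main obstacle is convexity, and I will prove it from the explicit formula. On $[0,\tau_{n,\delta}]$ we have $\hinv\equiv 0$. On the middle piece, $\hinv(t)=t-\tfrac12\bigl(\tau_{n,\delta}+\sqrt{\tau_{n,\delta}(4t-3\tau_{n,\delta})}\bigr)$; the subtracted term is a concave square root of an affine function, so $\hinv$ is convex there. On the last piece the subtracted term is $\sqrt{\tau_{n,\delta}(4+5\tau_{n,\delta}-4t)}$, again concave, so $\hinv$ is convex there as well. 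Since $\hinv$ is continuous and non-decreasing, convexity on all of $[0,1]$ reduces to showing that the one-sided slopes do not drop at the two breakpoints. At $t=\tau_{n,\delta}$, the left slope is $0$ and the right slope of the middle piece is $1-\sqrt{\tau_{n,\delta}}/\sqrt{4t-3\tau_{n,\delta}}\big|_{t=\tau_{n,\delta}}=1-1=0$, so the slopes match. At the second breakpoint $t^\ast=1/2+\sqrt{\tau_{n,\delta}/2}+\tau_{n,\delta}=h_+(1/2)$, I will compare slopes via inverse derivatives. The left slope is $1/h_+'(1/2^-)$ and the right slope is $1/h_+'(1/2^+)$. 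Because $h_+'(1/2^-)=1+\tfrac12\sqrt{2\tau_{n,\delta}}\geqslant 1-\tfrac12\sqrt{2\tau_{n,\delta}}=h_+'(1/2^+)$, the slope of $\hinv$ increases across $t^\ast$, which gives convexity.

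Alternatively, I could argue abstractly: $\hinv$ is the inverse of $h_+$, and $h_+$ is concave on each piece with derivative decreasing across $1/2$, so $h_+$ is concave and increasing where it is below $1$, and the inverse of a concave increasing function is convex. I would keep this as a cross-check. The region where $h_+$ is capped at $1$ only affects $t=1$ and does not break convexity.
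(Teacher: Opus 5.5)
Your proof is correct, but it proves the convexity in (ii) by a more computational route than the paper. The paper first shows that $h_+$ is concave on $[0,1]$, using the negative second derivative on each half and the downward jump of the derivative at $1/2$. Everything then follows from that one fact:
\begin{itemize}
\item (i) holds because a super-level set of a continuous concave function is a closed interval containing $1$;
\item convexity in (ii) holds because $h_+\bigl(\lambda\hinv(t_1)+(1-\lambda)\hinv(t_2)\bigr)\geqslant\lambda t_1+(1-\lambda)t_2$, so the convex combination lies in $A_{\lambda t_1+(1-\lambda)t_2}$ and hence dominates its infimum.
\end{itemize}
This is your ``inverse of a concave increasing function is convex'' cross-check, made rigorous without any strict-monotonicity or inverse-function argument. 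It never uses the explicit formula for $\hinv$ or a breakpoint analysis, so it would survive changes to the margin's constants as long as $h_+$ stays continuous and concave with $h_+(1)\geqslant 1$.

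Your route gets (i) and (iii) from monotonicity and continuity of $h_+$ alone. That is more elementary, and it rests on your correct observation that the decreasing part of the uncapped branch near $s=1$ is hidden by the cap. You then get convexity by checking the explicit piecewise formula branch by branch. Your slope values $h_+'(1/2^\pm)=1\pm\sqrt{\tauND/2}$ are right; the paper's $1\pm\sqrt{\tau}/2$ is a harmless slip.

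Two points should be stated explicitly to make your version airtight:
\begin{itemize}
\item $h_+'(1/2^+)=1-\sqrt{\tauND/2}\geqslant 1/2>0$, which follows from $\tauND\leqslant 1/2$. You need this positivity to flip the inequality when taking reciprocals.
\item When $1/2+\sqrt{\tauND/2}+\tauND>1$, the second breakpoint lies outside $[0,1]$, so only the breakpoint at $t=\tauND$ needs checking.
\end{itemize}
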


\begin{proof}
We first note that $h_+$ is concave on $[0,1]$: it is continuous, with second derivative $h_+''(t)=-\sqrt{\tau}/t^{3/2}<0$ on $(0,1/2)$ and $h_+''(t)=-\sqrt{\tau}/(1-t)^{3/2}<0$ on $(1/2,1)$, and left/right derivatives at $1/2$ satisfy $h_+'(1/2^-)=1+\sqrt{\tau}/2>1-\sqrt{\tau}/2=h_+'(1/2^+)$.

\emph{Proof of (i).} Since $h_+(1)=1\geqslant t$, we have $1\in A_t$. By definition, $\inf A_t=\hinv(t)$. Since $h_+$ is concave, $A_t$ is a convex closed set, i.e., a closed interval, so $A_t=[\hinv(t),1]$.

\emph{Proof of (ii).} That $\hinv$ is non-decreasing follows from the definition of left-continuous inverse. For convexity, fix $\lambda\in[0,1]$ and $t_1,t_2\in[0,1]$. By (i), $h_+(\hinv(t_i))\geqslant t_i$ for $i=1,2$, so by concavity of $h_+$,
\[
    h_+\bigl(\lambda\hinv(t_1)+(1-\lambda)\hinv(t_2)\bigr)
    \geqslant \lambda h_+(\hinv(t_1))+(1-\lambda)h_+(\hinv(t_2))
    \geqslant \lambda t_1+(1-\lambda)t_2.
\]
Hence $\lambda\hinv(t_1)+(1-\lambda)\hinv(t_2)\in A_{\lambda t_1+(1-\lambda)t_2}$, so $\lambda\hinv(t_1)+(1-\lambda)\hinv(t_2)\geqslant \inf A_{\lambda t_1+(1-\lambda)t_2}=\hinv(\lambda t_1+(1-\lambda)t_2)$.

\emph{Proof of (iii).} Suppose $h_+(s)<t$. If $s\geqslant\hinv(t)$, then $s\in A_t$ by (i), giving $h_+(s)\geqslant t$---contradiction. Conversely, if $s<\hinv(t)$, then $s\notin A_t$ by (i), so $h_+(s)<t$. Specializing to $s=Q_n(S^\complement)$ and $t=\fnull\circ h_+(P_n(S))$ yields the second equivalence.
\end{proof}

Recall that 
\[
\widetilde{\fnull} = \hinv\circ\fnull\circ\hup.
\]
We have the following property of $\widetilde{\fnull}$.

\begin{lemma}\label{lem:tilde-f}
    The function $\widetilde{\fnull} : [0,\infty) \mapsto [0,1]$ is convex and non-increasing. 
\end{lemma}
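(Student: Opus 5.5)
The plan is to write $\widetilde\fnull$ as a composition of three monotone pieces with compatible curvature and apply the standard composition rules twice. The ingredients are: (a) $\hup$ is non-decreasing and concave on all of $[0,\infty)$ with values in $[0,1]$; (b) $\fnull$ is convex and non-increasing on $[0,1]$, since it is a trade-off function; (c) $\hinv$ is convex and non-decreasing on $[0,1]$, by \Cref{lem:hinv-properties}(ii). Given these, monotonicity is immediate: $\fnull\circ\hup$ is non-increasing (non-increasing after non-decreasing), and applying the non-decreasing $\hinv$ keeps $\widetilde\fnull$ non-increasing.

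\emph{Step 1: properties of $\hup$ on $[0,\infty)$.} The proof of \Cref{lem:hinv-properties} already shows that $\hup$ is concave on $[0,1]$. Since $\hup\leqslant 1$ everywhere and $\hup(1)=\min\{1+\tau_{n,\delta},1\}=1$, the maximum of $\hup$ on $[0,1]$ is attained at the right endpoint. A concave function on an interval whose maximum is attained at the right endpoint is non-decreasing there, so $\hup$ is non-decreasing on $[0,1]$. For $t\geqslant 1$ we have $V(t)=0$, hence $\hup(t)=\min\{t+\tau_{n,\delta},1\}=1$, so $\hup$ is constant equal to its maximal value on $[1,\infty)$.

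Gluing a non-decreasing concave function on $[0,1]$ to the constant $\sup\hup=1$ on $[1,\infty)$ preserves concavity. To see this, take any supporting line of $\hup|_{[0,1]}$ at a point of $[0,1]$. Its slope is $\geqslant 0$ by monotonicity, so on $[1,\infty)$ the line lies at or above its value at $1$, which is $\geqslant \hup(1)=1$. It therefore also dominates the constant piece. Supporting lines at points $t\geqslant 1$ can be taken horizontal at height $1$. Hence $\hup$ is the infimum of its supporting affine functions, i.e.\ concave on $[0,\infty)$.

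I expect this gluing, together with the behavior near $t=1$, to be the only delicate point. The uncapped expression $t+\sqrt{\tau_{n,\delta}(1-t)}+\tau_{n,\delta}$ has derivative tending to $-\infty$ as $t\uparrow 1$, so monotonicity genuinely relies on the cap at $1$. The argument above sidesteps this by using concavity together with the fact that the maximum is attained at $1$, instead of differentiating. In particular, we never need $\fnull$ outside $[0,1]$, because $\hup$ takes values in $[0,1]$.

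\emph{Step 2: convexity.} Let $g=\fnull\circ\hup$. For $a,b\geqslant 0$ and $\lambda\in[0,1]$, concavity of $\hup$ and the fact that $\fnull$ is non-increasing give
$g(\lambda a+(1-\lambda)b)\leqslant \fnull(\lambda\hup(a)+(1-\lambda)\hup(b))$. Convexity of $\fnull$ then bounds the right-hand side by $\lambda g(a)+(1-\lambda)g(b)$. So $g$ is convex with values in $[0,1]$.

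Next, using that $\hinv$ is non-decreasing and convex (\Cref{lem:hinv-properties}(ii)),
$\hinv(g(\lambda a+(1-\lambda)b))\leqslant \hinv(\lambda g(a)+(1-\lambda)g(b))\leqslant \lambda\hinv(g(a))+(1-\lambda)\hinv(g(b))$.
This proves that $\widetilde\fnull$ is convex. Finally, $\widetilde\fnull$ takes values in $[0,1]$ because $\hinv$ maps $[0,1]$ into $[0,1]$ by definition.
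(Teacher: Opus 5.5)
Your proof is correct and follows the same composition argument as the paper: concavity of $h_+$ combined with monotonicity and convexity of $f_0$, then monotonicity and convexity of $h_+^{\leftarrow}$ from \Cref{lem:hinv-properties}(ii). You also explicitly check that $h_+$ is non-decreasing and concave on all of $[0,\infty)$, which the paper uses tacitly after proving concavity only on $[0,1]$; in that check, the nonnegative-slope claim should be restricted to supporting lines at points of $(0,1)$, since at $t=1$ the restriction to $[0,1]$ admits supporting lines of negative slope, but your separate horizontal lines for $t\geqslant 1$ already cover that case.
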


\begin{proof}
    For any $\lambda\in[0,1]$ and $t_1,t_2 \geqslant 0$, since $\hup$ is concave, we have
\[
\hup ( \lambda t_1+(1-\lambda) t_2 ) \geqslant \lambda\hup(t_1)+(1-\lambda)\hup(t_2).
\]
In addition, we know that $\fnull$ is non-increasing and $\hinv$ is non-decreasing, the composition~$\hinv \circ \fnull$ is non-increasing. 
As a result, we have 
\begin{align*}
\hinv\circ\fnull\circ\hup(\lambda t_1+(1-\lambda) t_2)
&\leqslant \hinv\circ\fnull\bigl(\lambda\hup(t_1)+(1-\lambda)\hup(t_2)\bigr)\\
&\leqslant \hinv\bigl(\lambda\,\fnull\circ\hup(t_1)+(1-\lambda)\,\fnull\circ\hup(t_2)\bigr)\\
&\leqslant \lambda\,\hinv\circ\fnull\circ\hup(t_1)+(1-\lambda)\,\hinv\circ\fnull\circ\hup(t_2),
\end{align*}
Here the middle inequality uses the convexity of $\fnull$, and last one uses the convexity of $\hinv$. 
In all, this establishes the convexity of $\widetilde{\fnull}$.

It remains to prove that $\widetilde{\fnull}$ is non-increasing, which is an easy consequence of (1) $\hup$ is non-decreasing, $\fnull$ is non-increasing, and $\hinv$ is non-decreasing.
\end{proof}

\section{Proof of \Cref{thm:mlr}}
\label{sec:proof-mlr}

Apply \Cref{lemma:DKW} to \(X_{1:n}\sim P\) and \(Y_{1:n}\sim Q\) separately,
each with failure probability \(\delta/2\). By a union bound, with probability at
least \(1-\delta\),
\begin{equation}
\label{eq:good-event-mlr}
\tag{\(\mathcal E\)}
\sup_{S\in\mathcal S_{\mathrm{MLR}}}| {P_n(S)-P(S)} |\leqslant \etaND,
\qquad
\sup_{S\in\mathcal S_{\mathrm{MLR}}}| {Q_n(S)-Q(S)} | \leqslant \etaND,
\end{equation}
where
$
\etaND
=
\sqrt{\frac{1}{2n}\log\frac{4}{\delta}}.
$
We prove both claims on this good event \(\mathcal E\).

\paragraph{Type~I error control.}
Assume $\Toff{P}{Q}\succeq \fnull$. Then for every $S\in\smlr$,
\[
Q_n(S^\complement)+\etaND
\geqslant Q(S^\complement)
\geqslant \fnull\bigl(P(S)\bigr)
\geqslant \fnull\bigl(P_n(S)+\etaND\bigr),
\]
where the first and third inequalities use $\mathcal E$, the second uses the null hypothesis together with the definition of the trade-off function, and the third also uses that $\fnull$ is non-increasing. The rejection condition thus never occurs on $\mathcal E$, giving $\psi_{n,\delta}=0$ on $\mathcal E$ and hence $\errone(\psi_{n,\delta})\leqslant\delta$.

\paragraph{Type~II error control.}
Assume \((P,Q)\in\mathcal M_{\mathrm{MLR}}\) and \(\Toff{P}{Q}\nsucceq \falt\).
Since every continuous MLR pair is \(\smlr\)-attainable, we have
\[
(P,Q)\in\mathcal M_{\smlr}.
\]
Therefore \Cref{lem:witness} yields a set \(S^\star\in\smlr\) such that
\begin{equation}\label{eq:witness-properties}
0\leqslant Q\bigl((S^\star)^\complement\bigr)<\falt\bigl(P(S^\star)\bigr).
\end{equation}
The separation condition~\eqref{eq:mlr-sep} at $\alpha=P(S^\star)$ gives $\falt(P(S^\star))\leqslant\max\{\fnull(P(S^\star)+2\etaND)-2\etaND,\,0\}$; since $\falt(P(S^\star))>0$ by \eqref{eq:witness-properties}, the maximum is attained by the first branch, yielding
\begin{equation}\label{eq:sep-rewritten}
\falt\bigl(P(S^\star)\bigr)+2\etaND\leqslant \fnull\bigl(P(S^\star)+2\etaND\bigr).
\end{equation}
On $\mathcal E$, combining \eqref{eq:witness-properties}, \eqref{eq:sep-rewritten}, $P_n(S^\star)\leqslant P(S^\star)+\etaND$, and monotonicity of $\fnull$, we have
\[
Q_n\bigl((S^\star)^\complement\bigr)+\etaND
\leqslant Q\bigl((S^\star)^\complement\bigr)+2\etaND
<\falt\bigl(P(S^\star)\bigr)+2\etaND
\leqslant \fnull\bigl(P(S^\star)+2\etaND\bigr)
\leqslant \fnull\bigl(P_n(S^\star)+\etaND\bigr).
\]
Thus the rejection condition holds for $S^\star$, so $\psi_{n,\delta}=1$ on $\mathcal E$ and $\errtwo(\cM_{\smlr};\psi_{n,\delta})\leqslant\delta$.

\section{Proofs for \Cref{sec:general}}\label{app:general}

\subsection{Proof of \Cref{prop:reduction}}\label{sec:proof-convex-closure}

Let
\[
\cC := \{(P(S),\,Q(S^\complement)) : S\in\cS\}.
\]
We show that for every $\alpha\in[0,1]$,
\[
\Toff{P}{Q}(\alpha)
=
\inf\{\beta : (\alpha,\beta)\in \convclosure(\cC)\}.
\]
The proof is split into two inequalities.

\paragraph{Upper bound.}
Pick $(\alpha,\beta)\in\mathrm{conv}(\cC)$. By definition of the convex hull, there exist $k\geqslant 1$, convex weights $\lambda_1,\dots,\lambda_k\geqslant 0$ with $\sum_i\lambda_i=1$, and sets $S_1,\dots,S_k\in\cS$ with $(\alpha,\beta)=\sum_i\lambda_i(P(S_i),Q(S_i^\complement))$. The randomized test $\varphi:=\sum_i\lambda_i\mathbf 1_{S_i}$ takes values in $[0,1]$ and satisfies $E_P[\varphi]=\alpha$, $E_Q[1-\varphi]=\beta$, so $\Toff{P}{Q}(\alpha)\leqslant\beta$. Since this holds for every such $(\alpha,\beta)$ and $\Toff{P}{Q}$ is continuous, it also holds on $\convclosure(\cC)$, giving
\[
\Toff{P}{Q}(\alpha)\leqslant \inf\bigl\{\beta:(\alpha,\beta)\in\convclosure(\cC)\bigr\}.
\]

\paragraph{Lower bound.}
Let $\mu=(P+Q)/2$, $p=dP/d\mu$, and $q=dQ/d\mu$.
By the Neyman--Pearson lemma, there exist $r \geqslant 0$ and $\gamma \in [0,1]$ such that the test
\[
\varphi^\star(x) = \begin{cases}
1, & \text{if } q(x) > r\,p(x), \\[2pt]
\gamma, & \text{if } q(x) = r\,p(x), \\[2pt]
0, & \text{if } q(x) < r\,p(x),
\end{cases}
\]
satisfies
\[
E_P[\varphi^\star] = \alpha
\qquad\text{and}\qquad
E_Q[1-\varphi^\star] = \Toff{P}{Q}(\alpha).
\]
Writing
\[
S_{>}:=\{q>rp\},
\qquad
S_{\geqslant}:=\{q\geqslant rp\},
\]
we have
\[
\varphi^\star=(1-\gamma)\mathbf{1}_{S_{>}}+\gamma\mathbf{1}_{S_{\geqslant}},
\]
and therefore
\begin{equation}\label{eq:convex-decomp}
\bigl(\alpha,\; \Toff{P}{Q}(\alpha)\bigr)
=
(1-\gamma)\,\underbrace{\bigl(P(S_{>}),\, Q(S_{>}^\complement)\bigr)}_{\text{point } A}
\;+\;
\gamma\,\underbrace{\bigl(P(S_{\geqslant}),\, Q(S_{\geqslant}^\complement)\bigr)}_{\text{point } B}.
\end{equation}
By $\cS$-attainability, $S_{>}\in\cS$ (up to a $(P+Q)/2$-null set), so $A\in\cC$. The non-strict level set $S_{\geqslant}$ need not itself lie in $\cS$; we handle it by approximation.

If $r>0$, then for $\varepsilon\in(0,r)$ the set $S_{>}^{(\varepsilon)}:=\{dQ/dP>r-\varepsilon\}$ lies in $\cS$ by $\cS$-attainability and decreases to $S_{\geqslant}$ as $\varepsilon\downarrow 0$; continuity of probability yields $(P(S_{\geqslant}),Q(S_{\geqslant}^\complement))=\lim_{\varepsilon\to 0^+}(P(S_{>}^{(\varepsilon)}),Q((S_{>}^{(\varepsilon)})^\complement))$, so $B\in\overline{\cC}$. If $r=0$, then $S_{\geqslant}=\cX\in\cS$ by assumption and $B=(1,0)\in\cC$ directly. In either case, from~\eqref{eq:convex-decomp} we obtain $(\alpha, \Toff{P}{Q}(\alpha)) \in \convclosure(\mathcal{C})$, and therefore
\[
\Toff{P}{Q}(\alpha) \geqslant \inf\bigl\{\beta : (\alpha,\beta) \in \convclosure(\mathcal{C})\bigr\}.
\]
Combining with the upper bound completes the proof.

\subsection{Proof of \Cref{thm:vc-necessary}}\label{sec:VC-necessity}

We prove that if $\VC(\mathcal S)=\infty$, then no test can uniformly distinguish
\[
H_0=\{(P,Q)\in\cM_{\cS}: \Toff{P}{Q}\succeq \fnull\}
\qquad\text{from}\qquad
H_1=\{(P,Q)\in\cM_{\cS}: \Toff{P}{Q}\nsucceq \falt\}.
\]

Fix a positive integer $m$; we construct a null pair $(P,P)$ and alternative pairs $(P,Q_A)$ indexed by subsets $A\subseteq[m]$ of fixed size, then bound the testing error via a chi-squared calculation.

\paragraph{Step 1: construction.}
By the continuity of the trade-off function $\falt$ and $\falt(0) > 0$, we can pick some $\alpha_0\in(0,1)$ so that $\falt(\alpha_0)>0$. 
Consider any fixed positive integer $m$ obeying $m>1/\alpha_0$.
Since $\VC(\cS)=+\infty$, for any $m$, there exist mutually distinct points $\{x_i\}_{1 \leqslant i \leqslant m}\subset \mathbb{R}^d$ (or $\bbZ$) that can be shattered by $\cS$. Therefore, for any $A\subseteq [m]$, there exists $S_A\in\cS$ such that
\begin{align*}
    \{x_i:\ i\in[m]\}\bigcap S_A=\{x_i:\ i\in A\}.
\end{align*}

Now, let 
\begin{align*}
    P=\frac{1}{m}\sum_{i=1}^m \delta_{x_i}.
\end{align*}
Pick some integer $a\in[m]$, and define $Q_A:\, A\subseteq [m],\ \abs{A}=a$ by
\begin{align*}
    \dif Q_A(x)=\begin{dcases}
        \beta\cdot \dif P(x),\quad & x\in \cup_{i\in A} \{x_i\},\\
        \gamma\cdot \dif P(x),\quad & x\in \cup_{i\notin A} \{x_i\},
    \end{dcases}
\end{align*}
where $\beta$ and $\gamma$ are positive values to be determined later that satisfy
\begin{align*}
    \beta a+\gamma (m-a)=m.
\end{align*}
Note that
\begin{align*}
    \frac{\dif Q_A}{\dif P}(x)=\begin{dcases}
        \beta,\quad &x\in \cup_{i\in A} \{x_i\},\\
        \gamma,\quad &x\in \cup_{i\notin A} \{x_i\},\\
        0/0,\quad &\textnormal{otherwise}.
    \end{dcases}
\end{align*}
It is straightforward that every pair $(P,Q_A)$ is $\cS$-attainable since $\cup_{i\in A}\{x_i\}=S_A\in \cS$ in the $(P+Q_A)/2$-a.e. sense for any $A\subseteq [m]$.

From now on, we set $\beta=(1-\falt(\alpha_0))/(\alpha_0-1/m)$,  and $\abs{A}=\floor{\alpha_0 m}$.

\paragraph{Step 2: Null and alternative membership.}

Clearly, we have $(P,P)$ is within $H_0$. 
In this step, we aim to show that $(P, Q_A)$ lies in the alternative $H_1$.
To see this, consider the test $\phi_A=\indi\{X\in S_A\}$, which obeys
\begin{align*}
    P\phi_A&=P(S_A)=\abs{A}/m= \abs{A}/m, \\ Q(1-\phi_A)&=Q(S_A^\complement)=\gamma (1-\abs{A}/m)=1-\beta\abs{A}/m.
\end{align*}
Recall that $\abs{A} = \floor{\alpha_0 m}$ and $\beta=(1-\falt(\alpha_0))/(\alpha_0-1/m)$.
We have
\begin{align*}
    P\phi_A&=\floor{\alpha_0 m}/m\leqslant \alpha_0,\\
    Q(1-\phi_A)&=1-(1 - \falt(\alpha_0)) \cdot\frac{\floor{\alpha_0 m}}{ \alpha_0 m -1} < \falt(\alpha_0),
\end{align*}
where the strict inequality follows from the choice $m > 1/\alpha_0$.

In all, the test $\phi_A=\indi\{X\in S_A\}$ witnesses the fact that 
$\Toff{P}{Q_A}(\alpha_0)< \falt(\alpha_0)$, and hence $(P,Q_A)$ is in the alternative.

\paragraph{Step 3: chi-squared bound.}
We now specify that $a = \floor{\alpha_0 m}$. Let $\Pi$ denote the uniform distribution over
$\{A \subseteq [m] : |A| = a\}$. Since $(P,P)$ is in the null
and every $(P, Q_A)$ is in the alternative,
\begin{align}
\label{eq:vc-testing-lb}
\inf_\psi \Bigl\{\errone(\cM_\cS;\psi)
+ \errtwo(\cM_\cS;\psi)\Bigr\}
&\geqslant \inf_\psi \Bigl\{
P^{2n}( \psi)
+ \E_{A \sim \Pi}\bigl[(P^n \times Q_A^n)(1-\psi)\bigr]
\Bigr\} \nonumber\\
&= 1 - \TV\!\Bigl(P^{2n},
\E_{A \sim \Pi}[P^n \times Q_A^n]\Bigr) \nonumber\\
&\geqslant 1 - \sqrt{\tfrac12\,
\chi^2\!\Bigl(\E_{A \sim \Pi}[P^n \times Q_A^n] \,\big\|\, P^{2n}\Bigr)},
\end{align}
where the first line uses that the infimum over all tests of
$\mu_0(1-\psi) + \mu_1(\psi)$ equals $1 - \TV(\mu_0,\mu_1)$, and
the last line is the standard TV--$\chi^2$ bound.

For the chi-squared divergence, independence gives
\begin{align*}
1 + \chi^2\!\Bigl(\E_\Pi[P^n \times Q_A^n] \,\big\|\, P^{2n}\Bigr)
&= \E_{A,A' \sim \Pi}\Biggl[
\Biggl(\E_{X \sim P}\Bigl[
\frac{dQ_A}{dP}(X)\,\frac{dQ_{A'}}{dP}(X)
\Bigr]\Biggr)^{n}
\Biggr].
\end{align*}
Use the definitions of $Q_A, Q_{A'}$ and $P$ to obtain
\[
\E_{X \sim P}\Bigl[
\frac{dQ_A}{dP}(X)\,\frac{dQ_{A'}}{dP}(X)\Bigr]
= 1 + (\beta - \gamma)^2
\Bigl(\frac{|A \cap A'|}{m} - \bigl(\tfrac{a}{m}\bigr)^2\Bigr).
\]
Using $1 + u \leqslant e^u$ and taking expectations,
\begin{align*}
1 + \chi^2
&\leqslant \E_{A,A' \sim \Pi}\biggl[
\exp\biggl(
n(\beta - \gamma)^2\Bigl(
\frac{|A \cap A'|}{m} - \bigl(\tfrac{a}{m}\bigr)^2
\Bigr)
\biggr)\biggr].
\end{align*}
Under $\Pi$, $|A\cap A'|\sim\HyperGeom(m,a,a)$, whose MGF is dominated by that of $Z\sim\Binom(a,a/m)$ (by the convex ordering of hypergeometric and binomial with the same mean). Substituting the binomial MGF and applying $1+x\leqslant e^x$ and $e^t-t-1\leqslant t^2e^t$ in sequence,
\begin{align*}
1 + \chi^2
&\leqslant \E\!\left[\exp\!\left(n(\beta-\gamma)^2\Bigl(\tfrac{Z}{m}-\bigl(\tfrac{a}{m}\bigr)^2\Bigr)\right)\right]
= \exp\!\Bigl(-\tfrac{n(\beta-\gamma)^2 a^2}{m^2}\Bigr)\Bigl(1-\tfrac{a}{m}+\tfrac{a}{m}\,e^{(\beta-\gamma)^2 n/m}\Bigr)^{a}\\
&\leqslant \exp\!\Bigl(\tfrac{a^2}{m}\bigl(e^{(\beta-\gamma)^2 n/m}-(\beta-\gamma)^2 n/m-1\bigr)\Bigr)
\leqslant \exp\!\Bigl(\tfrac{a^2(\beta-\gamma)^4n^2}{m^3}\,e^{(\beta-\gamma)^2n/m}\Bigr).
\end{align*}
Since $\beta-\gamma=O_{\falt}(1)$ is bounded independently of $m$, the right-hand side tends to $1$ as $m\to\infty$, i.e., $\chi^2\to 0$. Together with~\eqref{eq:vc-testing-lb}, this completes the proof.

\subsection{Proof of \Cref{thm:general}}
\label{sec:proof-general}

Since \(\VC(\cS^\complement)=\VC(\cS)\), \Cref{lem:nuc} applies to both
\(\cS\) and \(\cS^\complement\). By a union bound, with probability at least
\(1-\delta\), the event
\[
\mathcal E
=
\Bigl\{
\forall S\in \cS\cup\cS^\complement:\ 
P(S)\leqslant \hup(P_n(S)),\ 
P_n(S)\leqslant \hup(P(S)),\ 
Q(S)\leqslant \hup(Q_n(S)),\ 
Q_n(S)\leqslant \hup(Q(S))
\Bigr\}
\]
holds.
We prove both claims on this good event \(\mathcal E\).

\paragraph{Type~I error control.}
Assume \(\Toff{P}{Q}\succeq \fnull\). For each \(S\in\cS\),
\[
\hup(Q_n(S^\complement))
\geqslant Q(S^\complement)
\geqslant \fnull(P(S))
\geqslant \fnull\bigl(\hup(P_n(S))\bigr),
\]
where the first and third inequalities use \(\mathcal E\), and the last one uses
that \(\fnull\) is nonincreasing. Therefore the rejection condition in
\eqref{eq:general-test} never occurs on \(\mathcal E\). Hence
$\errone(\psi_{n,\delta})\leqslant \delta$.

\paragraph{Type~II error control.}
Suppose $(P,Q)\in\cM_{\cS}$ and $\Toff{P}{Q}\nsucceq\falt$. 
Then there exists some $\alpha_0 \in [0,1]$ such that $\Toff{P}{Q}(\alpha_0) < \falt(\alpha_0)$.
\Cref{lem:witness} then yields $S^\star\in\cS$ with $Q((S^\star)^\complement)<\falt(P(S^\star))$.

The map $\hinv\circ\widetilde{\fnull}$ is non-increasing, since $\widetilde{\fnull}$ is non-increasing (\Cref{prop:equiv-surrogate-convex}) and $\hinv$ is non-decreasing (\Cref{lem:hinv-properties}(ii)).  
The separation assumption~\eqref{eq:general-sep}, the identity $\hinvinv\circ\fnull\circ\hupup=\hinv\circ\widetilde{\fnull}\circ\hup$, and the bound $\hup(P(S^\star))\geqslant P_n(S^\star)$ on $\mathcal E$ combine to give
\[
Q((S^\star)^\complement)
<\falt(P(S^\star))
\leqslant \hinvinv\circ\fnull\circ\hupup(P(S^\star))
=\hinv\circ\widetilde{\fnull}\circ\hup(P(S^\star))
\leqslant \hinv\circ\widetilde{\fnull}(P_n(S^\star)).
\]
By \Cref{lem:hinv-properties}(iii), this is equivalent to $\hup(Q((S^\star)^\complement))<\widetilde{\fnull}(P_n(S^\star))$. On $\mathcal E$ we have $Q_n((S^\star)^\complement)\leqslant\hup(Q((S^\star)^\complement))$, so $Q_n((S^\star)^\complement)<\widetilde{\fnull}(P_n(S^\star))$. \Cref{prop:equiv-surrogate-convex} then shows that the test~\eqref{eq:general-test} rejects on $\mathcal E$, giving $\errtwo(\cM_{\cS};\psi_{n,\delta})\leqslant\delta$.

\subsection{Proof of \Cref{cor:confidence-bands}}\label{sec:proof-confidence-bands}

By \Cref{lem:nuc}, with probability at least $1-\delta$ the event
\[
\mathcal E=\Bigl\{\forall S\in\cS\cup\cS^\complement,\ \mu\in\{P,Q\}:\ \hdown(\mu_n(S))\leqslant\mu(S)\leqslant\hup(\mu_n(S))\ \text{and}\ \hdown(\mu(S))\leqslant\mu_n(S)\leqslant\hup(\mu(S))\Bigr\}
\]
holds; we work on $\mathcal E$.

\paragraph{Confidence upper bound.}
For any $\alpha\in[0,1]$, by definition of the trade-off function, on the event $\cE$, we have 
\[
\Toff{P}{Q}(\alpha)\leqslant \inf_{S\in\cS:\,P(S)\leqslant\alpha}Q(S^\complement)
\leqslant \inf_{S\in\cS:\,\hup(P_n(S))\leqslant\alpha}\hup(Q_n(S^\complement)).
\]
Since trade-off functions are bounded by $1-\alpha$, this yields the claimed upper bound.

\paragraph{Confidence lower bound.}
Assume for now that $(P,Q) \in \cM_\cS$. Define $F(\alpha):=\inf_{S\in\cS:\,P(S)\leqslant\alpha}Q(S^\complement)$. 

\Cref{lem:convex-hull-gcm} gives
\[
T(P,Q)=\operatorname{GCM}(F).
\] On $\mathcal E$,
\[
F(\alpha)\geqslant
\left[
\inf_{A\in S:\,h_-(P_n(A))\leqslant \alpha} h_-(Q_n(A^{\complement}))
\right]\vee 0
=
\widehat L(\alpha).
\]
Thus $\widehat L_{\mathrm{GCM}}$ is a convex minorant of $F$. By maximality of the greatest convex
minorant,
\[
\widehat L_{\mathrm{GCM}}(\alpha)\leqslant \operatorname{GCM}(F)(\alpha)=T(P,Q)(\alpha)
\qquad\forall \alpha\in[0,1].
\]

\subsection{Proof of \Cref{cor:adaptive}}

Under $H_0$, the rejection event $\{\tilde{\psi}_{n,\delta}=1\}$ is the union $\bigcup_{k\in\bbN}\{\psi^{(k)}_{n,6\delta/(\pi^2k^2)}=1\}$, whose probability (by \Cref{thm:general}) is bounded from the above by $ \sum_{k\in\bbN}6\delta/(\pi^2k^2)=\delta$. The type~I error control is thus proved.

To see the adaptive type~II error control, we first notice that the non-rejection event $\{\tilde{\psi}_{n,\delta}=0\}$ is the intersection $\bigcap_{k\in\bbN}\{\psi^{(k)}_{n,6\delta/(\pi^2k^2)}=0\}$. Thus, for each $k\in\bbN$, we can write
\begin{align*}
    \sup_{(P,Q)\in\cM_{\cS_k}:\ \Toff{P}{Q}\nsucceq \falt} (P^n\times Q^n)(1-\tilde{\psi}_{n,\delta})&\leqslant  \sup_{(P,Q)\in\cM_{\cS_k}:\ \Toff{P}{Q}\nsucceq \falt} (P^n\times Q^n)(1-\psi^{(k)}_{n,6\delta/(\pi^2k^2)})\\
    &\leqslant 6\delta/(\pi^2k^2)\\
    &\leqslant \delta,
\end{align*}
where in the second-to-last line we have applied \Cref{thm:general} to $\cS_k$.

\section{Proofs for \Cref{sec:rates}}\label{app:rates}

\subsection{Proof of \Cref{prop:upper-rate}}\label{sec:proof-separation}

Throughout this section, write $V(t):=t\wedge(1-t)$ for $t\in[0,1]$, and for fixed $\delta\in(0,1)$ set
\[
L_{n,\delta}:=\VC(\cS)\log(2n)+\log(32/\delta),
\qquad
\tau:=\tau_{n,\delta}:=\frac{4L_{n,\delta}}{n}.
\]
Recall $h_+(t):=t+\sqrt{\tau\,V(t)}+\tau$ and $\hinv(t):=\inf\{s\in[0,1]: h_+(s)\geqslant t\}$. Whenever $\fnull$ appears at arguments $>1$, we use the standard extension $\fnull(t):=0$ for $t>1$.

Fix $\alpha\in[0,1]$. If $\fnull(\alpha)=0$, then $\falt(\alpha)=0$ as well since $0\leqslant \falt\leqslant \fnull$, and there is nothing to prove; assume henceforth $\fnull(\alpha)>0$. Set
\[
\Delta(\alpha):=\fnull(\alpha)-\hinvinv\circ \fnull\circ \hupup(\alpha)
=A_\alpha+B_\alpha,
\]
where
\begin{align*}
A_\alpha&:=\fnull(\alpha)-\fnull(\hupup(\alpha)),\\
B_\alpha&:=\fnull(\hupup(\alpha))-\hinvinv\circ \fnull\circ \hupup(\alpha).
\end{align*}

\begin{lemma}\label{lem:helper-lemma}
There exists a universal constant $C_0>0$ such that for all $x,t\in[0,1]$,
\begin{align}
\hupup(x)-x
&\leqslant C_0\bigl(\mgn{x}\bigr),
\label{eq:h-forward-bound}
\\
t-\hinvinv(t)
&\leqslant C_0\bigl(\mgn{t}\bigr).
\label{eq:h-backward-bound}
\end{align}
\end{lemma}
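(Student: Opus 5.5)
Both bounds will follow from one elementary fact: a single application of $\hup$ or $\hinv$ moves its argument by at most $\tau+\sqrt{\tau V(\cdot)}$. Composing twice then costs only a constant factor, because $V$ is $1$-Lipschitz and $\sqrt{\tau\cdot\tau}=\tau$.

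\textbf{Step 1: one application of $\hup$.} By definition, $\hup(x)-x\leqslant \tau+\sqrt{\tau V(x)}$ for $x\geqslant0$. We also have $|V(u)-V(x)|\leqslant |u-x|$ and $\sqrt{a+b}\leqslant\sqrt a+\sqrt b$. Hence, for $u=\hup(x)$,
\[
\sqrt{\tau V(u)}\leqslant \sqrt{\tau V(x)}+\sqrt{\tau\bigl(\tau+\sqrt{\tau V(x)}\bigr)}\leqslant \sqrt{\tau V(x)}+\tau+\tfrac12\bigl(\tau+\sqrt{\tau V(x)}\bigr),
\]
where the last step is AM--GM applied to $\sqrt{\tau\cdot\sqrt{\tau V(x)}}$. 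When $u$ is capped at $1$ we simply have $V(u)=0$, so the bound still holds.

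\textbf{Step 2: composing $\hup$ twice.} Write $\hupup(x)-x=[\hup(u)-u]+[u-x]$. The first bracket is at most $\tau+\sqrt{\tau V(u)}$, which Step~1 bounds by a constant multiple of $\tau+\sqrt{\tau V(x)}$. The second bracket is at most $\tau+\sqrt{\tau V(x)}$ directly. This gives \eqref{eq:h-forward-bound} with, for example, $C_0=5$.

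\textbf{Step 3: one application of $\hinv$.} For $t\in[0,1]$, set $s=\hinv(t)$. If $s=0$, then $t\leqslant\tau$ by the explicit formula for $\hinv$, so $t-s\leqslant\tau$. If $s>0$, then by continuity of $\hup$ and Lemma~\ref{lem:hinv-properties}(i) we have $\hup(s)=t$; the cap $\hup(s)=1$ with $s<1$ is handled the same way, because there $t\leqslant\hup(s)$ gives an inequality in the right direction. Therefore
\[
t-s\leqslant \hup(s)-s\leqslant \tau+\sqrt{\tau V(s)}.
\]
Since $|s-t|\leqslant \tau+\sqrt{\tau V(s)}$, the same $V$-Lipschitz and AM--GM argument as in Step~1, now with the roles of $s$ and $t$ exchanged, converts $V(s)$ into $V(t)$ at the cost of a constant. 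This gives $t-\hinv(t)\leqslant C(\tau+\sqrt{\tau V(t)})$.

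Carrying out this exchange is the main point needing care. We want to bound $\sqrt{\tau V(s)}$ in terms of $V(t)$, while the gap $|s-t|$ is itself controlled only through $V(s)$. I would resolve this by absorption. Put $d=t-s\geqslant0$. Then
\[
d\leqslant \tau+\sqrt{\tau(V(t)+d)}\leqslant \tau+\sqrt{\tau V(t)}+\sqrt{\tau d}\leqslant \tau+\sqrt{\tau V(t)}+\tfrac{d}{2}+\tfrac{\tau}{2},
\]
which yields $d\leqslant 3\tau+2\sqrt{\tau V(t)}$.

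\textbf{Step 4: composing $\hinv$ twice.} Let $v=\hinv(t)$. Then
\[
t-\hinvinv(t)=[t-v]+[v-\hinv(v)]\leqslant C(\tau+\sqrt{\tau V(t)})+C(\tau+\sqrt{\tau V(v)}).
\]
Applying the Lipschitz bound $V(v)\leqslant V(t)+|t-v|$ once more, followed by AM--GM, bounds the second term by a constant multiple of $\tau+\sqrt{\tau V(t)}$. This gives \eqref{eq:h-backward-bound}, and we take $C_0$ to be the larger of the constants obtained for the two bounds.
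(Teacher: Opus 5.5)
Your proposal is correct and is essentially the paper's proof. The forward bound uses the $1$-Lipschitz property of $V$ together with $2\sqrt{ab}\leqslant a+b$, giving $3.5\tau+2.5\sqrt{\tau V(x)}$. The backward bound uses $t-\hinv(t)\leqslant \hup(\hinv(t))-\hinv(t)$, the same absorption to get $t-\hinv(t)\leqslant 3\tau+2\sqrt{\tau V(t)}$, and a second application at $\hinv(t)$. The case split in your Step~3 is unnecessary, since $\hup(\hinv(t))\geqslant t$ holds in every case by Lemma~\ref{lem:hinv-properties}(i).
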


\paragraph{Bound on $A_\alpha$.} By \eqref{eq:h-forward-bound} and monotonicity of $\fnull$,
\begin{equation}
A_\alpha
\leqslant \fnull(\alpha)-\fnull\!\left(\alpha+C_0\bigl(\mgn{\alpha}\bigr)\right).
\label{eq:A-alpha-bound}
\end{equation}

\paragraph{Bound on $B_\alpha$.} Let $y_\alpha:=\fnull(\hupup(\alpha))\in[0,1]$, so $B_\alpha=y_\alpha-\hinvinv(y_\alpha)$. 
Since $\hinv$ is positive and $\fnull$ is monotone, we have $B_\alpha \leqslant \fnull(\alpha)$.

In addition, since $V$ is $1$-Lipschitz and $y_\alpha=\fnull(\alpha)-A_\alpha$, we have $V(y_\alpha)\leqslant V(\fnull(\alpha))+A_\alpha$; combined with \eqref{eq:h-backward-bound} and $\sqrt{a+b}\leqslant\sqrt a+\sqrt b$,
\begin{align*}
B_\alpha
&\leqslant C_0\bigl(\mgn{y_\alpha}\bigr)
\leqslant C_0\Bigl(\tau+\sqrt{\tau\,V(\fnull(\alpha))}+\sqrt{\tau\,A_\alpha}\Bigr).
\end{align*}
Using $2\sqrt{ab}\leqslant a+b$ on $\sqrt{\tau\,A_\alpha}\leqslant \tfrac12 A_\alpha+\tfrac12\tau$,
\begin{equation}
B_\alpha
\leqslant
C_0\left(\tfrac32\tau+\sqrt{\tau\,V(\fnull(\alpha))}+\tfrac12 A_\alpha\right) \wedge \fnull(\alpha).
\label{eq:B-alpha-bound}
\end{equation}

\paragraph{Conclusion.} Combining \eqref{eq:A-alpha-bound}--\eqref{eq:B-alpha-bound} and absorbing constants into a universal $C'$,
\[
\Delta(\alpha)
\leqslant
C'\Bigl[
\fnull(\alpha)-\fnull\!\left(\alpha+C_0\bigl(\mgn{\alpha}\bigr)\right)
+\sqrt{\tau\,V(\fnull(\alpha))} \wedge \fnull (\alpha)+\tau \wedge \fnull (\alpha)
\Bigr].
\]
For simplicity, assume $C_0 > 2$. Note that if $\fnull(\alpha) < \tau$ or $V(\fnull(\alpha)) \geqslant \tau$, then
\[
\tau \wedge \fnull(\alpha)
\leqslant
\sqrt{\tau\,V(\fnull(\alpha))} \wedge \fnull(\alpha).
\]
Otherwise, we have $\fnull(\alpha) > 1 - \tau$. In this case,
\begin{align*}
\fnull(\alpha) - \fnull\!\left(\alpha + C_0 \bigl(\mgn{\alpha}\bigr)\right)
&\geqslant 1 - \tau - \max\{0,\, 1 - (\alpha + C_0 \bigl(\mgn{\alpha}\bigr))\} \\
&\geqslant \min\{1 - \tau,\; (C_0 - 1)\tau\} \\
&\geqslant \tau,
\end{align*}
where the last inequality uses $C_0 > 2$ and $\tau < \tfrac{1}{2}$.

\noindent Therefore,
\[
\tau \wedge \fnull(\alpha)
\leqslant
\fnull(\alpha) - \fnull\!\left(\alpha + C_0 \bigl(\mgn{\alpha}\bigr)\right)
+ \sqrt{\tau\,V(\fnull(\alpha))} \wedge \fnull(\alpha).
\]
And thus we have,
\[
\Delta(\alpha)
\leqslant
2C'\Bigl[
\fnull(\alpha)-\fnull\!\left(\alpha+C_0\bigl(\mgn{\alpha}\bigr)\right)
+\sqrt{\tau\,V(\fnull(\alpha))} \wedge \fnull (\alpha)
\Bigr].
\]
Note that since $\fnull$ is convex, for any $\Delta > 0$ and any $C_1 > 1$, we have
\[
\fnull(\alpha)-\fnull\!\left(\alpha+C_1\Delta\right)\leqslant C_1(\fnull(\alpha)-\fnull\!\left(\alpha+\Delta\right)).
\]
It follows that
\begin{align*}
\fnull(\alpha)-\fnull\!\left(\alpha+C_0\Bigl(\tauND+\sqrt{\tauND\,V(\alpha)}\Bigr)\right)
&\leqslant C_0\left( \fnull(\alpha)-\fnull\!\left(\alpha+\tauND+\sqrt{\tauND\,V(\alpha)}\right)\right),
\end{align*}
Setting $C = 2C'\cdot C_0$, we obtain
\begin{align}\label{eq:gap-upper-rate-proof}
\Delta(\alpha)
\leqslant
C \Bigl[
\fnull(\alpha) - \fnull\!\left(\alpha +\tauND+\sqrt{\tauND\,V(\alpha)}\right)
+ \sqrt{\tauND\, V(\fnull(\alpha))} \wedge \fnull(\alpha)
\Bigr].
\end{align}
Hence \eqref{eq:upper-rate-clean} implies $\falt(\alpha)\leqslant \fnull(\alpha)-\Delta(\alpha)\leqslant \hinvinv\circ \fnull\circ \hupup(\alpha)$, the desired separation condition.

\begin{proof}[Proof of \Cref{lem:helper-lemma}]
Throughout the proof we use repeatedly that $V$ is $1$-Lipschitz on $[0,1]$, that $\sqrt{a+b}\leqslant\sqrt a+\sqrt b$, and that $2\sqrt{ab}\leqslant a+b$.

\paragraph{Forward bound \eqref{eq:h-forward-bound}.} Since $V$ is $1$-Lipschitz,
$V(h_+(x))\leqslant V(x)+(h_+(x)-x)=V(x)+\sqrt{\tau\,V(x)}+\tau$. Hence
\begin{align*}
\hupup(x)-x
&\leqslant 2\tau+\sqrt{\tau\,V(x)}+\sqrt{\tau\,V(h_+(x))}\\
&\leqslant 2\tau+2\sqrt{\tau\,V(x)}+\sqrt{\tau\bigl(\sqrt{\tau\,V(x)}+\tau\bigr)}\\
&\leqslant 2\tau+2\sqrt{\tau\,V(x)}+\bigl(\tau^{3/4}V(x)^{1/4}+\tau\bigr)
\leqslant 4\tau+3\sqrt{\tau\,V(x)},
\end{align*}
where the last step uses $2\sqrt{ab}\leqslant a+b$ with $a=\tau,b=\sqrt{\tau V(x)}$. So \eqref{eq:h-forward-bound} holds with $C_0=4$.

\paragraph{Backward bound \eqref{eq:h-backward-bound}.} Write
$d_1(t):=t-\hinv(t)$ and $d_2(t):=\hinv(t)-\hinvinv(t)$.
Since $h_+(s)\geqslant s$, we have $\hinv(t)\leqslant t$; since $h_+(\hinv(t))\geqslant t$ by definition,
\[
d_1(t)\leqslant h_+(\hinv(t))-\hinv(t)=\tau+\sqrt{\tau\,V(\hinv(t))}\leqslant \tau+\sqrt{\tau\,V(t)}+\sqrt{\tau\,d_1(t)},
\]
using $V(\hinv(t))\leqslant V(t)+d_1(t)$. Applying $\sqrt{\tau\,d_1}\leqslant\tfrac12 d_1+\tfrac12\tau$ and solving,
\begin{equation}
d_1(t)\leqslant 3\tau+2\sqrt{\tau\,V(t)}\leqslant 4\bigl(\mgn{t}\bigr).
\label{eq:d1-bound}
\end{equation}
The same argument with $t$ replaced by $\hinv(t)$ yields $d_2(t)\leqslant 4\bigl(\mgn{\hinv(t)}\bigr)$, and $V(\hinv(t))\leqslant V(t)+d_1(t)$ then gives, via $\sqrt{\tau\,d_1(t)}\leqslant\sqrt{4\tau(\tau+\sqrt{\tau V(t)})}\leqslant 2\tau+2\sqrt{\tau V(t)}$ (by \eqref{eq:d1-bound}),
\[
d_2(t)\leqslant 4\Bigl(\tau+\sqrt{\tau\,V(t)}+\sqrt{\tau\,d_1(t)}\Bigr)\leqslant 12\bigl(\mgn{t}\bigr).
\]
Adding: $t-\hinvinv(t)=d_1(t)+d_2(t)\leqslant 16\bigl(\mgn{t}\bigr)$. So \eqref{eq:h-backward-bound} holds with $C_0=16$.
\end{proof}

\subsection{Proof of \Cref{cor:tvtolerant-vc}}
\label{sec:proof-tvtolerant-vc}
Let \(g_\varepsilon(\alpha):=(1-\varepsilon-\alpha)_+\) for \(\alpha\in[0,1].\)

\noindent For the tolerant total-variation problem, the benchmarks are
\[
\fnull=g_{\varepsilon_1},
\qquad
\falt=g_{\varepsilon_2}.
\]
Recall that
\[
\dTV(P,Q)\leqslant \varepsilon
\quad\Longleftrightarrow\quad
\Toff{P}{Q}\succeq g_\varepsilon .
\]
Thus it suffices to verify the separation condition in
\Cref{thm:general}.

\noindent Write
\[
\tau:=\tau_{n,\delta},
\qquad
m:=\tau+\sqrt{\tau}.
\]
Since \(V(t)\leqslant 1\) for all \(t\in[0,1]\), we have
\[
\hup(t)\leqslant t+m,
\qquad t\in[0,1].
\]
Consequently,
\[
\hup\circ\hup(\alpha)\leqslant \alpha+2m,
\qquad \alpha\in[0,1].
\]
Moreover, from \(\hup(s)\leqslant s+m\), every \(s\) satisfying
\(\hup(s)\geqslant t\) must obey \(s\geqslant t-m\). Hence
\[
\hinv(t)\geqslant (t-m)_+,
\qquad t\in[0,1],
\]
and therefore
\[
\hinv\circ\hinv(t)\geqslant (t-2m)_+.
\]

We now prove the separation condition. If
\(g_{\varepsilon_2}(\alpha)=0\), then the desired inequality is immediate
because the right-hand side is nonnegative. It remains to consider
\(\alpha\) such that \(g_{\varepsilon_2}(\alpha)>0\), namely
\[
\alpha<1-\varepsilon_2.
\]
Assume that
\[
\varepsilon_2-\varepsilon_1\geqslant 4m.
\]
Then, using the monotonicity of \(g_{\varepsilon_1}\), for \(\alpha<1-\varepsilon_2\leqslant 1-\varepsilon_1-4m\)
\begin{align*}
\hinv\circ\hinv\circ g_{\varepsilon_1}
\circ\hup\circ\hup(\alpha)
&\geqslant
\Bigl(g_{\varepsilon_1}(\hup\circ\hup(\alpha))-2m\Bigr)_+ \\
&\geqslant
\Bigl(g_{\varepsilon_1}(\alpha+2m)-2m\Bigr)_+ \\
&=
\bigl(1-\varepsilon_1-\alpha-4m\bigr)_+ \\
&\geqslant 1-\varepsilon_2-\alpha\\
&=g_{\varepsilon_2}(\alpha).
\end{align*}
Thus
\[
g_{\varepsilon_2}(\alpha)
\leqslant
\hinv\circ\hinv\circ g_{\varepsilon_1}
\circ\hup\circ\hup(\alpha),
\qquad \forall \alpha\in[0,1].
\]
This is exactly the separation condition of \Cref{thm:general}.

\noindent Since we assume \(\tau_{n,\delta}\leqslant 1/2\), we have
\[
4m=4(\tau+\sqrt{\tau})\leqslant 8\sqrt{\tau}.
\]
Therefore the separation condition is implied by
\[
\varepsilon_2-\varepsilon_1
\geqslant C\sqrt{\tau_{n,\delta}}
\]
for constant \(C=8\).

Finally, on the alphabet \([k]\), the class \(\cS=2^{[k]}\) contains every
subset of the sample space. Hence every likelihood-ratio level set belongs to
\(\cS\), so every pair of distributions on \([k]\) is \(\cS\)-attainable.
Moreover, \(\VC(\cS)=k\), and therefore the value of \(\tau_{n,\delta}\) is
precisely the one stated in the corollary. Applying
\Cref{thm:general} gives
\[
\errone(\psi_{n,\delta})\leqslant \delta,
\qquad
\errtwo(\cM_{\cS};\psi_{n,\delta})\leqslant \delta.
\]
This proves the corollary.

\subsection{Proof of \Cref{thm:new-lower-bound}}\label{sec:proof-minimax-lower-bound}

The proof of \Cref{thm:new-lower-bound} proceeds by a two-way
reduction.  The first lower bound captures the horizontal uncertainty in
the input coordinate \(\alpha\), including both the interior \(1/\sqrt n\)
scale and the boundary \(1/n\) scale.  The second lower bound captures the
vertical uncertainty in the output coordinate \(\fnull(\alpha)\).

For \(\alpha\in[0,1]\), define
\[
H_n(\alpha)
:=
\fnull(\alpha)
-
\fnull\!\left(
\alpha+\frac1n+\sqrt{\frac{V(\alpha)}{n}}
\right),
\qquad
W_n(\alpha)
:=
\sqrt{\frac{V(\fnull(\alpha))}{n}}\wedge \fnull(\alpha).
\]
With this notation, the local gap in \Cref{thm:new-lower-bound} is
\[
\Delta_{\fnull}(\alpha;1/n)=H_n(\alpha)+W_n(\alpha).
\]

\begin{lemma}[Horizontal local lower bound]\label{prop:lower-horizontal-local-full}
For every \(\delta\in(0,1/2)\), there exists a constant \(c_\delta>0\)
such that if for some \(\alpha\in[0,1]\),
\[
\fnull(\alpha)-\falt(\alpha)
<
c_\delta H_n(\alpha),
\]
then
\[
\inf_{\psi}
\Bigl\{
\errsum{\psi}
\Bigr\}
\geqslant 2\delta.
\]
\end{lemma}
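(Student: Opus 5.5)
The plan is a two-point (Le Cam) argument inside the half-interval model $\cM_{\smlr}$. We construct a null pair $(P_0,Q_0)$ with $\Toff{P_0}{Q_0}=\fnull$ and an alternative pair $(P_1,Q_1)$ with $\Toff{P_1}{Q_1}(\alpha)<\falt(\alpha)$. Both pairs are MLR, hence $\smlr$-attainable. We then show that $\dTV(P_0^n\times Q_0^n,P_1^n\times Q_1^n)\leqslant 1-2\delta$. The standard reduction $\inf_\psi\{\errone+\errtwo\}\geqslant 1-\dTV$ then gives the claim.

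The key idea is to perturb only the \emph{type I} coordinate. Keep $Q$ fixed and move $P$ slightly, so that the point $\alpha$ on the curve is relabeled as $\alpha':=\alpha+r$, where $r:=c\,(1/n+\sqrt{V(\alpha)/n})$ for a small $c=c_\delta$.

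\textbf{Null pair.} Realize $\fnull$ canonically: take $P_0=\mathrm{Unif}[0,1]$, and let $Q_0$ be the distribution on $[0,1]$ with $Q_0([0,t])=1-\fnull(t)$. Convexity of $\fnull$ makes $dQ_0/dP_0=-\fnull'$ non-increasing. Hence the pair is MLR (continuous after smoothing, if needed) and $\Toff{P_0}{Q_0}=\fnull$, with witness sets $[0,t)$.

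\textbf{Alternative pair.} Keep $Q_1=Q_0$ and take $P_1$ to be $P_0$ reweighted on $[0,\alpha']$ versus $(\alpha',1]$. Set the density to $\alpha/\alpha'$ on $[0,\alpha']$ and to $(1-\alpha)/(1-\alpha')$ on $(\alpha',1]$. This gives $P_1([0,\alpha'])=\alpha$ while preserving a monotone likelihood ratio $dQ_1/dP_1$, since the reweighting factor is itself monotone (increasing in $x$). Testing with $[0,\alpha']$ then yields
\[
\Toff{P_1}{Q_1}(\alpha)\leqslant Q_0((\alpha',1])=\fnull(\alpha+r).
\]
By convexity of $\fnull$ (as in the proof of \Cref{prop:upper-rate}), $\fnull(\alpha)-\fnull(\alpha+r)\geqslant c\,H_n(\alpha)$ when $c\leqslant1$; in the other direction one uses the scaling inequality, so the constant is adjusted. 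So if $\fnull(\alpha)-\falt(\alpha)<c\,H_n(\alpha)\leqslant\fnull(\alpha)-\fnull(\alpha+r)$, then $\Toff{P_1}{Q_1}(\alpha)<\falt(\alpha)$ and the pair is in $H_1$.

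\textbf{Bounding the divergence.} Since the $Q$ samples have identical laws, it suffices to bound $\dTV(P_0^n,P_1^n)$. I would use the Hellinger or $\chi^2$ tensorization $\dTV(P_0^n,P_1^n)^2\lesssim n\,H^2(P_0,P_1)$, which applies when $P_1$ is not near-singular. The pair $(P_0,P_1)$ is essentially Bernoulli$(\alpha')$ versus Bernoulli$(\alpha)$ on the event $[0,\alpha']$, so
\[
H^2(P_0,P_1)\asymp \frac{r^2}{V(\alpha)+r}.
\]
With $r=c(1/n+\sqrt{V(\alpha)/n})$ this gives $nH^2\lesssim c$, and choosing $c=c_\delta$ small makes $\dTV\leqslant 1-2\delta$.

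\textbf{Main obstacle: the boundary regime.} When $V(\alpha)\lesssim 1/n$ (for instance $\alpha=0$), the two Bernoulli parameters become $0$ versus $\asymp c/n$, and Hellinger tensorization gives only a constant bound. Here I would instead compute directly: for $\alpha=0$, $P_1$ puts no mass on $[0,r]$, and
\[
\dTV(P_0^n,P_1^n)=1-(1-r)^n\approx 1-e^{-c},
\]
which is at most $1-2\delta$ for small $c$. The symmetric case $\alpha$ near $1$ is handled the same way using $1-\alpha$. Care is also needed for $\alpha+r>1$, where the extension $\fnull=0$ is used. In that case one caps $r$ at $1-\alpha$, which is harmless because then $H_n(\alpha)=\fnull(\alpha)\leqslant 1-\alpha$. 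Finally, the continuity and MLR requirements are handled by small smoothings that do not change the estimates.
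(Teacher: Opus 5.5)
Your proposal is correct in substance, and its core matches the paper's interior case. In both arguments the second coordinate stays fixed at the canonical realization $R$ of $\fnull$ (density $-(\fnull)'_+$ on $[0,1]$). One reweights $\mathrm{Unif}[0,1]$ on the two sides of a single split point, so that a lower-tail test of size $\alpha$ has type~II error at most $\fnull(\alpha+c\,s_n(\alpha))$, where $s_n(\alpha)=1/n+\sqrt{V(\alpha)/n}$. Concavity of $t\mapsto\fnull(\alpha)-\fnull(\alpha+t)$ then gives the comparison with $H_n(\alpha)$, and $\chi^2$ is bounded.

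The real difference is the boundary regime $V(\alpha)<1/n$. The paper splits at $\alpha$ itself, with density $1-\theta/\alpha$ on $[0,\alpha]$, which forces $\theta\leqslant\alpha$. It therefore needs a second, quite different construction there: a $(\gamma,\lambda)$-DP-shaped law $Q$ (density $0$ on $[0,\lambda)$, $e^{\mp\gamma}$ on two pieces, extra mass $\lambda$ on $[1,1+\lambda]$) with $\Toff{Q}{R}=\fnull(1-g(\cdot))$, controlled through the Hellinger affinity $(1-\lambda)/\cosh(\gamma/2)$. Your split at $\alpha'=\alpha+r$, with weights $\alpha/\alpha'$ and $(1-\alpha)/(1-\alpha')$, is a valid density for every $\alpha$. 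Moreover $\chi^2(P_1\|P_0)=r^2/(\alpha'(1-\alpha'))$ has $\alpha'\geqslant r$ in the denominator, so $n\chi^2\lesssim nr\lesssim c$ when $\alpha<1/n$. One construction thus covers both regimes, and the ``main obstacle'' you describe is not one. Your own estimate $H^2\asymp r^2/(V(\alpha)+r)$ already gives $nH^2\lesssim c$ there, and the explicit $\alpha=0$ computation is a special case. Your route is more elementary. The paper's boundary alternative lies below $\fnull$ at all boundary levels simultaneously, which this pointwise lemma does not need.

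Two details need repair.

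\emph{The cap at $1-\alpha$.} Capping $r$ at exactly $1-\alpha$ gives $\alpha'=1$, and then $P_1$ has total mass $\alpha<1$. Take instead $r=c\min\{s_n(\alpha),1-\alpha\}$ with $c<1$. When the cap binds, concavity still gives $\fnull(\alpha)-\fnull(\alpha+r)\geqslant c\,\fnull(\alpha)\geqslant c\,H_n(\alpha)$. The bounds $1-\alpha'\geqslant(1-c)(1-\alpha)$ and $\alpha'\geqslant\max\{\alpha,r\}$ keep $n\chi^2(P_1\|P_0)\lesssim c$, because the cap binds only when $n(1-\alpha)=O(1)$.

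\emph{The atom at $0$.} If $\fnull(0)<1$, your $Q_0$ has an atom at $0$ where $P_0$ has no mass. Then $\{p=0\}$ is $\mu$-a.e.\ equal to $\{0\}$, which is not $\mu$-a.e.\ an open half-line, so $(P_0,Q_0)\notin\mathcal M_{\smlr}$ as written. The correct ``smoothing'' spreads this mass over an interval to the left of $0$, as the paper does with $\mathbf 1_{[\fnull(0)-1,0)}$. This changes neither the trade-off curves nor the divergence bounds.
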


\begin{lemma}[Vertical local lower bound]\label{prop:lower-vertical-local-full}
For every \(\delta\in(0,1/2)\), there exists a constant \(c_\delta>0\)
such that if for some \(\alpha\in[0,1]\),
\[
\fnull(\alpha)-\falt(\alpha)
<
c_\delta W_n(\alpha),
\]
then
\[
\inf_{\psi}
\Bigl\{
\errsum{\psi}
\Bigr\}
\geqslant 2\delta.
\]
\end{lemma}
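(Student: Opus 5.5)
We prove the vertical bound with a Le Cam two-point argument. We keep $P$ fixed and build two MLR pairs $(P,Q_0)$ and $(P,Q_1)$. The null pair has trade-off curve exactly $\fnull$. The alternative pair moves a mass $\varepsilon\asymp c_\delta W_n(\alpha)$ of $Q$ across the Neyman--Pearson threshold at level $\alpha$, and is statistically indistinguishable from the null with $n$ samples. Both pairs lie in $\mathcal M_{\mathrm{MLR}}\subseteq\cM_{\smlr}$, so the resulting bound is a lower bound on $\errsum{\psi}$. We may assume $W_n(\alpha)>0$, i.e.\ $0<\fnull(\alpha)<1$; otherwise the hypothesis $\fnull(\alpha)-\falt(\alpha)<0$ contradicts $\fnull\succeq\falt$.

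\emph{Null construction.} Let $P$ be uniform on $[0,1]$. Define $Q_0$ on $[0,2]$ by $Q_0([0,u])=\fnull(1-u)$ for $u\in[0,1]$, and spread the remaining mass $1-\fnull(0)$ uniformly on $(1,2]$. Since $\fnull$ is continuous, convex and non-increasing, $u\mapsto\fnull(1-u)$ is continuous, convex and non-decreasing on $[0,1]$. Hence $Q_0$ has a non-decreasing density $g$ on $[0,1]$, where the density may vanish on an initial segment or blow up at $u=1$. On $(1,2]$ the likelihood ratio is $+\infty$. So $dQ_0/dP$ is monotone and $(P,Q_0)\in\mathcal M_{\mathrm{MLR}}$.

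The optimal level-$\alpha$ rejection region is the upper half-line $(1-\alpha,\infty)$. Its type~II error is $Q_0([0,1-\alpha])=\fnull(\alpha)$, so $\Toff{P}{Q_0}=\fnull$ and the pair lies in $H_0$.

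\emph{Alternative construction.} Fix $\alpha$, write $f=\fnull(\alpha)$ and $t=1-\alpha$, and let $0<\varepsilon<f\wedge(1-f)$. Define $Q_1$ by rescaling the density of $Q_0$:
\[
dQ_1=\Bigl(1-\tfrac{\varepsilon}{f}\Bigr)dQ_0\ \text{on }[0,t],\qquad dQ_1=\Bigl(1+\tfrac{\varepsilon}{1-f}\Bigr)dQ_0\ \text{on }(t,2].
\]
Then $Q_1$ is a probability measure. The left piece is scaled down and the right piece is scaled up, so the density ratio with respect to $P$ stays non-decreasing and $(P,Q_1)\in\mathcal M_{\mathrm{MLR}}$. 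Using the half-line $(t,\infty)$ as a test gives
\[
\Toff{P}{Q_1}(\alpha)\leqslant Q_1([0,t])=f-\varepsilon .
\]
Choose $\varepsilon=c_\delta W_n(\alpha)$ with $c_\delta<1$, which respects the constraints since $W_n(\alpha)\leqslant f$. Under the hypothesis $f-\falt(\alpha)<c_\delta W_n(\alpha)$ we get $f-\varepsilon<\falt(\alpha)$, so $\Toff{P}{Q_1}\nsucceq\falt$ and the pair lies in $H_1$.

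\emph{Indistinguishability.} The likelihood ratio $dQ_1/dQ_0$ takes two values, so
\[
\chi^2(Q_1\|Q_0)=\frac{\varepsilon^2}{f}+\frac{\varepsilon^2}{1-f}\leqslant\frac{2\varepsilon^2}{V(f)}\leqslant\frac{2c_\delta^2}{n},
\]
using $W_n(\alpha)^2\leqslant V(f)/n$. Tensorizing over the $n$ samples and using that the $P$-samples have the same law under both pairs,
\[
\chi^2\bigl(P^n\times Q_1^n\,\big\|\,P^n\times Q_0^n\bigr)\leqslant e^{2c_\delta^2}-1 .
\]
As in the proof of \Cref{thm:vc-necessary}, the sum of errors is then at least $1-\sqrt{(e^{2c_\delta^2}-1)/2}$. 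Choosing $c_\delta$ small enough makes this at least $2\delta$ for any $\delta<1/2$.

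\emph{Main obstacle.} The delicate step is making the null construction legitimate when $\fnull$ is irregular: flat at zero, with infinite slope at $0$, or with $\fnull(0)<1$. I handle these by defining $Q_0$ through its distribution function rather than a density, and by placing the leftover mass on $(1,2]$ where $P$ has no mass. The check that $\Toff{P}{Q_0}=\fnull$ exactly, and not merely $\succeq$, uses that MLR makes half-lines Neyman--Pearson optimal, together with continuity of $Q_0$ on $[0,1]$ so that no randomization is needed. The rest is routine.
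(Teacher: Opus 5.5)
Your proposal is correct and is essentially the paper's argument. The paper also fixes $P$ uniform on $[0,1]$ and builds a base $R$ with $R((x,\infty))=\fnull(x)$; this is your construction mirrored by $x\mapsto 1-x$, with the leftover mass $1-\fnull(0)$ placed on $[\fnull(0)-1,0)$ instead of $(1,2]$. It then rescales $R$ by two constants across the level-$\alpha$ threshold, lowering the curve at $\alpha$ by $\theta=\kappa_\delta W_n(\alpha)$ with $\chi^2\leqslant 2\theta^2/V(\fnull(\alpha))$. The only other difference is that the paper bounds the total variation directly by $\sqrt{n\chi^2}$ rather than tensorizing $\chi^2$.

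One harmless slip: your side condition $\varepsilon<1-f$ does not follow from $W_n(\alpha)\leqslant f$, and it can fail when $1-f$ is small compared with $c_\delta^2/n$. It is never used, however: only $\varepsilon<f$ is needed for $Q_1$ to be a probability measure with a monotone likelihood ratio.
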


\begin{proof}[Proof of \Cref{thm:new-lower-bound}]
Fix \(\alpha\in[0,1]\) and write
\[
\Delta_0:=\fnull(\alpha)-\falt(\alpha).
\]
Let
\[
H:=H_n(\alpha),
\qquad
W:=W_n(\alpha).
\]
Let \(c_\delta>0\) be the smaller of the constants in
\Cref{prop:lower-horizontal-local-full} and
\Cref{prop:lower-vertical-local-full}, and set
\[
C_\delta:=\frac{c_\delta}{2}.
\]
Assume that
\[
\Delta_0<C_\delta\{H+W\}
=
\frac{c_\delta}{2}(H+W).
\]
Then either
\[
\Delta_0<c_\delta H
\qquad\text{or}\qquad
\Delta_0<c_\delta W.
\]
In the first case, \Cref{prop:lower-horizontal-local-full} applies.
In the second case, \Cref{prop:lower-vertical-local-full} applies.
Therefore, in either case,
\[
\inf_{\psi}
\Bigl\{
\errsum{\psi}
\Bigr\}
\geqslant 2\delta.
\]
This proves the theorem.
\end{proof}

\subsection{Proofs for the Two Local Lower Bounds}

Define \(P\) to be the uniform distribution on \([0,1]\), with density
\[
p(x)=\mathbf 1_{[0,1]}(x),
\]
and define \(R\) to have density
\[
r(x):=
\mathbf 1_{[\,\fnull(0)-1,\;0\,)}(x)
-\bigl(\fnull\bigr)'_+(x)\,\mathbf 1_{[0,1]}(x).
\]

\begin{lemma}\label{lem:base-pair}
The function \(r\) is a probability density on \(\mathbb R\). Moreover:
\begin{enumerate}
\item \(R((x,\infty))=\fnull(x)\) for every \(x\in[0,1]\);
\item the likelihood ratio \(dR/dP\) is non-increasing on \([0,1]\);
\item \((P,R)\in\cM_{\smlr}\) and \(\Toff{P}{R}=\fnull\).
\end{enumerate}
\end{lemma}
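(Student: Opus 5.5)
The plan is to verify the three claims directly from the convexity, monotonicity and boundary properties of $\fnull$. Write $\fnull'_+$ for the right derivative, which exists on $[0,1)$ because $\fnull$ is convex. Since $\fnull$ is non-increasing, $-\fnull'_+\geqslant 0$, so $r\geqslant 0$. To avoid trouble at $x=1$ and with a possibly infinite right derivative at $0$, only integrability and a.e. values matter.

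\textbf{Step 1: $r$ is a density.} The first piece integrates to $1-\fnull(0)$. Convexity makes $\fnull$ absolutely continuous on compact subintervals of $(0,1)$, and $\fnull$ is continuous on $[0,1]$. Hence, letting the endpoints tend to $0$ and $1$ (monotone convergence, since the integrand is nonnegative),
\[
\int_0^1 -\fnull'_+(x)\,dx=\fnull(0)-\fnull(1)=\fnull(0).
\]
Here $\fnull(1)=0$ because $\fnull(1)\leqslant 1-1$. The total mass is therefore $1$. The point mass issue (whether $\fnull'_+(0)=-\infty$) does not arise, since $r$ is only defined a.e.

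\textbf{Claim 1.} For $x\in[0,1]$, the mass of $R$ on $(x,\infty)$ comes only from $[0,1]$, since the first piece lives on $[\fnull(0)-1,0)\subset(-\infty,0)$. The same absolute-continuity argument gives
\[
R((x,\infty))=\int_x^1 -\fnull'_+ = \fnull(x)-\fnull(1)=\fnull(x).
\]

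\textbf{Claim 2.} On $[0,1]$ we have $dR/dP=-\fnull'_+$, and $\fnull'_+$ is non-decreasing by convexity, so $dR/dP$ is non-increasing there.

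\textbf{Claim 3.} Off $[0,1]$, $P$ has no mass while $R$ does (on $[\fnull(0)-1,0)$), so there the likelihood ratio equals $+\infty$.

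For attainability I would check the level sets $\{q>\lambda p\}$ for $\mu=(P+R)/2$. Up to $\mu$-null sets, each such set is $[\fnull(0)-1,0)\cup[0,a)$ for some $a\in[0,1]$, coming from the monotone part; the case $\lambda=\infty$ gives $\{p=0\}$, which is $[\fnull(0)-1,0)$ up to null sets. Since $\mu$ vanishes below $\fnull(0)-1$, all of these equal $(-\infty,a)$ $\mu$-a.e., with $a\in[0,1]$, or $a=0$ for $\lambda=\infty$. These sets lie in $\smlr$. The one subtle point is that flat stretches of $\fnull'_+$ give non-strict ties, but the strict level set is still an initial segment $[0,a)$ because the ratio is monotone.

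For the identity $\Toff{P}{R}=\fnull$, I would compute via the Neyman–Pearson frontier, or via \Cref{prop:reduction}. The sets $(-\infty,a)$ with $a\in[0,1]$ give
\[
P\big((-\infty,a)\big)=a,\qquad R\big([a,\infty)\big)=\fnull(a)
\]
by Claim 1. Since these are the NP sets and $a$ ranges continuously over $[0,1]$, every $\alpha$ is attained without randomization, so $\Toff{P}{R}(\alpha)=\fnull(\alpha)$. Equivalently, the closed convex hull of the points $(a,\fnull(a))$, together with those of other half-lines, has lower boundary equal to $\fnull$ because $\fnull$ is convex. For the other half-lines, $(a,\infty)$ has $P$-mass $1-a$ and $R$-complement mass $1-\fnull(a)\geqslant 1-(1-a)... $; these points lie above the line $\beta=1-\alpha\geqslant\fnull$, so they are harmless.

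The main obstacle is the boundary behavior at $0$: $\fnull(0)<1$ requires the atom-free mass placed on $[\fnull(0)-1,0)$, and the right derivative may be infinite there. I would handle both with the monotone convergence argument above.
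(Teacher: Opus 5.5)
Your proposal is correct and follows the same route as the paper. Both arguments integrate $-(\fnull)'_+$ using $\fnull(1)=0$ to obtain the density and the tail identity, read off monotonicity of $dR/dP$ on $[0,1]$ from the monotone right derivative, and conclude that lower tails are the Neyman--Pearson sets, with $P$-mass $a$ and type~II error $\fnull(a)$. You additionally spell out details the paper leaves implicit: the monotone-convergence treatment of the endpoints, the explicit $\mu$-a.e.\ identification of each level set with a half-line $(-\infty,a)$, and the check that the remaining half-line points lie in the epigraph of $\fnull$.
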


\begin{proof}
Convexity and monotonicity of \(\fnull\) imply that \((\fnull)'_+\) exists
on \((0,1)\), is integrable, and is non-decreasing. Hence
\(-(\fnull)'_+\geqslant0\) on \([0,1]\). Using \(\fnull(1)=0\),
\[
\int_{\mathbb R} r(x)\,dx
=
\bigl(1-\fnull(0)\bigr)
+
\int_0^1 -(\fnull)'_+(x)\,dx
=
\bigl(1-\fnull(0)\bigr)
+
\bigl(\fnull(0)-\fnull(1)\bigr)
=
1.
\]
Thus \(r\) is a probability density.  For every \(x\in[0,1]\),
\[
R((x,\infty))
=
\int_x^1 -(\fnull)'_+(u)\,du
=
\fnull(x),
\]
which proves the first claim.

On \([0,1]\), we have
\[
\frac{dR}{dP}(x)=r(x)=-(\fnull)'_+(x),
\]
which is non-increasing because \((\fnull)'_+\) is non-decreasing.  Thus
the Neyman--Pearson optimal rejection sets are lower tails
\(S_x:=(-\infty,x]\).  Since \(P(S_x)=x\) and
\(R(S_x^\complement)=\fnull(x)\), the trade-off curve is exactly
\(\fnull\).  This proves the second and third claims.
\end{proof}

\subsubsection{Proof of \Cref{prop:lower-horizontal-local-full}}

Let \((P,R)\) be the base pair from \Cref{lem:base-pair}.  Then
\[
(P,R)\in\cM_{\smlr},
\qquad
\Toff{P}{R}=\fnull,
\]
so \((P,R)\) belongs to the null.  Fix \(\alpha\in[0,1]\), and write
\[
s_n(\alpha):=\frac1n+\sqrt{\frac{V(\alpha)}{n}},
\qquad
D_\alpha(t):=\fnull(\alpha)-\fnull(\alpha+t),
\]
where we use the convention \(\fnull(t)=0\) for \(t>1\).  Then
\[
H_n(\alpha)=D_\alpha(s_n(\alpha)).
\]
Since \(\fnull\) is non-increasing and convex, \(D_\alpha\) is
non-decreasing and concave.  If \(H_n(\alpha)=0\), the hypothesis of the
lemma cannot hold, so assume \(H_n(\alpha)>0\).

\paragraph{Interior case.}
First suppose \(V(\alpha)\geqslant 1/n\).  Then \(\alpha\in(0,1)\) and
\[
s_n(\alpha)
=
\frac1n+\sqrt{\frac{V(\alpha)}{n}}
\leqslant
2V(\alpha).
\]
Choose \(\kappa_\delta>0\) such that
\[
\kappa_\delta\leqslant \frac14,
\qquad
4\sqrt 2\,\kappa_\delta\leqslant 1-2\delta,
\]
and set
\[
\theta:=2\kappa_\delta s_n(\alpha).
\]
Then \(\theta\leqslant V(\alpha)\).  Define
\[
p_{\alpha,\theta}(x)
:=
\left(1-\frac{\theta}{\alpha}\right)\mathbf 1_{[0,\alpha]}(x)
+
\left(1+\frac{\theta}{1-\alpha}\right)\mathbf 1_{(\alpha,1]}(x),
\]
and let \(P_{\alpha,\theta}\) be the corresponding probability measure.

Since \(\theta\leqslant V(\alpha)\), \(p_{\alpha,\theta}\geqslant0\) and
\(\int p_{\alpha,\theta}=1\).  Since \(r\) is non-increasing on \([0,1]\)
and \(1/p_{\alpha,\theta}\) jumps downward at \(\alpha\), the likelihood
ratio \(dR/dP_{\alpha,\theta}\) is non-increasing.  Hence
\[
(P_{\alpha,\theta},R)\in\cM_{\smlr},
\]
and lower-tail tests are Neyman--Pearson optimal.

Let \(S_x:=(-\infty,x]\).  The unique \(x_\theta\in(\alpha,1]\) satisfying
\(P_{\alpha,\theta}(S_{x_\theta})=\alpha\) obeys
\[
x_\theta-\alpha
=
\frac{\theta(1-\alpha)}{1-\alpha+\theta}
\geqslant
\frac{\theta}{2}
=
\kappa_\delta s_n(\alpha),
\]
where we used \(\theta\leqslant1-\alpha\).  Therefore
\[
\Toff{P_{\alpha,\theta}}{R}(\alpha)
=
\fnull(x_\theta)
\leqslant
\fnull\bigl(\alpha+\kappa_\delta s_n(\alpha)\bigr).
\]

A direct computation gives
\[
\chi^2(P_{\alpha,\theta}\|P)
=
\theta^2
\left(
\frac1\alpha+\frac1{1-\alpha}
\right)
\leqslant
\frac{2\theta^2}{V(\alpha)}.
\]
Since \(V(\alpha)\geqslant1/n\),
\[
n\frac{s_n(\alpha)^2}{V(\alpha)}
=
n\frac{\left(n^{-1}+\sqrt{V(\alpha)/n}\right)^2}{V(\alpha)}
\leqslant 4.
\]
Thus
\[
\TV(P_{\alpha,\theta}^n,P^n)
\leqslant
\sqrt{n\chi^2(P_{\alpha,\theta}\|P)}
\leqslant
4\sqrt2\,\kappa_\delta
\leqslant
1-2\delta.
\]
By concavity of \(D_\alpha\),
\[
D_\alpha(\kappa_\delta s_n(\alpha))
\geqslant
\kappa_\delta D_\alpha(s_n(\alpha))
=
\kappa_\delta H_n(\alpha).
\]
Therefore, if \(c_\delta\leqslant\kappa_\delta\), the hypothesis
\[
\fnull(\alpha)-\falt(\alpha)<c_\delta H_n(\alpha)
\]
implies
\[
\falt(\alpha)
>
\fnull\bigl(\alpha+\kappa_\delta s_n(\alpha)\bigr).
\]
Hence
\[
\Toff{P_{\alpha,\theta}}{R}(\alpha)<\falt(\alpha),
\]
so \((P_{\alpha,\theta},R)\) belongs to the alternative.  Consequently, for
any test \(\psi\),
\[
\errsum{\psi}
\geqslant
(P^n\times R^n)(\psi)
+
(P_{\alpha,\theta}^n\times R^n)(1-\psi)
\geqslant
1-\TV(P_{\alpha,\theta}^n,P^n)
\geqslant
2\delta.
\]

\paragraph{Boundary case.}
Now suppose \(V(\alpha)<1/n\).  Define
\[
\gamma:=\frac{1-2\delta}{2\sqrt n},
\qquad
\lambda:=\frac{(1-2\delta)^2}{4n},
\]
and
\[
g(t)
=
\max\Bigl\{
0,\,
1-\lambda-e^\gamma t,\,
e^{-\gamma}(1-\lambda-t)
\Bigr\},
\qquad t\in[0,1].
\]
Set
\[
\fstar(t):=\fnull\bigl(1-g(t)\bigr).
\]
Let \(Q\) have density
\[
q(x)
=
e^{-\gamma}\mathbf 1_{[\lambda,\,(e^\gamma+\lambda)/(e^\gamma+1)]}(x)
+
e^\gamma\mathbf 1_{[(e^\gamma+\lambda)/(e^\gamma+1),\,1]}(x)
+
\mathbf 1_{[1,\,1+\lambda]}(x).
\]
Then \(q\geqslant0\) and \(\int q=1\).  Since \(q/p\) is non-decreasing on
the common support and \(dR/dP\) is non-increasing on \([0,1]\), the
likelihood ratio \(dR/dQ\) is non-increasing.  Hence
\[
(Q,R)\in\cM_{\smlr}.
\]
A direct computation of the lower-tail Neyman--Pearson curve gives
\[
\Toff{Q}{R}(t)=\fstar(t),
\qquad t\in[0,1].
\]
The Hellinger affinity between \(P\) and \(Q\) is
\[
\int\sqrt{pq}
=
\frac{1-\lambda}{\cosh(\gamma/2)}.
\]
Thus
\[
H^2(P,Q)
=
1-\frac{1-\lambda}{\cosh(\gamma/2)}
\leqslant
\lambda+\frac{\gamma^2}{2}
\leqslant
\frac{(1-2\delta)^2}{2n}.
\]
By tensorization and \(\TV\leqslant\sqrt2\,H\),
\[
\TV(P^n,Q^n)\leqslant 1-2\delta.
\]
Let
\[
t_{\mathrm{bd}}:=\frac1n\wedge(1-\alpha).
\]
From the definition of \(g\),
\[
1-\alpha-g(\alpha)
=
\min\Bigl\{
1-\alpha,\,
\lambda+(e^\gamma-1)\alpha,\,
(1-e^{-\gamma})(1-\alpha)+e^{-\gamma}\lambda
\Bigr\}.
\]
Since \(\lambda\asymp_\delta n^{-1}\), there exists a constant
\(\rho_\delta>0\) such that
\[
1-\alpha-g(\alpha)\geqslant \rho_\delta t_{\mathrm{bd}}
\qquad
\text{for all }\alpha\in[0,1].
\]
Therefore
\[
\fnull(\alpha)-\fstar(\alpha)
=
D_\alpha\bigl(1-\alpha-g(\alpha)\bigr)
\geqslant
D_\alpha(\rho_\delta t_{\mathrm{bd}})
\geqslant
\rho_\delta D_\alpha(t_{\mathrm{bd}}).
\]

Since \(V(\alpha)<1/n\), if \(\alpha\leqslant1/2\), then
\(t_{\mathrm{bd}}=1/n\) and
\[
s_n(\alpha)
=
\frac1n+\sqrt{\frac{V(\alpha)}{n}}
\leqslant
\frac2n
=
2t_{\mathrm{bd}}.
\]
Hence
\[
H_n(\alpha)
=
D_\alpha(s_n(\alpha))
\leqslant
D_\alpha(2t_{\mathrm{bd}})
\leqslant
2D_\alpha(t_{\mathrm{bd}}).
\]
If \(\alpha>1/2\), then \(t_{\mathrm{bd}}=1-\alpha<1/n\), so
\[
\alpha+s_n(\alpha)>1,
\]
and therefore
\[
H_n(\alpha)=\fnull(\alpha)=D_\alpha(t_{\mathrm{bd}}).
\]
In either case,
\[
\fnull(\alpha)-\fstar(\alpha)
\geqslant
\frac{\rho_\delta}{2}H_n(\alpha).
\]
Choosing \(c_\delta\leqslant\rho_\delta/2\), the hypothesis
\[
\fnull(\alpha)-\falt(\alpha)<c_\delta H_n(\alpha)
\]
implies
\[
\falt(\alpha)>\fstar(\alpha).
\]
Therefore \((Q,R)\) belongs to the alternative.  Hence, for any test
\(\psi\),
\[
\errsum{\psi}
\geqslant
(P^n\times R^n)(\psi)
+
(Q^n\times R^n)(1-\psi)
\geqslant
1-\TV(P^n,Q^n)
\geqslant
2\delta.
\]
Combining the interior and boundary cases proves
\Cref{prop:lower-horizontal-local-full}.

\subsubsection{Proof of \Cref{prop:lower-vertical-local-full}}
Let \((P,R)\) be the base pair from \Cref{lem:base-pair}, and set
\[
a:=\fnull(\alpha)=R((\alpha,\infty)).
\]
If \(a=0\), then \(W_n(\alpha)=0\), and the hypothesis of the lemma cannot
hold.  Hence assume \(a>0\).

Choose \(\kappa_\delta>0\) such that
\[
\sqrt2\,\kappa_\delta\leqslant 1-2\delta,
\]
and set
\[
\theta:=\kappa_\delta W_n(\alpha).
\]
Since \(W_n(\alpha)\leqslant a\), we have \(\theta\leqslant a\).  Let
\[
A_\alpha:=(-\infty,\alpha],
\qquad
B_\alpha:=(\alpha,\infty),
\]
and define
\[
r_{\alpha,\theta}(x)
:=
\left(1+\frac{\theta}{1-a}\right)r(x)\mathbf 1_{A_\alpha}(x)
+
\left(1-\frac{\theta}{a}\right)r(x)\mathbf 1_{B_\alpha}(x).
\]
Let \(R_{\alpha,\theta}\) be the corresponding probability measure.  The
density is non-negative and integrates to one because
\[
R(A_\alpha)=1-a,
\qquad
R(B_\alpha)=a.
\]
Since \(r\) is non-increasing and the multiplicative factor in
\(r_{\alpha,\theta}\) jumps downward at \(\alpha\), the likelihood ratio
\(dR_{\alpha,\theta}/dP\) is non-increasing.  Therefore
\[
(P,R_{\alpha,\theta})\in\cM_{\smlr},
\]
and lower-tail tests are Neyman--Pearson optimal.  Since
\(P(A_\alpha)=\alpha\), the test \(A_\alpha\) has level \(\alpha\), and
\[
\Toff{P}{R_{\alpha,\theta}}(\alpha)
=
R_{\alpha,\theta}(B_\alpha)
=
\left(1-\frac{\theta}{a}\right)a
=
\fnull(\alpha)-\theta.
\]
A direct computation gives
\[
\chi^2(R_{\alpha,\theta}\|R)
=
(1-a)\left(\frac{\theta}{1-a}\right)^2
+
a\left(\frac{\theta}{a}\right)^2
=
\theta^2\left(\frac1{1-a}+\frac1a\right)
\leqslant
\frac{2\theta^2}{V(a)}.
\]
Since \(W_n(\alpha)^2\leqslant V(a)/n\),
\[
\TV(R_{\alpha,\theta}^n,R^n)
\leqslant
\sqrt{n\,\chi^2(R_{\alpha,\theta}\|R)}
\leqslant
\sqrt2\,\kappa_\delta
\leqslant
1-2\delta.
\]
Choose the constant \(c_\delta\) in the statement of the lemma so that
\(c_\delta\leqslant\kappa_\delta\).  If
\[
\fnull(\alpha)-\falt(\alpha)<c_\delta W_n(\alpha),
\]
then
\[
\falt(\alpha)
>
\fnull(\alpha)-\kappa_\delta W_n(\alpha)
=
\fnull(\alpha)-\theta
=
\Toff{P}{R_{\alpha,\theta}}(\alpha).
\]
Thus \((P,R_{\alpha,\theta})\) belongs to the alternative, while
\((P,R)\) belongs to the null.  Therefore, for any test \(\psi\),
\[
\errsum{\psi}
\geqslant
(P^n\times R^n)(\psi)
+
(P^n\times R_{\alpha,\theta}^n)(1-\psi)
\geqslant
1-\TV(R_{\alpha,\theta}^n,R^n)
\geqslant
2\delta.
\]
This proves \Cref{prop:lower-vertical-local-full}.

\section{Proofs for \Cref{sec:misspecification}}\label{app:misspecification}
\subsection{Proof of \Cref{thm:misspecified}}\label{sec:proof-misspecified}
Type~I error control is a straightforward corollary of \Cref{thm:general}, so it suffices to establish type~II. By \Cref{lem:nuc}, with probability at least $1-\delta$, the event
\[
\mathcal E=\Bigl\{\forall S\in\cS\cup\cS^\complement:\ P(S)\leqslant\hup(P_n(S)),\ P_n(S)\leqslant\hup(P(S)),\ Q(S)\leqslant\hup(Q_n(S)),\ Q_n(S)\leqslant\hup(Q(S))\Bigr\}
\]
holds; we work on $\mathcal E$.

\paragraph{Type II error control.}
Assume $\Toff{P}{Q}\nsucceq\falt$, which implies the existence of some $\alpha \in [0,1]$ such that $0 \leqslant \Toff{P}{Q}(\alpha) < \falt(\alpha)$. 
Under the gap condition, we further have 
\begin{align*}
0 \leqslant \Toff{P}{Q}(\alpha)
&<\max \{ \hinv\circ\widetilde{\fnull}\circ\hup(\alpha+2\eta_1)-2\eta_2,0 \},
\end{align*}
Clearly this requires 
$\hinv\circ\widetilde{\fnull}\circ\hup(\alpha+2\eta_1)-2\eta_2 > 0$. As a result, we obtain 
\begin{align*}
0 \leqslant \Toff{P}{Q}(\alpha)
&< \hinv\circ\widetilde{\fnull}\circ\hup(\alpha+2\eta_1)-2\eta_2.
\end{align*}
Now since $(P,Q)$ is $(\cS,\eta_1,\eta_2)$-attainable, we know that for any $\alpha \in [0,1]$
\[
\Toff{P}{Q}(\alpha)\geqslant \Toff{P'}{Q'}(\alpha+\eta_1)-\eta_2.
\]
Consequently, combining the above two displays, we have 
\begin{align*}
\Toff{P'}{Q'}(\alpha + \eta_1)
&<\hinv\circ\widetilde{\fnull}\circ\hup(\alpha+2\eta_1)-\eta_2.
\end{align*}
By \Cref{lem:witness}, there exists $S^\star\in\cS$ with $0\leqslant Q'((S^\star)^\complement)<\hinv\circ\widetilde{\fnull}\circ\hup\bigl(P'(S^\star)+\eta_1\bigr)-\eta_2$. Since $\hup$ and $\hinv$ are non-decreasing and $\fnull$ is non-increasing,  $\hinv\circ\widetilde{\fnull}\circ\hup$ is non-increasing. Combining $\dTV(P,P')\leqslant\eta_1$, $\dTV(Q,Q')\leqslant\eta_2$, we have
\[
Q((S^\star)^\complement)
\leqslant Q'((S^\star)^\complement)+\eta_2
<\hinv\circ\widetilde{\fnull}\circ\hup\bigl(P'(S^\star)+\eta_1\bigr)
\leqslant \hinv\circ\widetilde{\fnull}\circ\hup\bigl(P(S^\star)\bigr)\leqslant \hinv\circ\widetilde{\fnull}(P_n(S^\star)).
\]
By \Cref{lem:hinv-properties}(iii), this is equivalent to $\hup(Q((S^\star)^\complement))<\widetilde{\fnull}(P_n(S^\star))$. On $\mathcal E$ we have $Q_n((S^\star)^\complement)\leqslant\hup(Q((S^\star)^\complement))$, so $Q_n((S^\star)^\complement)<\widetilde{\fnull}(P_n(S^\star))$. \Cref{prop:equiv-surrogate-convex} then shows that the test~\eqref{eq:general-test} rejects on $\mathcal E$, giving $\errtwo(\cM_{\cS, \eta_1,\eta_2};\psi_{n,\delta})\leqslant\delta$.

\subsection{Proof of \Cref{lem:log-concave-approx-attain}}\label{sec:log-concave}

We rely on two lemmas: a standard TV-approximation result and an attainability fact for piecewise-linear densities.

\begin{lemma}[Lemma 27 of \cite{chan2014efficient}, setting \(t = \lceil\frac{1}{2\sqrt{\eta}}\rceil\))] \label{TV-approx}
    For any log-concave density $P$ on $\bbR$, any $\eta>0$, there exists a piecewise linear density $Q$ with at most $\lceil\frac{1}{2\sqrt{\eta}}\rceil$ pieces, such that $\TV(P,Q)\leqslant \widetilde{O}(\eta)$.
\end{lemma}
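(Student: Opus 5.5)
The plan is to build the approximant by hand, taking $t=\lceil \tfrac{1}{2\sqrt\eta}\rceil$ and allowing polylogarithmic losses, which the $\widetilde O(\eta)$ notation absorbs. Let $P$ have density $p=e^{\phi}$ with $\phi$ concave; $p$ is then unimodal with mode $x^\star$. The argument has three steps: truncation, dyadic level decomposition, and piecewise-linear approximation on each level interval.

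\emph{Step 1 (truncation).} A log-concave density has exponential tails: outside the interval where $p\geqslant \eta\,\max p$, the remaining mass is $O(\eta\log(1/\eta))$. This follows from the standard comparison of each monotone side of $p$ with an exponential having the same value and slope at the cut point. So I would set $p\equiv 0$ outside an interval $I=[a,b]$ carrying mass $1-\widetilde O(\eta)$, on which $\eta\max p\leqslant p\leqslant \max p$.

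\emph{Step 2 (dyadic level decomposition).} On each side of $x^\star$, $p$ is monotone. I would split $I$ into the $O(\log(1/\eta))$ maximal intervals $J_\ell$ on which $p\in[2^{-\ell-1}\max p,\,2^{-\ell}\max p]$. On each $J_\ell$, $\phi$ is concave and has oscillation at most $\log 2$, so $p=e^\phi$ is comparable to its mean there. Moreover, $p''=(\phi''+\phi'^2)p$ is controlled by the curvature of $\phi$ together with the square of its slope.

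\emph{Step 3 (piecewise-linear approximation on each $J_\ell$).} I would allot $k_\ell\asymp t/\log(1/\eta)$ pieces to $J_\ell$. The breakpoints would be chosen adaptively so that each piece has both small length and small change in $\phi'$; concretely, I would merge an equal-length grid with an equal-slope-increment grid. The target is the classical bound that linear interpolation of a smooth convex or concave function with total second-derivative budget $D$ has $L^1$ error $O(D\,|J|^2/k^2)$. Here I would apply it piecewise to $p$, splitting further where $p$ changes from convex to concave. Because $\phi$ is concave, $\phi'$ is monotone, and this keeps the number of convex/concave switches per $J_\ell$ bounded. It then remains to show that the relevant budget $\int_{J_\ell}|p''|\,|J_\ell|$ is $O(p(J_\ell))$, using the oscillation bound $\log 2$ and the monotonicity of $\phi'$. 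That yields an $L^1$ error of $O(p(J_\ell)/k_\ell^2)$ on $J_\ell$. Summing over $\ell$ gives $\widetilde O(1/t^2)=\widetilde O(\eta)$.

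Finally, I would clip negative values to zero and renormalize; this at most doubles the TV error. The total number of pieces is $O(t)$, which the constants reduce to the stated count, with the losses again absorbed by the tilde.

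\emph{Main obstacle.} The hard step is Step 3 near the ends of $J_\ell$ (and near the truncation points), where $\phi'$ may be unbounded, for example at a boundary of the support, so the curvature budget is not a priori bounded. I would first try to handle this by reserving one piece at each end where $|\phi'|$ exceeds $\asymp k_\ell/|J_\ell|$. Such an end piece is short, and since $p$ is monotone there, its mass is at most $O(p(J_\ell)/k_\ell)$. This is not small enough on its own, so the end pieces would be refined geometrically, which costs only a further $\log$ factor in the number of pieces. Failing that, I would fall back on the cited Lemma 27 of \cite{chan2014efficient} directly.
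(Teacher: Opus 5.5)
The paper gives no argument here beyond citing Lemma~27 of \cite{chan2014efficient}, so your fallback is exactly its route. Your self-contained construction is a genuinely different route, but Step~3 as written has a genuine gap: it does not deliver an $O(p(J_\ell)/k_\ell^2)$ error. The budget claim is false even after you reserve the end region where $|\phi'|\gtrsim k_\ell/|J_\ell|$. (Dimensionally the claim should read $\int_{J_\ell}|p''|\,|J_\ell|^2\lesssim p(J_\ell)$.) Monotonicity of $\phi'$ and $\operatorname{osc}_{J_\ell}\phi\le\log 2$ only say that $\{|\phi'|\ge A/|J_\ell|\}$ has length at most $|J_\ell|\log 2/A$. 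They do not bound the turning of $\phi'$ on the remainder by $O(1/|J_\ell|)$. Concretely, take $J=[0,1]$ and $\phi(x)=a\log(1-x+1/k)$ with $a=\log 2/\log(k+1)$. This $\phi$ is concave with oscillation exactly $\log 2$ and $\max|\phi'|=ak<k$, so nothing is reserved. Yet $\int_J|\phi''|\asymp ak\asymp k/\log k$ and $\int_J|p''|\asymp ak\,p(J)$, so the interpolation bound gives only $O(p(J)/(k\log k))$. Your merged equal-length/equal-$\phi'$-increment grid is harmonic near $x=1$ in this example, and the resulting interpolant has $L^1$ error of order $p(J)\,k^{-3/2}/\log k$. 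With one such level set carrying constant mass, summing over levels gives $\widetilde O(t^{-3/2})=\widetilde O(\eta^{3/4})$, not $\widetilde O(\eta)$. The obstruction comes from every intermediate slope scale $1/|J_\ell|\ll|\phi'|\ll k_\ell/|J_\ell|$, not only from the ends.

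The repair is to apply your geometric refinement to all of $J_\ell$ rather than to an end piece only. Split $J_\ell$ into $\{x\in J_\ell:|\phi'(x)|<1/|J_\ell|\}$ and the intervals $B_j=\{x\in J_\ell:|\phi'(x)|\in[2^j,2^{j+1})/|J_\ell|\}$ for $0\le j\lesssim 2\log_2 k_\ell$. Each $B_j$ has length at most $2^{-j}|J_\ell|\log 2$. The total variation of $p'=\phi'p$ on $B_j$ is $\lesssim 2^j p(J_\ell)/|J_\ell|^2$, so the budget of $B_j$ is $\lesssim 2^{-j}p(J_\ell)$. With $\asymp k_\ell/\log k_\ell$ equal-length pieces per block, the error is $\widetilde O(p(J_\ell)/k_\ell^2)$ using $O(k_\ell)$ pieces. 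The leftover segment where $|\phi'|\ge k_\ell^2/|J_\ell|$ carries mass $O(p(J_\ell)/k_\ell^2)$.

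Separately, your claim that $p$ switches between convex and concave boundedly often is false. Since $p''=(\phi''+\phi'^2)p$, its sign can change arbitrarily often while $\phi'$ stays monotone, for instance when $\phi'$ is a smoothed staircase. The claim is also unnecessary. Linear interpolation on a piece of length $\ell$ satisfies $\int|p-\text{interp}|\le(\ell^2/8)\,\mathrm{TV}(p')$ with no convexity assumption.
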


\begin{lemma} \label{lem:piecewise-attainable}
    Any pair of piecewise linear distributions on $\bbR$ with at most $t$-pieces is $\cI_{2t-1}$-attainable.
\end{lemma}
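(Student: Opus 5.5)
Let $P$ and $Q$ have piecewise linear densities $p$ and $q$ with at most $t$ pieces each. The plan is to show that every likelihood-ratio level set $\{q>\lambda p\}$ (with $\mu=(P+Q)/2$) agrees $\mu$-a.e.\ with a union of at most $2t-1$ open intervals.

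First, merge the breakpoints. Each density has at most $t-1$ interior breakpoints, so taking the union of both sets of breakpoints gives at most $2t-2$ points. These split $\mathbb R$ into at most $2t-1$ consecutive cells, and on each cell both $p$ and $q$ are affine (or zero outside the supports, which we treat as affine pieces equal to $0$). Points of $\mathbb R$ where both densities vanish form a $\mu$-null set and can be ignored.

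Next, fix $\lambda\in[0,\infty)$ and a cell $I$. On $I$ the function $g_\lambda=q-\lambda p$ is affine, so $\{x\in I: g_\lambda(x)>0\}$ is a sub-interval of $I$: either empty, all of $I$, or a one-sided piece of $I$ cut at the unique root. For $\lambda=\infty$ the set is $\{p=0\}$; on each cell $p$ is affine and nonnegative, so it is either identically zero on the cell or vanishes at most at an endpoint. Hence $\{p=0\}$ equals, up to finitely many points, a union of whole cells. In every case the level set is a union over cells of at most one interval per cell, and endpoints are Lebesgue-null.

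Since $\mu\ll$ Lebesgue, changing finitely many points does not affect the $\mu$-a.e.\ identity. We may therefore replace each sub-interval by its open version $(a_j,b_j)$ and list them in increasing order, with degenerate intervals padding the count up to $2t-1$. This realizes the level set as a member of $\cI_{2t-1}$. Note that $\emptyset$ and $\mathbb R$ also belong to $\cI_{2t-1}$, so the class meets the requirement of Definition~\ref{def:attainability}.

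The main obstacle is bookkeeping rather than an analytic difficulty. We must handle:
\begin{itemize}
\item the unbounded end cells, where piecewise linear densities must vanish to remain integrable;
\item cells on which $p$ or $q$ is zero;
\item the convention at $\lambda=\infty$.
\end{itemize}
The key point is to confirm that adjacent intervals from neighboring cells need not be merged, since the count $2t-1$ already allows one interval per cell.
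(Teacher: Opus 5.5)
Your proposal is correct and follows the same route as the paper: you merge the breakpoints of the two densities into at most $2t-1$ cells, and since $q-\lambda p$ is affine on each cell, its positive set contributes at most one interval per cell. Your version is somewhat more careful than the paper's, since it also treats the case $\lambda=\infty$, the $\mu$-null set where both densities vanish, and the passage to open intervals, and it counts the breakpoints correctly ($2t-2$ interior points) where the paper states an overcount of $2t+2$ pivots.
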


\begin{proof}
Let $f,g$ be piecewise linear densities each with at most $t$ pieces, and for $\lambda\geqslant 0$ set $h_\lambda(x):=f(x)-\lambda g(x)$; the level set of interest is $\{x:f(x)>\lambda g(x)\}=\{h_\lambda>0\}$. The $t$ linear pieces of $f$ are delimited by $t+1$ pivots (including $\pm\infty$), and similarly for $g$; merging gives at most $2t+2$ pivots, between consecutive ones both $f$ and $g$ are linear and so is $h_\lambda$, so $h_\lambda$ is piecewise linear with at most $2t-1$ pieces. On each such piece $\{h_\lambda>0\}$ contributes at most one interval; the conclusion follows.
\end{proof}

The above two technical lemmas imply that
\[
\cM_{\mathrm{LC}}
=
\bigl\{(P,Q):\; P,Q \text{ are log-concave on } \bbR\bigr\}
\subseteq
\cM_{\cI_{\lceil 1/\sqrt{\eta}\rceil},
\widetilde{O}(\eta),\widetilde{O}(\eta)} .
\]


\subsection{Proof of \Cref{cor:log-concave-tolerant}}\label{sec:proof-cor-log-concave}
Take
\[
\eta := n^{-2/5},
\qquad
K := \lceil\frac{1}{\sqrt{\eta}}\rceil=\lceil n^{1/5}\rceil,
\qquad
\cS:=\cI_K .
\]
By \Cref{lem:log-concave-approx-attain}, we have
\[
\cM_{\mathrm{LC}} \subseteq \cM_{\cS,\widetilde{\eta},\widetilde{\eta}}.
\]
for some \(\widetilde{\eta}=\widetilde{O}({\eta})\).

\noindent Since \(\VC(\cI_K)=2K\), it follows that
\[
\tauND
=
\frac{4(\VC(\cS)\log(2n)+\log(32/\delta))}{n}
=
O\!\left(n^{-4/5}\log n\right).
\]
The type~I error bound \(\errone(\psi_{n,\delta})\leqslant \delta\) follows directly from \Cref{thm:misspecified}(i).

\noindent For the type~II error, write
\[
\fnull=g_{\varepsilon_1},
\qquad
\falt=g_{\varepsilon_2},
\qquad
g_\varepsilon(\alpha)=(1-\varepsilon-\alpha)_+.
\]
By \Cref{thm:misspecified}(ii), it suffices to verify the separation condition
\[
\forall\, \alpha \in [0,1]:\qquad g_{\varepsilon_2}(\alpha) \;\leqslant\; \max\big\{\hinvinv\circ g_{\varepsilon_1}\circ \hupup(\alpha + 2\widetilde{\eta})-2\widetilde{\eta},\;0\big\}.
\]
That is,
\begin{align}\label{eq:tv-log-concave-sufficient-1}
    \forall\, \alpha \in [0,1]:\qquad g_{\varepsilon_2}(\alpha) \;\leqslant\; \max\big\{g_{\varepsilon_1}(\alpha+2\widetilde{\eta})-\Delta(\alpha+2\widetilde{\eta})-2\widetilde{\eta},\;0\big\},
\end{align}
where
\begin{align}\label{eq:tv-log-concave-delta}
\Delta(\alpha)&:=g_{\varepsilon_1}(\alpha)-\hinvinv\circ g_{\varepsilon_1}\circ \hupup(\alpha)\nonumber\\
&\leqslant
C \Bigl[
g_{\varepsilon_1}(\alpha) - g_{\varepsilon_1}\!\left(\alpha + C \bigl(\mgn{\alpha}\bigr)\right)
+ \sqrt{\tau\,V(g_{\varepsilon_1}(\alpha))} \wedge g_{\varepsilon_1}(\alpha)
\Bigr],
\end{align}
by \eqref{eq:gap-upper-rate-proof}.

\noindent For the first term on the right-hand side of \eqref{eq:tv-log-concave-delta}, note that \(g_{\varepsilon_1}(\alpha)=(1-\varepsilon_1-\alpha)_+\) is non-increasing and \(1\)-Lipschitz. Hence,
\[
g_{\varepsilon_1}(\alpha)-g_{\varepsilon_1}\!\left(\alpha+C\bigl(\tauND+\sqrt{\tauND\,V(\alpha)}\bigr)\right)
\leqslant
C\bigl(\tauND+\sqrt{\tauND\,V(\alpha)}\bigr).
\]
Since \(V(\alpha)\leqslant 1\), this implies
\[
g_{\varepsilon_1}(\alpha)-g_{\varepsilon_1}\!\left(\alpha+C\bigl(\tauND+\sqrt{\tauND\,V(\alpha)}\bigr)\right)
\lesssim
\bigl(\tauND+\sqrt{\tauND}\bigr).
\]
For the second term on the right-hand side of \eqref{eq:tv-log-concave-delta}, using that \(V(g_{\varepsilon_1}(\alpha))\leqslant 1\), we have
\[
\Bigl(\sqrt{\tauND\,V(g_{\varepsilon_1}(\alpha))}\Bigr)\wedge g_{\varepsilon_1}(\alpha)
\leqslant
\sqrt{\tauND}.
\]
Therefore, for any \(\alpha\in [0,1]\),
\[
\Delta(\alpha+2\widetilde{\eta})\lesssim \tauND+\sqrt{\tauND} \asymp n^{-2/5}.
\]
Combining this with \(g_{\varepsilon_1}(\alpha+2\widetilde{\eta})\geqslant g_{\varepsilon_1}(\alpha)-2\widetilde{\eta}\) and \(\widetilde{\eta}\asymp n^{-2/5}\), a sufficient condition for \eqref{eq:tv-log-concave-sufficient-1} is
\begin{align*}
    \forall\, \alpha \in [0,1]:\qquad g_{\varepsilon_2}(\alpha) \;\leqslant\; \max\big\{g_{\varepsilon_1}(\alpha)-\widetilde{O}(n^{-2/5}),\;0\big\}.
\end{align*}
This condition is satisfied whenever \(\varepsilon_2-\varepsilon_1\geqslant \widetilde{O}(n^{-2/5})\).

\noindent This completes the proof.

\section{Proof of~\Cref{prop:computation-main}}\label{sec:proof-computation}

\subsection{Step 1: a surrogate benchmark}\label{sec:surrogate}

At the population level, the null is violated whenever there exists a set
\(S\in\cS\) such that
\[
Q(S^\complement)<\fnull(P(S)).
\]
The empirical test in~\eqref{eq:test-comp-start} has the same form, but with
confidence corrections applied to both arguments. The first step is to absorb
those corrections into a modified benchmark curve.

The next proposition shows that the rejection rule can be written in the same
geometric form as the population benchmark test, but with \(\fnull\) replaced
by \(\widetilde \fnull\).

\begin{proposition}\label{prop:equiv-surrogate-convex}
The test~\eqref{eq:test-comp-start} rejects if and only if
\begin{equation}\label{eq:test-surrogate}
\exists\, S \in \cS:\quad
Q_n(S^\complement) \;<\; \widetilde{f}_0\bigl(P_n(S)\bigr).
\end{equation}
Moreover, \(\widetilde{\fnull}\) is convex and non-increasing.
\end{proposition}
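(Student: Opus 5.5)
The plan is to prove the two claims separately. Both are essentially direct consequences of \Cref{lem:hinv-properties} and \Cref{lem:tilde-f}.

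\emph{Equivalence of rejection rules.} Fix $S\in\cS$. Set $s:=Q_n(S^\complement)\in[0,1]$ and $t:=\fnull\bigl(\hup(P_n(S))\bigr)$. Since $\fnull$ takes values in $[0,1]$, with the convention $\fnull=0$ beyond $1$ covering any argument $\hup(\cdot)$ that might exceed $1$, we have $t\in[0,1]$. \Cref{lem:hinv-properties}(iii) gives $\hup(s)<t$ if and only if $s<\hinv(t)$. By definition $\hinv(t)=\hinv\circ\fnull\circ\hup(P_n(S))=\widetilde\fnull(P_n(S))$, so the per-set inequality in \eqref{eq:test-comp-start} holds exactly when $Q_n(S^\complement)<\widetilde\fnull(P_n(S))$. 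Because the equivalence holds for every individual $S$, taking the existential quantifier over $S\in\cS$ on both sides shows that \eqref{eq:test-comp-start} and \eqref{eq:test-surrogate} define the same rejection event.

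\emph{Shape of $\widetilde\fnull$.} This is exactly \Cref{lem:tilde-f}. That lemma rests on three facts. First, $\hup$ is concave and non-decreasing (concavity was verified at the start of the proof of \Cref{lem:hinv-properties}, and the truncation $\min\{\cdot,1\}$ preserves concavity). Second, $\fnull$ is convex and non-increasing. Third, $\hinv$ is convex and non-decreasing by \Cref{lem:hinv-properties}(ii). Composing these in the displayed chain of inequalities gives convexity, and composing the monotonicity properties gives that $\widetilde\fnull$ is non-increasing.

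The only delicate point is making sure the hypotheses of \Cref{lem:hinv-properties}(iii) are met, that is, that both $s$ and $t$ lie in $[0,1]$. For $t$ this is where the extension $\fnull(\alpha)=0$ for $\alpha>1$ and the cap of $\hup$ at $1$ are needed. Everything else is bookkeeping.
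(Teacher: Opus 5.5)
Your proposal is correct and is essentially the paper's proof. The paper obtains the equivalence by applying \Cref{lem:hinv-properties}(iii) set by set with $s=Q_n(S^\complement)$ and $t=\fnull(\hup(P_n(S)))\in[0,1]$, and the shape claim is exactly \Cref{lem:tilde-f}; since $\hup$ is capped at $1$, $\fnull$ is only evaluated on $[0,1]$, so the extension $\fnull=0$ beyond $1$ is not actually needed.
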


\noindent See \Cref{lem:hinv-properties}(iii) for the proof of the first
claim, and \Cref{lem:tilde-f} for the proof of the second.
\medskip

This reformulation is conceptually useful: instead of comparing
confidence-corrected empirical quantities, we compare the raw empirical pair
\[
\bigl(P_n(S),\,Q_n(S^\complement)\bigr)
\]
to a modified benchmark curve \(\widetilde \fnull\) that already incorporates
the finite-sample correction. For the algorithmic reduction below, only the
convexity of \(\widetilde \fnull\) matters.

\subsection{Step 2: reduction to a piecewise-linear boundary}\label{sec:discretization}

Since \(P_n(S) \in \{0, 1/n, \ldots, 1\}\), the test~\eqref{eq:test-surrogate}
evaluates \(\widetilde{f}_0\) only on the \(n+1\) grid points
\(\{k/n\}_{k=0}^n\). Let \(g\) denote the piecewise-linear interpolation of
\(\widetilde{f}_0\) with these grid points as breakpoints, that is,
\begin{align}\label{eq:piecewise_linear_interpolation}
g(t)=\begin{dcases}
\widetilde{\fnull}(t),\ &t=0,\frac{1}{n},\frac{2}{n},\cdots,1,\\
\widetilde{\fnull}\left(\frac{k-1}{n}\right)
+n\left[\widetilde{\fnull}\left(\frac{k}{n}\right)-\widetilde{\fnull}\left(\frac{k-1}{n}\right)\right]\left(t-\frac{k-1}{n}\right),
\ &\frac{k-1}{n}<t< \frac{k}{n}.
\end{dcases}
\end{align}
By \Cref{prop:equiv-surrogate-convex}, \(\widetilde{f}_0\) is
convex, so the discrete second differences are nonnegative, which implies that
\(g\) is also convex. Since the test only queries grid points, we obtain the
following equivalence.

\begin{proposition}\label{prop:reduction_to_piecewise_linear}
The test~\eqref{eq:test-surrogate} is equivalent to
\begin{equation}\label{eq:test-pwl}
\exists\, S \in \cS:\quad
Q_n(S^\complement) \;<\; g\bigl(P_n(S)\bigr).
\end{equation}
\end{proposition}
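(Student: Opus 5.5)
The claim is that the rule \eqref{eq:test-surrogate} and the rule \eqref{eq:test-pwl} accept exactly the same data. I will prove this directly, by showing that for every $S\in\cS$ the two inequalities $Q_n(S^\complement)<\widetilde{f}_0(P_n(S))$ and $Q_n(S^\complement)<g(P_n(S))$ are logically identical. The single fact needed is that $g$ and $\widetilde{f}_0$ agree at every point of the grid $\{0,1/n,\dots,1\}$, together with the observation that $P_n(S)$ always lands on this grid.

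First, $P_n(S)=n^{-1}\sum_{i=1}^n\mathbf 1\{X_i\in S\}$. Hence $P_n(S)=j/n$ for some integer $j\in\{0,1,\dots,n\}$, for every $S\in\cS$ and every realization of the data.

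Second, the first branch of the definition \eqref{eq:piecewise_linear_interpolation} sets $g(j/n)=\widetilde{f}_0(j/n)$ at all grid points. The second branch only covers open intervals $((k-1)/n,k/n)$, so no grid point is assigned two values. To be safe, I will also check consistency at the endpoints. Substituting $t=(k-1)/n$ into the affine formula returns $\widetilde{f}_0((k-1)/n)$, and substituting $t=k/n$ returns $\widetilde{f}_0(k/n)$. So $g$ is continuous and agrees with $\widetilde{f}_0$ on the whole grid.

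Putting these together, $g(P_n(S))=\widetilde{f}_0(P_n(S))$ for every $S\in\cS$. The per-set inequalities therefore coincide, and so do the existential statements over $S\in\cS$, which is the claimed equivalence. Convexity of $g$ plays no role in this step; it matters only for the subsequent decomposition into affine pieces $c_k+\lambda_k t$.

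The argument has no real obstacle. The only point needing care is the well-definedness of $g$ at the grid points and the possible domain issue $\widetilde{f}_0:[0,\infty)\to[0,1]$. Neither causes trouble, because both rules evaluate their benchmark only at arguments in $\{j/n\}_{j=0}^n\subset[0,1]$.
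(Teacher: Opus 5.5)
Your proof is correct and is essentially the paper's argument: the paper also justifies the equivalence only by noting that $P_n(S)$ always lies on the grid $\{0,1/n,\dots,1\}$ and that $g$ coincides with $\widetilde{f}_0$ at those points. Your extra check that the affine branches reproduce the grid values at the interval endpoints, and your remark that convexity of $g$ is not used here, are correct refinements.
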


Thus the computational problem reduces to checking whether the empirical point
corresponding to some set \(S\) falls below a convex piecewise-linear boundary.

\subsection{Step 3: reduction to weighted ERM}\label{sec:reduction-cover}

The final simplification comes from the convexity of the piecewise-linear
function \(g\). For each \(k=1,2,\cdots,n\), let
\[
g_k(t)=c_k+\lambda_k t
\]
be the linear piece of \(g\) on \([(k-1)/n,k/n]\).

Since \(g\) is convex, we have
\[
g(t)=\max_{k\in[n]} g_k(t).
\]
Therefore,
\[
Q_n(S^\complement)<g(P_n(S))
\]
holds if and only if there exists some \(k\in\{1,\dots,n\}\) such that
\[
Q_n(S^\complement)-\lambda_k P_n(S)<c_k.
\]
This yields the main computational reduction.

\end{document}